\documentclass[11pt]{scrartcl}

\usepackage[utf8]{inputenc}

\usepackage{config}
\usepackage{titling}

\usepackage[a4paper, top=3cm, bottom=2cm, left=2.5cm, right=2.5cm, includefoot]{geometry}

\usepackage[inline]{enumitem}

\setlist[itemize]{topsep=0pt,partopsep=0pt,itemsep=0pt,parsep=0pt}
\setlist[itemize,1]{label={\small\textbullet}}
\setlist[itemize,2]{label={\tiny\textbullet}}
\setlist[itemize,3]{label=$\cdot$}
\setlist[enumerate]{topsep=0pt,partopsep=0pt,itemsep=0pt,parsep=0pt}
\setlist[enumerate,1]{label=\roman*)}
\setlist[enumerate,2]{label=\alph*)}
\setlist[enumerate,3]{label=\arabic*)}

\hypersetup{
	colorlinks=true,
	linkcolor=AO!65!black,
	citecolor=AO!65!black,
	urlcolor=AppleGreen!65!black,
	bookmarksopen=true,
	bookmarksnumbered,
	bookmarksopenlevel=2,
	bookmarksdepth=3
      }

\title{On non-planar, cycle-conformal graphs}
\predate{}
\date{}
\postdate{}

\preauthor{}
\DeclareRobustCommand{\authorthing}{
	\begin{center}
		Maximilian Gorsky\thanks{Supported by the Institute for Basic Science (IBS-R029-C1).}~~\!\footnote{\href{mailto:m.gorsky@pm.me}{m.gorsky@pm.me}}\\
		Discrete Mathematics Group, Institute for Basic Science (IBS), Daejeon, South Korea

        \medskip

            Clemens Kuske\footnote{\href{mailto:mail@clemens-kuske.de}{mail@clemens-kuske.de}}\\
		Technische Universität Berlin, Berlin, German
\end{center}}
\author{\authorthing}
\postauthor{}

\begin{document}
\maketitle

\begin{abstract}
A graph $G$ is called \emph{matching covered} if all of its edges are contained in some perfect matching of $G$.
Furthermore, a cycle $C \subseteq G$ is called \emph{conformal} if $G - V(C)$ has a perfect matching and $G$ itself is called \emph{cycle-conformal} if all of its even cycles are conformal.
Both matching covered graphs and conformal cycles play central roles in matching theory.

After a string of results from various authors, focused mainly on bipartite, planar graphs and claw-free graphs, a complete characterisation of all planar, cycle-conformal graphs has recently been presented by Dalwadi, Pause, Diwan, and Kothari [DMTCS, 2025].
We continue this exploration further into the realm of non-planar graphs, giving a characterisation of matching covered, cycle-conformal graphs that are bipartite and cubic, and respectively, those that are bipartite and Pfaffian.
The last class plays a fundamental role in matching theory, having important connections to the problem of counting perfect matchings, recognising graphs with even directed cycles, and computing the permanent of certain matrices efficiently.

To prove our results, we break matching covered graphs down to their building blocks, the bipartite ones of which are called \emph{braces}.
The key to both characterisations are theorems that identify the braces in the respective classes.
In particular, as our main results, we show that the cycle of length 4 is the only Pfaffian, cycle-conformal brace and we show that $K_{3,3}$ is the only cubic, cycle-conformal brace.
In both cases these theorems facilitate the characterisations of the much richer classes of associated matching covered graphs.
We conjecture that for each integer $\ell \geq 2$ the only $\ell$-regular, cycle-conformal brace is $K_{\ell,\ell}$.
\end{abstract}

\thispagestyle{empty}

\newpage

\setcounter{page}{1}

\section{Introduction}\label{sec:intro}
A core part of structural matching theory is the study of cycles whose deletion leaves us with a graph with a perfect matching.
Such cycles are called \emph{conformal}.
These cycles, and more generally subgraphs with this property, play a key role in structural matching theory~\cite{GiannopoulouW2021Two,GiannopoulouTW2023Excluding,GiannopoulouW2024Flat,GiannopoulouKW2024Excluding}, ear-decompositions (see~\cite{Szigeti1998Two,LovaszP2009Matching}), the resolution of Pólya's Permanent Problem\footnote{Pólya asked for the conditions under which, given a 0-1-matrix $A$, one could flip some of the 1s in $A$ to $-1$s to obtain the matrix $B$ such that the permanent of $A$ equals the determinant of $B$. This problem has numerous surprising connections to areas outside of linear algebra (see~\cite{RobertsonST1999Permanents,McCuaig2004Polyas}).}~\cite{RobertsonST1999Permanents,McCuaig2004Polyas}, and extensions of these results~\cite{FischerL2001Characterisation,NorineT2007Generating,GorskyKKW2024Packing,Gorsky2024Structure}.

Arising naturally from the significance of this property to the matching structure of graphs, one may therefore wonder which graphs have the property that \textsl{all} of their cycles are conformal.
However, any graph with a perfect matching that has odd cycles cannot have this property.
Thus, as it is natural in this context to study graphs with perfect matchings, we concentrate on the case in which we ask for all even cycles to be conformal.
We call a graph \emph{cycle-conformal} if it has a perfect matching and all of its even cycles are conformal.\footnote{This property has been studied under several names. We discuss our naming choice at the end of the introduction.}

Within the class of cycle-conformal graphs, we may further restrict ourselves to the study of \emph{matching covered} graphs, that is graphs with at least four vertices in which all edges are contained in a perfect matching of the graph.
These graphs are another central feature of matching theory.
We justify this restriction later on by showing that the characterisation of cycle-conformal graphs can easily be reduced to matching covered graphs (see \zcref{lem:reducetomatcov}).

The class of matching covered, cycle-conformal graphs first popped up explicitly when Guo and Zhang gave a characterisation of the planar, bipartite graphs in this class~\cite{GuoZ2004Reducible}, a result which has been replicated independently several times~\cite{ZhangL2012Computing,Pause2022Planar,Kuske2024Characterization} (see also~\cite{KlavzarS2012Characterization}).
Concerning matching covered, cycle-conformal, claw-free graphs, a characterisation of the planar graphs in this class was first given by Peng and Wang~\cite{PengW2019Ear} and a full characterisation was achieved by Zhang, Wang, Yuan, Ng, and Cheng~\cite{ZhangWYNC2022Cyclenice}.
Most recently Dalwadi, Pause, Diwan, and Kothari significantly advanced the topic by providing a full characterisation of planar, matching covered, cycle-conformal graphs~\cite{DalwadiPDK2025Planar}.

In general, the characterisations mentioned above are a bit too involved to state explicitly.
Once we restrict ourselves to the building blocks that make up matching covered graphs, there are however some notable patterns to observe.
Here two classes of graphs emerge from the \emph{tight cut decomposition procedure} due to Lov\'asz~\cite{Lovasz1987Matching}, introduced in detail later on in this introduction, that is of central importance in matching theory.
The graphs in the first class are called \emph{bricks}, which are nonbipartite, 3-connected graphs in which the deletion of any pair of vertices yields a graph with a perfect matching.
The second class consists of \emph{braces}, which are 2-connected, bipartite graphs in which any matching of size at most two can be extended to a perfect matching of the graph.
Thus the cycle of length 4, denoted as $C_4$, is a brace.

\paragraph{Our results.}
We first note the following result, which we deem to be folklore as it immediately follows from the characterisation in~\cite{GuoZ2004Reducible}, but is first mentioned explicitly in~\cite{DalwadiPDK2025Planar}.

\begin{proposition}[folklore]\label{thm:c4uniqueplanar}
    $C_4$ is the unique planar, cycle-conformal brace.\footnote{\cite{DalwadiPDK2025Planar} also consider $K_2$ to be a cycle-conformal brace due to a difference in their definition of what constitutes a brace. In fact, the only effect of this difference is whether or not $K_2$ is considered a brace.}
\end{proposition}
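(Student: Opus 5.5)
The plan is a direct structural argument rather than an appeal to the full characterisation of Guo and Zhang. Let $B$ be a planar, cycle-conformal brace other than $C_4$. A brace with more than four vertices is $3$-connected and has minimum degree at least $3$, so $B$ has an essentially unique plane embedding. In this embedding every face is bounded by a cycle, and since $B$ is bipartite every facial cycle is even. I will find an even cycle $C$ for which $B - V(C)$ has no perfect matching, contradicting cycle-conformality.

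First I look for a vertex whose deletion leaves an unbalanced component. Since $B$ is planar and bipartite, $|E(B)| \leq 2|V(B)| - 4$, so the average degree is below $4$ and some vertex $v$ has degree exactly $3$. Let its neighbours be $u_1,u_2,u_3$, and let $F_{12},F_{23},F_{31}$ be the three faces around $v$, where $F_{ij}$ contains the edges $vu_i$ and $vu_j$. Take $C$ to be the cycle obtained from the symmetric difference of the boundaries of $F_{12}$ and $F_{23}$. Equivalently, $C$ is the boundary of the region $F_{12}\cup F_{23}$ with the edge $vu_2$ removed. This cycle is even, passes through $v$, $u_1$ and $u_3$, and leaves $u_2$ either inside the region or on $C$.

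Next I aim to choose the faces so that $u_2$ does not lie on $C$. Then $u_2$ lies strictly inside the closed curve $C$, all of its neighbours other than $v$ are on $C$ or inside, and the interior part of $B - V(C)$ is separated from the exterior part. A parity count should then show that the component of $B - V(C)$ containing $u_2$ has more vertices from one colour class than the other, so no perfect matching exists. The crude version is this: if the interior of $C$ contains only $u_2$, then $u_2$ is an isolated vertex of $B - V(C)$. The general version uses the fact that interior vertices can only be matched among themselves, and the side counts are unbalanced by one.

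The main obstacle is ensuring the interior is unbalanced in general. Otherwise I need an alternative choice of $C$, for instance the boundary of a union of faces around $v$ that isolates exactly the vertex $u_2$, or a minimal "separating" even cycle found via Euler's formula and face-degree counting. My first attempt would pick $C$ as the outer boundary of the union of all faces incident to $v$ except one, and then delete $v$ from consideration. That cycle surrounds $v$ together with some vertices of one colour, and these give the unbalance. If this fails, I would fall back on citing the Guo and Zhang characterisation~\cite{GuoZ2004Reducible} and checking that every brace it produces is $C_4$.
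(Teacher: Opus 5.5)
There is a genuine gap: the cycle you construct never gives an obstruction. In a $3$-connected plane graph two distinct faces meet in at most one edge, so the boundaries of $F_{12}$ and $F_{23}$ share exactly the edge $vu_2$. Their symmetric difference is therefore the union of the two $v$--$u_2$ paths obtained by deleting $vu_2$ from each facial cycle. So $u_2$ always lies on $C$, as the common endpoint of those two paths, and you cannot choose the faces to avoid this. Moreover, $C$ bounds the region $F_{12}\cup F_{23}$ with the chord $vu_2$ inside, and this region contains no vertex at all. There is no interior component to be unbalanced, and $B - V(C)$ can have a perfect matching. For example, the cube $Q_3$ is a planar cubic brace, and two adjacent faces give a $6$-cycle whose deletion leaves a single edge. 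Your ``parity count'' showing the inside is unbalanced by one is asserted with no mechanism behind it. The alternative ``union of all faces at $v$ except one'' is the same construction up to relabelling, since its boundary again passes through $v$.

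The missing idea is to take \emph{all} faces around a single vertex. Since $B$ is $3$-connected, $B - v$ is $2$-connected, so the face of $B - v$ in which $v$ was drawn is bounded by a cycle $C'$. This cycle is even because $B$ is bipartite, it avoids $v$, and it contains every neighbour of $v$. Hence $v$ is isolated in $B - V(C')$, and $C'$ is not conformal. This needs neither a degree-$3$ vertex nor any counting, and it is the right version of your ``isolate a vertex'' idea: the enclosing cycle must come from the faces around the vertex you want to isolate.

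The paper uses a different, parity-based argument. Take two vertices of the same colour and three internally disjoint paths between them (Menger); these form a bisubdivision of $K_{2,3}$ with an odd number of vertices. So one of its three regions contains an odd number of vertices and is separated from the rest by an even cycle $P_i \cup P_j$. Either argument closes the proof. Falling back on Guo and Zhang's characterisation would also work, as the paper remarks, but then you are no longer giving the direct argument you set out to give.
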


We will provide our own proof of this result later on (see \zcref{lem:planarbraces}).
The argument for this is fairly simple, making use of some very convenient topological properties of the planar setting (as do all other proofs of this that we are aware of).
Clearly, once we leave the realm of planar graphs, we can no longer directly rely on these methods.

Our first main contribution is a generalisation of this result to \emph{Pfaffian graphs}, whose definition we postpone to \zcref{sec:prelim}.
Bipartite Pfaffian graphs are the central class that needed to be characterised to resolve Pólya's Permanent Problem~\cite{RobertsonST1999Permanents,McCuaig2004Polyas} and they are notable for allowing the efficient computation of their number of perfect matchings, a problem which is otherwise known to be $\SharpP$\footnote{A complexity class for counting problems that is associated with the set of decision problems in $\NP$.}-complete~\cite{Valiant1979Complexity}.
Pfaffian graphs thus form an important fundamental class in matching theory.
Kasteleyn laid the groundwork for the study of these graphs by showing that all planar graphs are Pfaffian~\cite{Kasteleyn1963Dimer}.
Thus \zcref{thm:c4uniqueplanar} is properly generalised by our first main result.

\begin{theorem}\label{thm:c4uniquepfaffian}
    $C_4$ is the unique Pfaffian, cycle-conformal brace.
\end{theorem}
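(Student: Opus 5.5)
We would argue via McCuaig's and Robertson--Seymour--Thomas's structure theorem for Pfaffian braces: every Pfaffian brace is either the Heawood graph, or planar, or can be obtained from smaller Pfaffian braces by the \emph{trisum} operation (4-sum at a 4-cycle, possibly deleting some of the edges of that 4-cycle). By \zcref{thm:c4uniqueplanar}, any planar cycle-conformal brace is $C_4$. So it suffices to show two things: the Heawood graph is not cycle-conformal, and no brace that is a proper trisum is cycle-conformal.

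For the Heawood graph we plan a direct exhibition. We would look for an even cycle $C$, for instance of length $6$ or $8$, such that $G-V(C)$ has no perfect matching. Since $G-V(C)$ is bipartite and balanced, a Hall violator suffices, for example an isolated vertex. We will try a cycle $C$ through all three neighbours of some vertex $v$ but not through $v$ itself. Such a cycle exists: take two neighbours joined by a path of length $4$ avoiding $v$, then extend it to the third neighbour. Then $v$ is isolated in $G-V(C)$, and $C$ is not conformal.

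For a trisum $G$ of braces $G_1,G_2,G_3$ along a 4-cycle $Q=abcd$, each $G_i$ has more than four vertices, so each $G_i-V(Q)$ is nonempty. We want an even cycle $C$ in $G$ that uses vertices of two of the sides and leaves an imbalanced bipartite remainder on one side. The key tool is that $V(Q)$ separates the parts, with $a,c$ in one colour class and $b,d$ in the other. For any perfect matching $M$ of $G-V(C)$, each part $G_i-V(Q)$ that is fully disjoint from $C$ is matched internally except through the vertices of $Q$ not on $C$. So we try to choose $C$ that passes through exactly one colour class of $Q$, for instance through $a$ and $c$ but not $b,d$, and routes through side $G_1$. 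Then the untouched side $G_2-V(Q)$, being balanced after removing $Q$, must send the matching into $b$ and $d$ in a balanced way.

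The counting argument we would carry out is this. $G_i-V(Q)$ has colour classes of sizes $n_i-2$ and $n_i-2$. After removing $C$, the vertices $b$ and $d$ must be matched into at most one part each, while $a$ and $c$ are gone. With three parts, two untouched parts $G_2,G_3$ would each be balanced and closed off, leaving $b,d$ unmatched. This yields the contradiction.

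The main obstacle is ensuring that such a cycle $C$ exists. We need a path from $a$ to $c$ in $G_1-\{b,d\}$ together with a path from $c$ back to $a$ that is internally disjoint from it. Equivalently, $a$ and $c$ must be 2-connected in $G_1-\{b,d\}$. This should follow from braces being 3-connected with extendability properties, though the case of two-part trisums, where edges of $Q$ are deleted, needs care.
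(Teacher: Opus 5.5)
\paragraph{Verdict: genuine gap in the trisum case.} Your reduction via the structure theorem is fine. The planar case is fine. The Heawood case works: such a cycle exists, e.g.\ a $12$-cycle missing only $v$ and one vertex at distance $3$ from $v$.

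\paragraph{Why the counting argument fails.} Suppose $C\subseteq G_1$ meets $Q$ exactly in $\{a,c\}$. Since $C$ is balanced but contains the two white vertices $a,c$ of $Q$, the graph $G_1-V(Q)-V(C)$ has exactly two more white than black vertices. These two surplus white vertices are precisely what the black vertices $b,d$ get matched to, so $b$ and $d$ are not left unmatched. In fact $G_1-V(C)$ is balanced. Moreover, $G_2-V(Q)$ and $G_3-V(Q)$ have perfect matchings (extend $\{ab,cd\}$ in the brace $G_i$). Hence $C$ is conformal in $G$ whenever it is conformal in $G_1$, and you are back to needing a non-conformal cycle of a special form inside a summand, which is the original problem. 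Routing $C$ through two summands via two $a$-$c$-paths does not help either: each of those summands then has a white surplus of exactly one, again absorbed by $b$ and $d$. A cycle meeting $Q$ in one colour class gives no parity obstruction at all. So the 2-connectivity of $a,c$ in $G_1-\{b,d\}$, which you flag as the main obstacle, is beside the point.

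\paragraph{What is missing.} You need a cycle through \emph{three} vertices of $Q$, say $a,b,c$, that also contains every neighbour of $d$ in $G_1$. Then $\{d\}\cup\bigl(V(G_2)\cup V(G_3)\bigr)\setminus V(Q)$ is separated from the rest of $G$ and has odd size. Finding such a cycle requires control over the embedding of a summand, and this is where the paper's work goes:
\begin{enumerate}
\item \zcref{lem:planarsummands} supplies a summand $G_1$ that is planar with $Q$ facial. This is not automatic, since summands of a trisum may themselves be non-planar trisums.
\item In $G_1$, the path $a'abcc'$ together with the facial cycle of $G_1-V(Q)$ surrounding $Q$ forms a bisubdivision of $K_{2,3}$, one of whose regions contains only $d$. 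This is the idea of \zcref{lem:planarbraces}.
\item Since $ab$ or $bc$ may be among the deleted edges of $Q$, they are replaced by odd $a$-$b$- and $b$-$c$-paths through $G_2$ and $G_3$ (\zcref{lem:disjoint4cyclepaths}, \zcref{lem:trisum4cycleedges}). These paths have even numbers of internal vertices, so the cut-off set stays odd.
\end{enumerate}
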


In our proof of this result, we use the characterisation of Pfaffian braces due to Robertson, Seymour, and Thomas~\cite{RobertsonST1999Permanents}, and McCuaig~\cite{McCuaig2004Polyas} to extend some convenient properties of planar graphs to Pfaffian graphs.

In light of \zcref{thm:c4uniquepfaffian}, one might suspect that there are only few cycle-conformal braces in total.
This is easily seen to be false however, as for all $t \in \mathbb{N}$ with $t \geq 2$, it is easy to prove that $K_{t,t}$ is a cycle-conformal brace and similarly, $K_{2t}$ is a cycle-conformal brick.
Furthermore, there even exist infinitely many planar, cubic, cycle-conformal bricks~\cite{DalwadiPDK2025Planar}.
This trend continues in non-planar, cubic, cycle-conformal bricks, as we remark upon in \zcref{lem:moebiscycconf}.
However, we provide the following result for cubic braces which starkly contrasts the situation for bricks.

\begin{theorem}\label{thm:k33issolo}
    The only cubic, cycle-conformal brace is $K_{3,3}$.
\end{theorem}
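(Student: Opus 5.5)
The plan is to exhibit, in every cubic brace $G$ other than $K_{3,3}$, an (automatically even, since $G$ is bipartite) cycle $C$ such that $G-V(C)$ has no perfect matching. The simplest obstruction is an isolated vertex. Fix a vertex $u$ with neighbourhood $N(u)=\{a,b,c\}$. If some cycle $C$ in $G-u$ passes through $a$, $b$ and $c$, then $u$ is isolated in $G-V(C)$, so $C$ is not conformal. So it suffices to show the following: if $G\neq K_{3,3}$ is a cubic brace, then for some vertex $u$ the three neighbours of $u$ lie on a common cycle of $G-u$. This is consistent with $K_{3,3}$ being cycle-conformal. There, any cycle through all three vertices of one colour class has length $6$, so it is Hamiltonian and cannot avoid $u$.

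First I record the basic structure. A brace on at least six vertices is $3$-connected; I take this as a standard fact, or derive it directly from the extendibility of matchings of size two. Hence $G-u$ is $2$-connected for every $u$. Moreover, $G$ is cubic and bipartite with colour classes of equal size $n\ge 3$, and $n=3$ forces $G=K_{3,3}$. So I assume $n\ge 4$. I then apply the Watkins--Mesner theorem to $H=G-u$ and the triple $a,b,c$. In a $2$-connected graph, three vertices fail to lie on a common cycle precisely when one of a few explicit configurations of $2$-separations holds. Essentially, either there is a $2$-cut $\{x,y\}$ with $a,b,c$ in three distinct components of $H-\{x,y\}$, or there are three $2$-cuts arranged so that they pairwise separate the three vertices.

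The core step is to rule out these configurations using the brace property. Each $2$-cut $\{x,y\}$ of $H$ yields a $3$-cut $\{u,x,y\}$ of $G$. I would argue via parity and counting in the bipartite cubic graph. Each component $K$ of $G-\{u,x,y\}$ receives edges only from $u,x,y$, and $a$, $b$, $c$ lie in different components. In a cubic bipartite graph, the number of edges leaving a vertex set $K$ equals $3\,\bigl||K\cap A|-|K\cap B|\bigr|$ when no such edges go inside; more generally, counting degrees on both sides of $K$ constrains the imbalance of $K$. Combining these counts with the fact that deleting $x,y$ plus an edge, or extending a suitably chosen $2$-matching across the cut, must still leave a perfect matching should force a component to be a single vertex or yield a matching of size two that does not extend. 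Here the $2$-extendibility of braces does the work, for example through the standard fact that a brace with more than six vertices has no nontrivial tight cut and is bicritical-like on pairs from opposite sides. Where a bad configuration exists for one choice of $u$, I would also use the freedom to choose a different $u$, for instance a vertex inside one of the small components, and show that a bad configuration cannot occur for all $u$ simultaneously.

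The main obstacle is exactly this last step: turning the Watkins--Mesner separation configurations into a contradiction with $G$ being a cubic brace on at least eight vertices. The bipartite counting has to be done carefully, and the second type of configuration (three interlocking $2$-cuts) will likely need a separate case analysis. If this route gets stuck, a fallback is to look for a more general Hall violation. I would seek a cycle $C$ and a set $S$ in one colour class of $G-V(C)$ with $|N(S)\setminus V(C)|<|S|$, for example two vertices $u,u'$ with a common neighbour whose five or so neighbours can all be threaded onto a single cycle. This gives more flexibility in constructing $C$.
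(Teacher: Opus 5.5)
Your reduction is correct, and it is the same obstruction the paper uses: every non-conformal cycle the paper exhibits passes through all three neighbours of $v$ (or of $v'$) and avoids that vertex. But the proposal stops where the actual work begins. The step you call the main obstacle is only asserted: the counting ``should force'' a contradiction, followed by possible case analysis, switching $u$, or a Hall-type fallback. Your paraphrase of Watkins--Mesner would also mislead you at exactly this point. In $G-u$ the vertices $a,b,c$ have degree $2$, so 2-cuts separating each of them from the other two always exist, namely the pair of other neighbours of each. The genuine third obstruction needs an extra condition: after suppressing $a,b,c$ into edges $e_a,e_b,e_c$, these three edges must form an edge cut. As written, you have not shown that a cubic brace other than $K_{3,3}$ has a vertex whose neighbourhood lies on a cycle avoiding it, and that is the whole content of the theorem.

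The missing idea is a short count that closes the gap completely. Let $G$ be cubic bipartite with classes $A,B$, and let $X\subseteq V(G)$. Write $e_A$ and $e_B$ for the number of edges of $\partial(X)$ whose end in $X$ lies in $A$, respectively $B$. Then $3(|X\cap A|-|X\cap B|)=e_A-e_B$. If $|\partial(X)|=3$, this forces $e_A-e_B=\pm 3$, so every perfect matching uses exactly one edge of $\partial(X)$. Hence every 3-edge cut is tight, and therefore trivial in a brace.

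From this, every 3-separator $S$ of a cubic brace $G\neq K_{3,3}$ equals $N(k)$ for some vertex $k$, and $G-S$ has exactly the two components $\{k\}$ and the rest. To see this, note three components would each be bounded by a 3-edge cut, forcing $G=K_{3,3}$. With two components, move the vertices of $S$ having two neighbours on one side to that side; this yields a 3-edge cut. An edge inside $S$ would create a triangle.

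Consequently the only 2-cuts of $G-u$ are $N(a)\setminus\{u\}$, $N(b)\setminus\{u\}$ and $N(c)\setminus\{u\}$, each cutting off a single vertex. All three Watkins--Mesner configurations then fail:
\begin{enumerate}
\item the first needs three components, but each 2-cut leaves only two;
\item the second needs some $z\neq u$ adjacent to $a,b,c$, which makes $\partial(\{u,z,a,b,c\})$ a 3-edge cut, so $|V(G)|=6$;
\item the third gives a side $U$ of the cut $\{e_a,e_b,e_c\}$ with $|\partial_G(U)|=3$, so both sides are single vertices and again $|V(G)|=6$.
\end{enumerate}
So every vertex $u$ works, and no switching of $u$ or fallback is needed. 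Completed this way, your route gives a much shorter proof than the paper's. The paper builds a special $K_{3,3}$-subdivision by hand and then extracts a nontrivial tight cut via McCuaig's description of tight cuts in cubic bipartite graphs.
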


Unlike with \zcref{thm:c4uniquepfaffian}, the proof of this theorem is completely detached from the methods used for planar graphs.
Instead we first invest significant effort to show that we can find a special $K_{3,3}$-subdivision in any cubic, cycle-conformal brace and then show that there is no way to build any cycle-conformal graph around this special subgraph.

This remarkable difference in the richness of the class of cubic, cycle-conformal bricks and cubic, cycle-conformal braces motivated us to search for more cycle-conformal braces.
However, we could not find any aside from the aforementioned trivial examples furnished by the complete bipartite graphs.
In light of this we pose the following conjecture, for which \zcref{thm:k33issolo} resolves the case $\ell=3$ and $C_4 = K_{2,2}$ trivially verifies the case $\ell=2$.

\begin{conjecture}\label{con:main}
    For each integer $\ell \geq 2$, the only $\ell$-regular, cycle-conformal brace is $K_{\ell,\ell}$.
\end{conjecture}

In fact, we pose the following stronger conjecture, which would imply \zcref{con:main}.
Due to being considerably more general than \zcref{con:main}, our confidence that this second conjecture holds is notably lesser.

\begin{conjecture}\label{con:main2}
    Each cycle-conformal brace is a complete bipartite graph.
\end{conjecture}

Returning to the discussion of our results, we must mention that the restriction of our efforts to braces is not beyond reproach.
To explain this, we will now formally introduce the necessary definitions to outline the tight cut decomposition due to Lov\'asz.

In a graph $G$ with a set $X \subseteq V(G)$, we denote by $\partial(X) \subseteq E(G)$ the \emph{cut (around $X$)}, that is the collection of all edges incident with exactly one vertex in $X$.
By $\overline{X}$ we denote the set $V(G) \setminus X$.
If $G$ is matching covered, a cut $\partial(X)$ is called \emph{tight} if for all perfect matchings $M$ of $G$ we have $|M \cap \partial(X)| = 1$.
Such a tight cut is called \emph{trivial} if we have $|X| = 1$ or $|\overline{X}| = 1$.
Given a tight cut $\partial(X)$ in $G$, we denote by $G[X \mapsto c]$ the graph that results from contracting $X$ into a single vertex $c$ and deleting any resulting loops or parallel edges.
Analogously, $G[\overline{X} \mapsto c]$ is the graph that results from contracting $\overline{X}$ instead.
Both $G[X \mapsto c]$ and $G[\overline{X} \mapsto c]$ are called \emph{tight cut contractions} of $G$.
As Lov\'asz proved~\cite{Lovasz1987Matching}, the result of repeatedly taking tight cut contractions starting with a matching covered graph $G$ is a \textsl{unique} list of graphs that do not have non-trivial tight cuts, independent of which particular tight cuts were chosen during the procedure.
In particular, it is possible to carry out this process in polynomial time~\cite{Lovasz1987Matching}.
If a matching covered graph does not have non-trivial tight cuts it is called a \emph{brace}, if it is bipartite, and a \emph{brick}, if it is non-bipartite, which gives equivalent definitions of the two classes we introduced earlier~\cite{Plummer1986Matching,Lovasz1987Matching}.

While many problems in matching theory can be safely reduced to bricks and braces -- as several properties are preserved by the operation of taking tight cut contractions (see for example~\cite{VaziraniY1989Pfaffian,GorskyKKW2024Packing,Gorsky2024Structure} and \zcref{lem:tightcutpreservematcov}) -- this is not entirely true for the property of being cycle-conformal.
However, one of the two directions does hold.

\begin{proposition}[Dalwadi, Pause, Diwan, and Kothari~\cite{DalwadiPDK2025Planar}]\label{thm:tightcutpreserveeasy}
    Let $G$ be a matching covered graph with a tight cut $\partial(X)$ and let $G_1,G_2$ be the two tight cut contractions corresponding to $\partial(X)$.
    Then if $G$ is cycle-conformal, so are $G_1$ and $G_2$.
\end{proposition}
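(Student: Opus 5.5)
By symmetry it suffices to treat $G_1 = G[\overline{X} \mapsto c]$, whose vertex set is $X \cup \{c\}$. The plan is to lift an even cycle of $G_1$ to an even cycle of $G$, use that $G$ is cycle-conformal, and project the resulting perfect matching back to $G_1$. Two facts about tight cuts are used throughout, and I take both as standard. First, $G_1$ is matching covered (\zcref{lem:tightcutpreservematcov}), so in particular it has a perfect matching. Second, every perfect matching $M$ of $G$ meets $\partial(X)$ in exactly one edge $e$, and $M$ restricts to a perfect matching of $G_1$: take $M \cap E(G[X])$ together with $e$, read in $G_1$ as an edge at $c$. Conversely, for any edge $f \in \partial(X)$, a perfect matching of $G_1$ containing the image of $f$ combines with a perfect matching of $G_2$ containing $f$ to give a perfect matching of $G$; this uses that $G_2$ is also matching covered.

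Let $C$ be an even cycle of $G_1$.

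\textbf{Case 1: $c \notin V(C)$.} Then $C$ is an even cycle of $G[X] \subseteq G$. Since $G$ is cycle-conformal, $G - V(C)$ has a perfect matching $M'$. Extend $M'$ by a perfect matching of $C$ to obtain a perfect matching $M$ of $G$. By tightness $M$ uses exactly one edge $e$ of $\partial(X)$, and $e \notin E(C)$ because $C$ lies inside $X$. So $e \in M'$. The restriction of $M'$ to $G_1$ is $(M' \cap E(G[X])) \cup \{e\}$, which is a perfect matching of $G_1 - V(C)$.

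\textbf{Case 2: $c \in V(C)$.} Here $C$ uses two edges $cu, cv$ with $u, v \in X$, and these come from edges $uu', vv' \in \partial(X)$ with $u', v' \in \overline{X}$. The key step is to find a $u'$--$v'$ path $P$ in $G[\overline{X}]$ of suitable parity such that $C' = (C - c) + uu' + P + v'v$ is an even cycle of $G$ and $G[\overline{X}] - V(P)$ has a perfect matching. I would take $P$ to be an $M_2$-alternating path with respect to a suitable perfect matching.

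To construct it, fix a perfect matching $N$ of $G_2$ containing $uu'$. Then $N - uu'$ is a perfect matching of $G[\overline{X}] - u'$. Also fix a perfect matching $N'$ of $G_2$ containing $vv'$. The symmetric difference $N \triangle N'$ contains an alternating cycle through $c$ that uses the edges $cu'$ and $cv'$. Removing $c$ from this cycle leaves an $N$/$N'$-alternating path $P$ from $u'$ to $v'$ inside $\overline{X}$, of even length. Outside $V(P)$, the set $N \cap E(G[\overline{X}])$ restricted to the complement of $V(P)$ is a perfect matching of $G[\overline{X}] - V(P)$. Then $C'$ has length $|E(C)| - 2 + 2 + |E(P)|$, which is even. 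Since $G$ is cycle-conformal, $G - V(C')$ has a perfect matching $M'$.

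By parity, $X \setminus V(C)$ has even size, so $M'$ uses an even number of edges of $\partial(X)$. Every perfect matching of $G$ meets $\partial(X)$ exactly once, and combining $M'$ with a perfect matching of $C'$ chosen to avoid $uu'$ and $vv'$ produces such a matching. Hence $M'$ uses exactly one edge of the cut, which would be odd. So I have to choose the perfect matching of $C'$ more carefully: pick the one containing $uu'$. Then $M'$ uses zero cut edges, so $M' \cap E(G[X])$ is a perfect matching of $G[X] - V(C)$, which is exactly $G_1 - V(C)$.

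\textbf{Main obstacle.} The hard part is Case 2: making sure the alternating path $P$ exists and that the parity and tightness bookkeeping is consistent. The construction via $N \triangle N'$ should handle this. If $uu'$ and $vv'$ map to the same vertex, i.e.\ $u' = v'$, then $P$ is trivial and the argument simplifies.
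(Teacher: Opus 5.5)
The paper does not prove this proposition itself; it is imported from Dalwadi, Pause, Diwan, and Kothari, so there is no in-paper argument to compare yours with. Your proof is correct and self-contained.

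It differs from the one tight-cut argument the paper does give, namely the converse direction for cubic bipartite graphs. There, a cycle of $G$ crossing the cut is projected onto both contractions, and this relies on McCuaig's description of the cut as an induced $3$-matching. Your direction needs no structure of the cut. It uses only tightness and two perfect matchings of $G_2$, one through $cu'$ and one through $cv'$, whose symmetric difference supplies an even $u'$--$v'$ detour inside $\overline{X}$. The evenness is genuinely needed, since $G$ need not be bipartite.

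Two points in Case 2 should be tidied.

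First, delete the sentence about combining $M'$ with a perfect matching of $C'$ ``chosen to avoid $uu'$ and $vv'$''; it is false. These are the only two cut edges on $C'$, and they lie in different perfect matchings of $C'$ because the arc $P$ between them has even length. So every perfect matching of $C'$ contains exactly one of them. Your final choice is right and needs no parity count. Let $M_{C'}$ be the perfect matching of $C'$ containing $uu'$. Then $M' \cup M_{C'}$ is a perfect matching of $G$ that already contains the cut edge $uu'$, so tightness gives $M' \cap \partial(X) = \emptyset$. Hence $M'$ restricts to a perfect matching of $G[X] - (V(C) \setminus \{c\}) = G_1 - V(C)$.

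Second, the property that $G[\overline{X}] - V(P)$ has a perfect matching is never used; it follows afterwards from $M'$ anyway. All the $N \triangle N'$ construction must deliver is a $u'$--$v'$ path in $G[\overline{X}]$ of even length, so that $C'$ is an even cycle of $G$.
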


That a matching covered, cycle-conformal graph may have non-cycle-conformal bricks or braces was already pointed out in~\cite{DalwadiPDK2025Planar}.
This negative result can be extended, as such graphs exist even if all involved graphs are bipartite and planar (see \zcref{fig:tightcutcounterexample}).
Contrasting this, we show that cubic, matching covered, bipartite graphs are indeed closed under taking tight cut contractions.

\begin{lemma}\label{thm:cubictightcuts}
    A cubic, matching covered, bipartite graph is cycle-conformal if and only if its braces are cycle-conformal.
\end{lemma}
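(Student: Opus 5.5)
We prove both directions by induction along the tight cut decomposition. The forward direction is immediate: if $G$ is cycle-conformal, then by \zcref{thm:tightcutpreserveeasy} so are both tight cut contractions of any tight cut. Iterating this (each contraction is again matching covered by \zcref{lem:tightcutpreservematcov}), every brace of $G$ is cycle-conformal. For the backward direction, we induct on $|V(G)|$. If $G$ has no non-trivial tight cut, then $G$ is its own brace and there is nothing to show. Otherwise we pick a non-trivial tight cut $\partial(X)$ with contractions $G_1 = G[\overline{X}\mapsto c]$ and $G_2 = G[X\mapsto c]$. The braces of $G$ are exactly the braces of $G_1$ together with those of $G_2$. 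So once we know that $G_1$ and $G_2$ are again cubic, matching covered and bipartite, induction makes them cycle-conformal, and it remains to lift cycle-conformality to $G$.

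\textbf{Structural step: tight cuts have size 3.} Let $(A,B)$ be the bipartition of $G$. For a tight cut in a bipartite matching covered graph we may assume $|X\cap A| = |X\cap B|+1$, and no edge joins $X\cap B$ to $\overline{X}$. Otherwise a perfect matching would have to use at least three cut edges, since it must match the surplus of $A$-vertices in $X$. Counting degrees in the cubic graph then gives
\[
|\partial(X)| = 3|X\cap A| - 3|X\cap B| = 3 .
\]
Hence the contraction vertex $c$ has degree $3$, provided no two cut edges share an endpoint in $\overline{X}$ (and symmetrically in $X$). I would rule this out as follows. If a vertex $v\in\overline{X}$ met two cut edges, then $v$ together with its third edge would yield a cut of size at most $2$ separating a non-trivial part. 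This contradicts the fact that matching covered bipartite cubic graphs are 2-connected and that the cut is non-trivial; alternatively, one can move $v$ into $X$ and observe that the third edge of $v$ then lies in no perfect matching. I expect this step, guaranteeing simplicity and cubicity of the contractions, to be the main technical obstacle. Given it, $G_1$ and $G_2$ are cubic and bipartite, and matching covered by \zcref{lem:tightcutpreservematcov}.

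\textbf{Lifting step.} Let $C$ be an even cycle of $G$. Since $C$ crosses $\partial(X)$ an even number of times and $|\partial(X)|=3$, it crosses either $0$ or $2$ times.

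If $C$ crosses $0$ times, we may assume $C\subseteq G[X]$, so $C$ is an even cycle of $G_1$. Take a perfect matching $M_1$ of $G_1 - V(C)$. It covers $c$ by some cut edge $e$. Since $G_2$ is matching covered, there is a perfect matching $M_2$ of $G_2$ containing $e$. Then $(M_1\cup M_2)$ restricted to $G$ is a perfect matching of $G-V(C)$.

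If $C$ crosses twice, via cut edges $e$ and $f$, then $C\cap G[X]$ together with $c$ forms a cycle $C_1$ in $G_1$, and similarly we obtain a cycle $C_2$ in $G_2$. Both are even because $G_1,G_2$ are bipartite. Let $M_i$ be a perfect matching of $G_i - V(C_i)$ for $i=1,2$. Since $c\in V(C_i)$, neither $M_i$ uses a cut edge. Hence $M_1\cup M_2$ is a perfect matching of $G - V(C)$.

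In both cases $C$ is conformal, so $G$ is cycle-conformal, which completes the induction.
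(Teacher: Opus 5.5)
Your count $|\partial(X)|=3$ and your lifting step are correct; the lifting step is essentially the paper's \zcref{lem:tightcutspreservecycconf}. The step you flagged as the main obstacle, however, fails. In a cubic, bipartite, matching covered graph, two edges of a non-trivial tight cut can share an end. Then a contraction is not cubic, and your induction hypothesis cannot be applied to it.

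For instance, let $G$ consist of two copies of $K_{3,3}$ minus an edge, with colour classes $\{a_i,a_i',a_i''\}$, $\{b_i,b_i',b_i''\}$ and missing edge $a_ib_i$ for $i\in[2]$, together with the edges $a_1b_2$ and $a_2b_1$. Then $G$ is cubic, bipartite, 2-connected, and matching covered, since every edge of a regular bipartite graph lies in a perfect matching. For $X=\{a_1,a_1',a_1'',b_1',b_1''\}$, the cut $\partial(X)=\{a_1b_2,a_1'b_1,a_1''b_1\}$ is a non-trivial tight cut, and two of its edges meet at $b_1$. In $G[X\mapsto c]$ both $c$ and $b_1$ have degree $2$.

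Both of your justifications break down here. The resulting 2-edge cut $\partial(X\cup\{b_1\})=\{a_1b_2,a_2b_1\}$ is perfectly compatible with 2-connectivity. The third edge $b_1a_2$ does lie in perfect matchings; they simply use $a_1b_2$ as their edge in $\partial(X)$. Your two-crossing case fails too: the cycle $a_1'b_1a_1''b_1'a_1'$ enters and leaves $\overline{X}$ through the same vertex, so it yields no cycle in $G[X\mapsto c]$.

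The missing ingredient is 3-connectivity. If $G$ is 3-connected (equivalently 3-edge-connected, since $G$ is cubic), your 2-edge-cut argument does give a contradiction. One then checks directly that both contractions are again cubic and 3-edge-connected, so the induction stays inside the class. This is exactly what the paper imports from McCuaig's \zcref{lem:tightcutshape} and \zcref{lem:tightcutpreserve} before running the same lifting argument.

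Be aware that those lemmas are also stated only for 3-connected graphs. Cubic bipartite graphs with 2-edge cuts therefore need a separate reduction, for example splitting along 2-edge cuts, which neither your proposal nor the cited lemmas provide. The graph above is such a case: it is cycle-conformal but has $C_4$ among its braces.
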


\begin{figure}[ht]
     \centering
        \begin{tikzpicture}
            \node (A0) at (1,2.25) [draw, circle, scale=0.6] {};
            \node (A1) at (1,1) [draw, circle, scale=0.6] {};
            \node (A2) at (2.5,1.75) [draw, circle, scale=0.6] {};
            \node (A3) at (3.25,0.5) [draw, circle, scale=0.6] {};

            \node (B0) at (1,1.5) [draw, circle, scale=0.6, fill] {};
            \node (B1) at (1,0.5) [draw, circle, scale=0.6, fill] {};
            \node (B2) at (1.75,1.5) [draw, circle, scale=0.6, fill] {};
            \node (B3) at (3.25,2.25) [draw, circle, scale=0.6, fill] {};

            \path
                (A0) edge[very thick] (B0)
                (A0) edge[very thick, bend right] (B1)
                (A0) edge[very thick] (B2)
                (A0) edge[very thick] (B3)
                (A1) edge[very thick] (B0)
                (A1) edge[very thick] (B1)
                (A1) edge[very thick] (B2)
                (A1) edge[very thick, bend right] (B3)
                (A2) edge[very thick] (B2)
                (A2) edge[very thick] (B3)
                (A3) edge[very thick] (B1)
                (A3) edge[very thick] (B3)
                ;
        \end{tikzpicture}
    \qquad
        \begin{tikzpicture}
            \node (A0) at (1,2.25) [draw, circle, scale=0.6] {};
            \node (A1) at (1,1) [draw, circle, scale=0.6] {};
            \node (A2) at (2.5,1.75) [draw, circle, scale=0.6] {};
            \node (A3) at (3.25,0.5) [draw, circle, scale=0.6] {};

            \node (B0) at (1,1.5) [draw, circle, scale=0.6, fill] {};
            \node (B1) at (1,0.5) [draw, circle, scale=0.6, fill] {};
            \node (B2) at (1.75,1.5) [draw, circle, scale=0.6, fill] {};
            \node (B3) at (3.25,2.25) [draw, circle, scale=0.6, fill] {};

            \path
                (A0) edge[very thick] (B0)
                (A0) edge[very thick, bend right,red] (B1)
                (A0) edge[very thick] (B2)
                (A0) edge[very thick,red] (B3)
                (A1) edge[very thick] (B0)
                (A1) edge[very thick,red] (B1)
                (A1) edge[very thick,red] (B2)
                (A1) edge[very thick, bend right] (B3)
                (A2) edge[very thick,red] (B2)
                (A2) edge[very thick,red] (B3)
                (A3) edge[very thick] (B1)
                (A3) edge[very thick] (B3)
                ;
        \end{tikzpicture}
    \qquad
	\begin{tikzpicture}
            \node (A0) at (1,2.25) [draw, circle, scale=0.6] {};
            \node (A1) at (1,1) [draw, circle, scale=0.6] {};
            \node (A2) at (2.5,1.75) [draw, circle, scale=0.6] {};
            \node (A3) at (3.25,0.5) [draw, circle, scale=0.6] {};

            \node (B0) at (1,1.5) [draw, circle, scale=0.6, fill] {};
            \node (B1) at (1,0.5) [draw, circle, scale=0.6, fill] {};
            \node (B2) at (1.75,1.5) [draw, circle, scale=0.6, fill] {};
            \node (B3) at (3.25,2.25) [draw, circle, scale=0.6, fill] {};

            \path
                (A0) edge[very thick] (B0)
                (A0) edge[very thick, bend right,dashed] (B1)
                (A0) edge[very thick,dashed] (B2)
                (A0) edge[very thick,dashed] (B3)
                (A1) edge[very thick] (B0)
                (A1) edge[very thick,dashed] (B1)
                (A1) edge[very thick,dashed] (B2)
                (A1) edge[very thick, bend right,dashed] (B3)
                (A2) edge[very thick] (B2)
                (A2) edge[very thick] (B3)
                (A3) edge[very thick] (B1)
                (A3) edge[very thick] (B3)
                ;
        \end{tikzpicture}
     \caption{A matching covered, planar, bipartite graph whose braces are all isomorphic to $C_4$. In the graph in the middle a non-conformal even cycle is indicated in red. On the right a tight cut is indicated such that both tight cut contractions corresponding to it are cycle-conformal.}
     \label{fig:tightcutcounterexample}
\end{figure}
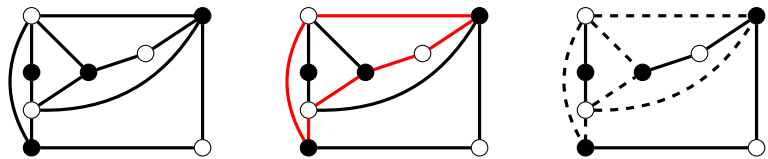

The counterexample from~\cite{DalwadiPDK2025Planar} we discussed above easily demonstrates that the condition of being bipartite is necessary in the above statement.
Combined with \zcref{thm:k33issolo}, the above lemma allows us to provide the following characterisation.

\begin{theorem}\label{thm:cubiccharacterisation}
    A cubic, bipartite, matching covered graph is cycle-conformal if and only if all of its braces are $K_{3,3}$.
\end{theorem}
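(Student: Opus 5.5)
The plan is to combine \zcref{thm:cubictightcuts} with \zcref{thm:k33issolo}. The one extra ingredient is that every brace of a cubic, bipartite, matching covered graph is again cubic. Granting that, let $G$ be such a graph. By \zcref{thm:cubictightcuts}, $G$ is cycle-conformal if and only if each of its braces is cycle-conformal. Each brace is then a cubic brace, so by \zcref{thm:k33issolo} it is cycle-conformal exactly when it is isomorphic to $K_{3,3}$. Chaining the two equivalences gives the theorem. For the backward direction one may also note directly that $K_{3,3}$ is cycle-conformal, since $K_{t,t}$ is, as remarked in the introduction.

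The main step is cubicity of the braces. Since $G$ is bipartite, every tight cut contraction of $G$ is bipartite, and no bricks appear in the decomposition. Let $\partial(X)$ be a non-trivial tight cut in a cubic, bipartite, matching covered $G$ with colour classes $A,B$. By the standard structure of tight cuts in bipartite graphs, after possibly swapping $X$ and $\overline{X}$, we have $|X\cap A| = |X\cap B|+1$, and every edge of $\partial(X)$ joins $X\cap A$ to $\overline{X}\cap B$.

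We now count edges. Exactly $3|X\cap B|$ edges leave $X\cap B$, and all of them end in $X\cap A$. The vertices of $X\cap A$ have total degree $3|X\cap A| = 3|X\cap B|+3$. Hence exactly three edges leave $X$, that is, $|\partial(X)| = 3$.

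In $G[X\mapsto c]$ the new vertex $c$ therefore has degree at most $3$. The question is whether two of these three edges could share an endpoint in $\overline{X}\cap B$ and become parallel. Matching covered bipartite graphs on at least four vertices are $2$-connected, and $G[X\mapsto c]$ is itself matching covered by \zcref{lem:tightcutpreservematcov}. I expect this to rule out the degenerate case, though it needs checking. If two cut edges did share an endpoint $v$, then $v$ would have only one neighbour outside $X$. In that situation, either $\partial(X\cup\{v\})$ is again a tight cut of size $3$ with a smaller shore on the other side, or $G$ has a $2$-edge-cut. Both cases should be manageable, but they must be handled.

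Once the degenerate case is excluded, $c$ has degree $3$ and every other vertex keeps its degree. So both contractions are cubic, bipartite and matching covered, and induction on $|V(G)|$ shows that all braces are cubic.

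I expect this degree verification to be the only real obstacle. If the parallel-edge issue turns out to be delicate, I would instead use the uniqueness of the tight cut decomposition to pick, at each stage, a tight cut whose contraction provably avoids it. Alternatively, I would observe that a brace on at least six vertices is $3$-connected with minimum degree at least $3$. Then a brace of $G$ with a vertex of degree below $3$ could only be $C_4$, and since every vertex of a brace has degree at most $3$ here, it remains to exclude $C_4$ by the counting above, because the contraction vertex $c$ has exactly three distinct neighbours.
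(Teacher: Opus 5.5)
\textbf{There is a genuine gap, and it cannot be closed.} Your skeleton is the paper's. Its one-line proof combines \zcref{thm:cubictightcuts} and \zcref{thm:k33issolo} with McCuaig's \zcref{lem:tightcutpreserve}, which is exactly your ``extra ingredient''. But \zcref{lem:tightcutpreserve}, like \zcref{lem:tightcutshape}, assumes that $G$ is $3$-connected. Without that hypothesis your main step is false, and so is the statement as written.

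Here is a counterexample. For $i\in[2]$ let $H_i$ be $K_{3,3}$ on white vertices $a_i,x_i,y_i$ and black vertices $b_i,p_i,q_i$, minus the edge $a_ib_i$. Let $G$ consist of $H_1$, $H_2$ and the two edges $a_1b_2$, $a_2b_1$. Then $G$ is cubic and bipartite, hence matching covered.
\begin{itemize}
\item Take $X=\{a_1,x_1,y_1,p_1,q_1\}$. Then $\partial(X)=\{a_1b_2,x_1b_1,y_1b_1\}$ is a non-trivial tight cut, since all neighbours of $p_1,q_1$ lie in $X$. However, two of its edges end at $b_1$.
\item Hence $G[X\mapsto c]$ is $K_{3,3}$ with the edge $a_2b_2$ replaced by the path $a_2b_1cb_2$. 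It has two vertices of degree $2$, and its braces are $K_{3,3}$ and $C_4$. So $C_4$ is a brace of $G$.
\item Yet $G$ is cycle-conformal. A cycle inside some $H_i$ leaves behind in $H_i$ either nothing, an edge, or the pair $\{a_i,b_i\}$. In the last case, match $a_ib_{3-i}$ and $a_{3-i}b_i$, together with a perfect matching of $H_{3-i}-\{a_{3-i},b_{3-i}\}\cong C_4$.
\item A cycle using both joining edges meets each $H_i$ in an $a_i$-$b_i$-path of length $3$ or $5$. Its complement in $H_i$ is a single edge or empty.
\end{itemize}
Your ``2-edge-cut'' case is precisely this situation. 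Your other alternative never occurs: $v$ is black and $X$ has one more white than black vertex, so $X\cup\{v\}$ is balanced and $\partial(X\cup\{v\})$ cannot be tight. Your fallback also fails, because $|\partial(X)|=3$ does not give the contraction vertex three distinct neighbours.

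\textbf{The repair.} The theorem needs $G$ to be $3$-connected, and that case is all the paper's argument actually covers. The backward direction needs it too, since \zcref{lem:tightcutspreservecycconf} relies on \zcref{lem:tightcutshape}. Under this hypothesis, your counting argument gives a self-contained replacement for \zcref{lem:tightcutpreserve}:
\begin{itemize}
\item Suppose two edges of $\partial(X)$ met at some $v\in\overline{X}$. Since $|\overline{X}|\geq 3$, the set $\partial(X\cup\{v\})$ would be an edge cut of size $2$. This is impossible in a cubic $3$-connected, hence $3$-edge-connected, graph.
\item So no edges merge, both contractions are cubic, and every edge cut of a contraction is an edge cut of $G$ of the same size.
\item Therefore the contractions are again $3$-edge-connected. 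Being cubic, they are $3$-connected, so your induction on $|V(G)|$ goes through.
\end{itemize}
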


Furthermore, the fact that tight cut contractions do not preserve being cycle-conformal tells us that \zcref{thm:c4uniquepfaffian} is not enough to itself characterise the Pfaffian, bipartite, cycle-conformal graphs.
However, with a bit of work this can be overcome, allowing us to extend \zcref{thm:c4uniquepfaffian} to a characterisation of matching covered, Pfaffian, bipartite, cycle-conformal graphs.

\begin{theorem}\label{thm:pfaffcycconfcharacterisation}
    Any matching covered, bipartite, cycle-conformal graph is Pfaffian if and only if it is planar.
\end{theorem}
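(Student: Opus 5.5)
The plan is to reduce the statement to the brace level, where \zcref{thm:c4uniquepfaffian} and \zcref{thm:c4uniqueplanar} apply, and then rebuild planarity from the tight cut decomposition. One direction is immediate. If $G$ is planar, then $G$ is Pfaffian by Kasteleyn's theorem~\cite{Kasteleyn1963Dimer}. So only the converse needs work: let $G$ be matching covered, bipartite, cycle-conformal and Pfaffian, and show that $G$ is planar.

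The first step is to show that every brace of $G$ is $C_4$. Fix a tight cut decomposition of $G$. By \zcref{thm:tightcutpreserveeasy}, applied repeatedly, every tight cut contraction along the way is cycle-conformal. Matching covered and bipartite are preserved by tight cut contractions. Being Pfaffian is also preserved (the cited result of Vazirani and Yannakakis~\cite{VaziraniY1989Pfaffian}). Hence every brace of $G$ is a Pfaffian, cycle-conformal brace, and by \zcref{thm:c4uniquepfaffian} it is $C_4$.

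The second step, which I expect to be the main obstacle, is to show that a matching covered bipartite graph that is cycle-conformal and has only $C_4$ braces is planar. Knowing the braces alone is not enough, since a graph built from $C_4$ braces by tight cut splicing can be non-planar. I would argue by induction on $|V(G)|$ using a non-trivial tight cut $\partial(X)$. Since $G$ is bipartite, a tight cut has the form where $X$ has exactly one more vertex on one side, and $\partial(X)$ runs between one colour class inside and the other outside. The contractions $G_1=G[\overline{X}\mapsto c]$ and $G_2=G[X\mapsto c']$ are smaller, cycle-conformal, and Pfaffian, so by induction both are planar. Gluing plane embeddings of $G_1$ and $G_2$ at $c$ and $c'$ gives a planar embedding of $G$ exactly when the rotations of the cut edges around $c$ and around $c'$ are compatible, that is, mirror images of each other in the cyclic order of $\partial(X)$.

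To force this compatibility, I would first try to show that cycle-conformality limits how $\partial(X)$ can look. The idea is that a non-conformal even cycle arises precisely when the cut edges cross, as in \zcref{fig:tightcutcounterexample}. Concretely, suppose the cyclic orders around $c$ and $c'$ disagree. Then one can pick four cut edges $e_1,e_2,e_3,e_4$ that interleave incompatibly. Each contraction is matching covered and planar, so $G_1$ has paths joining the ends of these edges and $G_2$ has complementary paths. Combining them gives an even cycle $C$ in $G$ crossing $\partial(X)$ four times, and on each side $C$ cuts off a region containing an odd number of vertices. This contradicts tightness, or the fact that $G-V(C)$ has a perfect matching: every perfect matching uses exactly one cut edge, while $C$ would demand a parity split forcing zero or two.

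As a fallback, I would instead use the Pfaffian structure directly. By Little's theorem, a bipartite graph is Pfaffian if and only if it has no conformal (matching minor) bisubdivision of $K_{3,3}$. If $G$ were non-planar, Kuratowski's theorem gives a $K_{3,3}$ or $K_5$ subdivision. In a bipartite cycle-conformal graph, one would aim to show that such a subdivision can be rerouted into an even subdivision of $K_{3,3}$ whose deletion leaves a perfect matching. Cycle-conformality makes every even cycle of the subdivision conformal, which should let the whole subgraph become conformal. That would contradict Pfaffianity.
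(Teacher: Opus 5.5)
\textbf{Verdict: genuine gap.} Your first step (every brace of $G$ is a Pfaffian, cycle-conformal brace, hence $C_4$) and the easy direction via Kasteleyn agree with the paper. Your second step, however, does not go through as written.

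For a general non-trivial tight cut, planarity of the gluing is not decided by comparing two cyclic orders of $\partial(X)$, for three reasons.
\begin{enumerate}
\item Parallel edges are deleted in a tight cut contraction. So the rotation at $c$ in $G_1=G[\overline{X}\mapsto c]$ orders the \emph{vertices} of $X$ incident with the cut, not the cut edges. The cut edges can form an essentially arbitrary bipartite graph between these vertices and their partners in $\overline{X}$, and several cut edges typically share an end. Your notion of ``compatible rotations'' is therefore not well-defined.
\item $G_1$ and $G_2$ need not be $3$-connected, so their plane embeddings are not unique. An incompatibility for one pair of embeddings proves nothing; you would have to extract an obstruction from the fact that \emph{no} pair is compatible.
\item The two disjoint linking paths on each side are only asserted. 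They need the neighbours of $c$ to lie in the right order on a facial cycle of a $2$-connected $G_1-c$, which fails in general. For example, if $G_1\cong C_4$, then $G_1-c$ is just a path $uvw$ whose ends carry several cut edges each.
\end{enumerate}
So the implication ``incompatible rotations $\Rightarrow$ even cycle through four cut edges'' is missing, and it is the entire content of the step.

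Your fallback does not close the gap either. Conformality of all even cycles of a bisubdivision $H$ of $K_{3,3}$ does not give a perfect matching of $G-V(H)$: a perfect matching of $G-V(C)$ may match vertices of $H-V(C)$ to vertices outside $H$. You also give no argument for rerouting a Kuratowski subdivision into a bisubdivision of $K_{3,3}$.

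\textbf{What is right, and what is missing.} The obstruction you identified is the correct one: an even cycle meeting a tight cut of a bipartite graph in four edges is not conformal. The reason is that each of its two perfect matchings contains two cut edges; your ``odd region'' explanation is not the right parity argument. What you are missing is a choice of tight cut that makes the gluing trivial.

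The paper takes a minimal counterexample and uses \zcref{lem:findbrickbrace} to pick a tight cut one of whose contractions is a $C_4$ brace. Then $X=\{u,v,w\}$, where $v$ has degree two in $G$ and is adjacent only to $u$ and $w$. By minimality, the other contraction $G[X\mapsto c]$ is planar, so $G$ arises from a planar graph by splitting $c$ into the path $uvw$. Now take a Kuratowski subdivision $H$ in $G$.
\begin{enumerate}
\item If at most one of $u,w$ is a branch vertex of $H$, then $H$ transfers to $G[X\mapsto c]$, because the neighbourhoods of $u$ and $w$ lie inside that of $c$.
\item Otherwise $H$ contains a cycle through $u$ and $w$ avoiding $v$. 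This cycle is even and isolates $v$; it is exactly an even cycle through four edges of $\partial(X)$, two at $u$ and two at $w$.
\end{enumerate}
This configuration uses two cut edges at the same vertex, which your rotation picture cannot see. With this choice of cut, your induction would also go through, and no general rotation-compatibility argument is needed.
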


For the convenience of the reader, we provide one of the characterisations of matching covered, bipartite, cycle-conformal, planar graphs at the end of \zcref{sec:tightcuts} (see \zcref{cor:Pfaffiancharacterisation} in particular).

\paragraph{Related work.}
One may also be interested in the number of perfect matchings that remain after the deletion of an even cycle.
If one demands that after the deletion of any even cycle the resulting graph has at most one perfect matching, this gives a definition for the class known as \emph{perfect matching compact} graphs (as pointed out in~\cite{WangZZ2018Characterization}).
These graphs have been studied extensively over the years~\cite{WangLCLSL2013Characterization,LiuW2014Note,WangSLC2014Characterization,WangYL2015Characterization,DeCarvalhoKWL2020Birkhoffvon,ZhouLFD2022Note,ZhangWY2022PMcompact}.
The problem of recognising perfect matching compact graphs is known to be in co-$\NP$ but not known to be in $\NP$ (see~\cite{DeCarvalhoKWL2020Birkhoffvon}), a situation which mirrors the state of the art for cycle-conformal graphs (see~\cite{DalwadiPDK2025Planar}).
Recently graphs in which the deletion of any cycle results in a unique perfect matching have started to be considered as well~\cite{WangZZ2018Characterization,ZhangWY2020Even,ZhangW2022Characterizations} and clearly any such graph is also cycle-conformal.
A specialised version of this is found in the study of \emph{forcing faces} in bipartite planar graphs (see~\cite{CheC2008Forcing,CheC2013Forcing}).

\paragraph{A note on terminology.}
There exist several other names in the literature for \emph{cycle-conformal} graphs.
They are also known as \emph{cycle-nice} (see~\cite{ZhangWYNC2022Cyclenice}), \emph{1-cycle-resonant} (see~\cite{KlavzarS2012Characterization}), or \emph{cycle-extendable} (see~\cite{DalwadiPDK2025Planar}) graphs.
Our choice to introduce yet another name for this class is grounded in an attempt to give this class a non-generic, appropriate, and distinct name.
Terms like \emph{good} and \emph{nice} are often used to generically describe a property of a graph in a variety of unrelated context and are thus not specific enough.
The property of being \emph{1-cycle-resonant} only coincides with being \emph{cycle-conformal} in the context of bipartite graphs, making it inappropriate for the class as a whole.
Finally, the term cycle-extendable does give a great description of what we expect our cycles to do,\footnote{Any perfect matching of an even cycle in a cycle-conformal graph $G$ can be extended to a perfect matching of $G$.} but there already exists a well-studied class of graphs that are called \emph{cycle extendable} (see the line of research based on the concepts in~\cite{Hendry1990Extending}).
We believe that the name \emph{cycle-conformal} is non-generic and accurately reflects that we want each (even) cycle to be conformal.
Furthermore, a search for the term ``cycle-conformal'' did not yield any other related graph-theoretic literature in conflict with this class.

We also note that the term \emph{conformal} itself has appeared in different guises, having been called \textsl{well-fitted}~\cite{McCuaig2004Polyas}, \textsl{nice}~\cite{Lovasz1987Matching}, and \textsl{central}~\cite{RobertsonST1999Permanents}.
Here we base our decision on the fact that the term \emph{conformal} has been the de facto consensus established throughout the most prominent publications in matching theory over the last two decades (see~\cite{Wiederrecht2022Matching,LucchesiM2024Perfect} for reference).

\paragraph{Structure of the paper.}
We start by providing the preliminaries necessary for our results in \zcref{sec:prelim}.
This is followed by the proof of \zcref{thm:c4uniquepfaffian} in \zcref{sec:braces} and a study of tight cuts in \zcref{sec:tightcuts}, which includes the proofs of \zcref{thm:cubictightcuts} and \zcref{thm:pfaffcycconfcharacterisation}.
In \zcref{sec:cubic} we provide the proof of \zcref{thm:k33issolo} from which \zcref{thm:cubiccharacterisation} then follows with ease.
We conclude our paper with a discussion of non-bipartite cycle-conformal graphs and possible directions for future research in \zcref{sec:generation}.

\section{Preliminaries}\label{sec:prelim}
The graphs in this paper are loopless and do not possess multiple edges.
We generally adopt our notation for graphs from~\cite{Diestel2010Graph}, except where otherwise noted.
For example, we denote the complete graph on $t$ vertices as $K_t$.

We will consider bipartite graphs throughout much of this article.
A graph $G$ is called \emph{bipartite} if its vertex set can be partitioned into two disjoint sets $A,B$ such that all edges of $G$ have one end in $A$ and the other in $B$.
We call $A$ and $B$ the \emph{colour classes} of $G$ and will often refer to $G$ having \emph{black} and \emph{white} vertices, which refers to containment of the vertices in these two sets.
These colours will also be used to indicate the colour classes of a bipartite graph in our figures.

We denote the complete bipartite graphs with $s$ white vertices and $t$ black vertices as $K_{s,t}$.
If $t \geq 3$, we let $C_t$ denote the cycle on $t$ vertices.
We will often call a cycle of length 4 a \emph{4-cycle}.

\paragraph{Paths, Separators, and Subdivisions.}
Let $G$ be a graph and let $k$ be a positive integer.
We call a set $S \subseteq V(G)$ a \emph{separator (in $G$)} if $G - S$ is non-empty and not connected.
If $S = \{ v \}$, we also call $v$ a \emph{cut-vertex} and in general, if $|S| = k$, we call $S$ a \emph{$k$-separator (in $G$)}.
The graph $G$ is called \emph{$k$-connected} if it contains at least $k+1$ vertices and there does not exist a $k'$-separator in $G$ with $k' < k$.

We call the non-endpoint vertices of a path its \emph{internal vertices}.
Should a path consist of a single vertex, we call it \emph{trivial}.
If $P$ is a path in a graph $G$ and $S \subseteq V(G)$, we say that $P$ is \emph{internally disjoint from $S$} if no internal vertex of $P$ lies in $S$.
More generally, if $H \subseteq G$, we say that $P$ is internally disjoint from $H$ if $P$ is internally disjoint from $V(H)$ and if $H$ is also a path, we say that $P$ and $H$ are internally disjoint if they are pairwise internally disjoint.

Let $A,B$ be two non-empty sets of vertices in a graph $G$, then a path $P$ with one endpoint in $A$ and the other in $B$ that is otherwise disjoint from $A \cup B$ is called an \emph{$A$-$B$-path}.
If $A = \{ u \}$ and $B = \{ v \}$, we also call $P$ \emph{a $u$-$v$-path}.
Let $a,b \in V(P)$ for a path $P$, then we denote by $aPb$ the unique $a$-$b$-path in $P$.
The following classic result connects the notions of $k$-connectivity and $A$-$B$-paths.

\begin{proposition}[Menger~\cite{Menger1927Zur}]\label{thm:menger}
    Let $k$ be a positive integer and let $G$ be a graph.
    Then $G$ is $k$-connected if and only if there exist $k$ pairwise disjoint $A$-$B$-paths in $G$.
\end{proposition}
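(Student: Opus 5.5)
The statement is the classical theorem of Menger~\cite{Menger1927Zur}, and it is abbreviated: as written, $A$ and $B$ are not quantified. I read it in its standard form. A graph $G$ on at least $k+1$ vertices is $k$-connected if and only if, for all $A,B \subseteq V(G)$ with $|A|,|B| \geq k$, there exist $k$ pairwise disjoint $A$-$B$-paths. The plan is to derive this from the set version of Menger's theorem:
\[
\max\{\text{number of disjoint } A\text{-}B\text{-paths}\} \;=\; \min\{|S| : S \subseteq V(G) \text{ meets every } A\text{-}B\text{-path}\}.
\]
The inequality "max $\leq$ min" is immediate, since each separating set must contain a distinct vertex of each path. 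The real work is the reverse inequality.

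For "max $\geq$ min" I would use the short induction on $|E(G)|$ (Göring's argument). Let $k$ be the minimum size of an $A$-$B$-separating set.
\begin{itemize}
\item If $G$ has no edges, then $A \cap B$ is the unique minimal separator, and its vertices are trivial paths.
\item Otherwise pick an edge $e = xy$ and consider $G - e$. If $G-e$ still needs $k$ vertices to separate $A$ from $B$, we are done by induction.
\item If not, there is a set $S$ of size $k-1$ separating $A$ from $B$ in $G-e$. Then both $S_x = S\cup\{x\}$ and $S_y = S\cup\{y\}$ separate $A$ from $B$ in $G$, so each has size $k$.
\item Apply induction in $G-e$ to the pairs $(A,S_x)$ and $(S_y,B)$. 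Each still needs $k$ vertices to separate: any smaller separator, together with $S$ or $e$, would yield an $A$-$B$-separator in $G$ of size less than $k$.
\item This gives $k$ disjoint $A$-$S_x$-paths and $k$ disjoint $S_y$-$B$-paths. Glue them along $S$, and use the edge $e$ to join the path ending at $x$ with the path starting at $y$.
\item Check that the glued paths are disjoint. This holds because $S_x$ separates, so the two families can only meet in $S\cup\{x,y\}$.
\end{itemize}
I expect this disjointness bookkeeping in the gluing to be the main technical step.

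The connectivity statement then follows by routine arguments in both directions.
\begin{itemize}
\item Suppose $G$ is $k$-connected and $|A|,|B| \geq k$. Any set $S$ meeting all $A$-$B$-paths with $|S|<k$ would have to contain all of $A$ or all of $B$, which is impossible by size, or else be a separator, contradicting $k$-connectivity. Hence the minimum is at least $k$, and the set version yields $k$ disjoint paths.
\item Conversely, suppose $S$ is a separator with $|S|<k$. Take vertices $a,b$ in different components of $G-S$, and set $A = S\cup\{a\}$ and $B = S\cup\{b\}$, padding both with vertices from the respective components if needed to reach size $k$. Then $S$ meets every $A$-$B$-path except those starting at $a$ or $b$, and those must cross $S$. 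So at most $|S|<k$ disjoint $A$-$B$-paths exist, contradicting the hypothesis.
\end{itemize}
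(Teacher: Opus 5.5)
The paper does not prove this proposition; it only cites Menger. Your reading of the garbled statement, with $|V(G)|\geq k+1$ and $A,B$ ranging over all vertex sets of size at least $k$, is a sensible one, and the overall plan is fine. The inductive step, however, has a genuine gap at the sentence ``each still needs $k$ vertices to separate''.

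The names $x,y$ are fixed before $S$ is found, and for one of the two labelings the claim is false. Take the path $a\,y\,x\,b$ with $A=\{a\}$, $B=\{b\}$, $k=1$ and $e=xy$. In $G-e$ the set $S=\emptyset$ separates $A$ from $B$, and $S_x=\{x\}$, $S_y=\{y\}$ both separate $A$ from $B$ in $G$. Yet $G-e$ contains no $A$-$S_x$-path, so the minimum $A$-$S_x$-separator there is empty.

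The missing idea is an orientation of $e$. Since $|S|<k$, the graph $G-S$ has an $A$-$B$-path, and that path must use $e$. Let $R_A$ and $R_B$ be the sets of vertices reachable from $A\setminus S$, respectively $B\setminus S$, in $G-e-S$; these sets are disjoint. One end of $e$ lies in $R_A$ and the other in $R_B$, and you must label them so that $x\in R_A$ and $y\in R_B$.

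With this choice the step is correct, but for a different reason than ``together with $S$ or $e$''. Suppose $T$ separates $A$ from $S_x$ in $G-e$, and let $P$ be an $A$-$B$-path of $G$. The initial segment of $P$ up to its first vertex in $S_x$ cannot use $e$: it would have to end with $y\,x$, which places $y$ in $R_A$. So this segment is an $A$-$S_x$-path of $G-e$ and meets $T$. Hence $T$ separates $A$ from $B$ in $G$ and $|T|\geq k$. The $(S_y,B)$ side is symmetric.

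The same orientation is what makes the gluing disjoint; the reason is not that ``$S_x$ separates''. Because $|S_x|=|S_y|=k$, every vertex of $S_x$ is the last vertex of exactly one $A$-$S_x$-path, and every vertex of $S_y$ is the first vertex of exactly one $S_y$-$B$-path. All vertices of the $A$-$S_x$-paths outside $S$, including $x$, lie in $R_A$. All vertices of the $S_y$-$B$-paths outside $S$, including $y$, lie in $R_B$. So the two families meet only in their common ends in $S$, and $xy$ joins the path ending at $x$ to the path starting at $y$. The contraction version of Göring's proof avoids the issue altogether by using a single $k$-set $X$ containing both $x$ and $y$, so that no $A$-$X$-path can use $e$.

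The converse direction also needs a small repair. Padding $A$ and $B$ ``with vertices from the respective components'' can be impossible. For example, let $G$ be the disjoint union of $K_1$ and $K_n$ with $n\geq k\geq 2$, and $S=\emptyset$; then one of $a,b$ is the isolated vertex, whose component has one vertex.

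Instead, first enlarge $S$. As $|V(G)|\geq k+1$, there are at least $k-1-|S|$ vertices outside $S\cup\{a,b\}$, and adding any of them to $S$ keeps $a$ and $b$ in different components. Once $|S|=k-1$, the sets $A=S\cup\{a\}$ and $B=S\cup\{b\}$ have size exactly $k$. Then $S$ meets every $A$-$B$-path, so there are at most $k-1$ disjoint ones.
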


Given a graph $G$ and an edge $e \in E(G)$, we say that $G'$ is constructed by \emph{subdividing $e$} if $G'$ is derived from $G$ by replacing $e$ with a path of length two, with the internal vertex of this path being the \emph{subdivision vertex}.
We say that $H$ is a \emph{subdivision of $G$} if $H$ is derived from $G$ by repeatedly subdividing edges.
In a related notion, we say that $G'$ is a \emph{bisubdivision of $G$}, if we can construct $G'$ from $G$ by replacing the edges of $G$ with pairwise internally disjoint odd paths.
Note that since the path with two vertices is odd, $G$ is a bisubdivision of itself.

\paragraph{Planar graphs.}
A graph $G$ is called \emph{planar} if there exists a drawing $D$ of it into the plane such that no two edges of $G$ intersect each other in anything but potentially their endpoints.
We call such a drawing $D$ a \emph{plane drawing (of $G$)}.
When deleting $D$ from the plane, the resulting connected regions are called the \emph{faces} of $G$.
Whitney proved that 3-connected, planar graphs have essentially unique drawings~\cite{Whitney19332Isomorphic}.
Thus as long as we are dealing with 3-connected, planar graphs we can tacitly assume that they are directly associated with a drawing.
Furthermore, for any 2-connected, planar graph $G$, Whitney showed that vertices and edges found on the boundary of the closure of any given face of $G$ correspond to a cycle in $G$~\cite{Whitney1932Nonseparable}.
We call the cycles corresponding to faces in a 2-connected, planar graph its \emph{facial cycles}.

Famously, planar graphs can be characterised by excluding subdivisions of two special graphs.

\begin{proposition}[Kuratowski~\cite{Kuratowski1930Probleme}]\label{thm:kuratowski}
    A graph is planar if and only if it does not contain a subdivision of $K_5$ or $K_{3,3}$.
\end{proposition}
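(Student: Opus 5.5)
We prove the two directions separately. The harder one, sufficiency, we reduce to the 3-connected case and then handle by contracting an edge, following Thomassen's approach.

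\emph{Necessity.} Planarity is closed under taking subgraphs. It is also closed under suppressing subdivision vertices: in a plane drawing, the path replacing an edge can be redrawn as a single curve. So it suffices to show that $K_5$ and $K_{3,3}$ are not planar. For a connected plane graph with $n$ vertices, $m$ edges and $f$ faces, Euler's formula gives $n - m + f = 2$.
\begin{itemize}
\item In a simple plane graph with $m \geq 3$, every face is bounded by at least $3$ edges. Double counting edge--face incidences gives $2m \geq 3f$, hence $m \leq 3n-6$. For $K_5$ we have $10 > 9$, so $K_5$ is not planar.
\item In a bipartite plane graph every face has length at least $4$. This gives $2m \geq 4f$, hence $m \leq 2n-4$. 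For $K_{3,3}$ we have $9 > 8$, so $K_{3,3}$ is not planar.
\end{itemize}

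\emph{Sufficiency.} Take a counterexample $G$ with as few edges as possible: $G$ is non-planar and contains no subdivision of $K_5$ or $K_{3,3}$. We show in turn that $G$ is 2-connected and then 3-connected.
\begin{itemize}
\item \emph{$G$ is 2-connected.} Suppose not. Each block of $G$ is a proper subgraph, so by minimality each block is planar. Plane drawings of the blocks can be glued at cut-vertices, because any vertex can be moved to the outer face of its block's drawing. This makes $G$ planar, a contradiction.
\item \emph{$G$ is 3-connected.} Suppose $\{x,y\}$ is a 2-separator. Split $G$ along it into sides $G_1$ and $G_2$, and add the edge $xy$ to each side. If some $G_i + xy$ contained a Kuratowski subdivision, we could replace the edge $xy$ by an $x$-$y$-path through the other side. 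That would give a Kuratowski subdivision in $G$, which is impossible. By minimality both $G_i + xy$ are therefore planar. We can draw each with $xy$ on the outer face, and glue the two drawings along $xy$. This makes $G$ planar, again a contradiction.
\end{itemize}

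\emph{Contracting an edge.} Next we use the lemma that every 3-connected graph on at least five vertices has an edge $e = uv$ such that $G/e$ is 3-connected. Small cases are checked directly. The contraction $G/e$ contains no Kuratowski subdivision; otherwise we could uncontract $e$ and find one in $G$. This step needs a short case analysis, because the contracted vertex may be a branch vertex of degree $4$ in a $K_5$-subdivision. By minimality, $G/e$ is therefore planar.

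\emph{Completing the argument.} Delete the contracted vertex $w$ from the plane drawing of $G/e$. Since $G/e$ is 3-connected, the face that contained $w$ is bounded by a cycle $C$. The neighbours of $u$ and of $v$ all lie on $C$. Consider how these two neighbour sets interleave around $C$. In the benign case, the neighbours of $v$ lie in a single segment of $C$ between two consecutive neighbours of $u$. Then we can place $u$ and $v$ inside the face and obtain a plane drawing of $G$. Otherwise one of two configurations occurs:
\begin{itemize}
\item $u$ and $v$ have three common neighbours on $C$, which yields a $K_5$-subdivision; or
\item their neighbours alternate as $u, v, u, v$ around $C$, which yields a $K_{3,3}$-subdivision.
\end{itemize}
Either way we contradict the choice of $G$.

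The main obstacle is the existence of a contractible edge in 3-connected graphs. We prove it by contradiction. Suppose that for every edge $xy$ there is a vertex $z$ such that $\{x,y,z\}$ separates $G$. Choose the edge $xy$, the vertex $z$, and a component $D$ of $G - \{x,y,z\}$ so that $|D|$ is as small as possible. A neighbour of $z$ in $D$, together with its own separating vertex, then yields a strictly smaller component, which contradicts the choice of $D$.
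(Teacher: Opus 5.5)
The paper gives no proof of this statement. Kuratowski's theorem is quoted from the literature and used only as a black box, in the proof that every matching covered, Pfaffian, bipartite, cycle-conformal graph is planar, to extract a subdivision of $K_5$ or $K_{3,3}$ from a non-planar graph. So only your ``sufficiency'' direction is ever invoked, and there is no in-paper argument to compare against.

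Your outline is a correct version of Thomassen's standard proof, and I see no genuine gap. Its steps are: an edge-minimal counterexample, reduction to the 3-connected case, a contractible edge, and re-inserting $u,v$ into the face of $G/e-w$ bounded by a cycle.

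When you write it out, a few steps need an explicit sentence:
\begin{itemize}
\item For a 2-separator $\{x,y\}$, each side has at least two edges the other lacks, namely edges from $x$ and $y$ into a component of $G-\{x,y\}$. So $G_i+xy$ really has fewer edges than $G$.
\item In the contractible-edge lemma, you need that every vertex of a 3-separator of a 3-connected graph has a neighbour in every component. You should also take $D'$ to be a component of $G-\{z,v,w\}$ that avoids the adjacent pair $x,y$; this forces $D'\subseteq D-v$.
\item When uncontracting, a two--two split of the branches at a degree-4 branch vertex of a $K_5$-subdivision gives a $K_{3,3}$-subdivision. In it, $u$ is grouped with the two branch vertices reached from $v$, and $v$ with the two reached from $u$.
\item In the final step, the alternating quadruple must be allowed to contain common neighbours of $u$ and $v$. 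This is what covers the case where $v$'s only neighbours on $C$ are two non-consecutive neighbours of $u$, which ``$v$ has a neighbour strictly inside some segment'' does not cover.
\end{itemize}
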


\paragraph{Matching theory.}
For any positive $k \in \mathbb{N}$, a graph $G$ is called \emph{$k$-extendable} if it has at least $2k+2$ vertices and for any matching $N$ of order $k$ there exists a perfect matching $M$ of $G$ with $N \subseteq M$.
The following useful result stems from the article introducing $k$-extendability.

\begin{proposition}[Plummer~\cite{Plummer1980$n$extendable}]\label{thm:extendabilitybasics}
    For all positive integers $k$, any $k$-extendable graph is $(k-1)$-extendable and $(k+1)$-connected.
\end{proposition}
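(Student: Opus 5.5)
We prove Plummer's result, that a $k$-extendable graph is $(k-1)$-extendable and $(k+1)$-connected, in two independent parts.

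\textbf{Part 1: $(k-1)$-extendability.} For $k=1$ there is nothing to show beyond the existence of a perfect matching, which follows since $G$ has an edge and that edge extends. So let $k \geq 2$ and let $N'$ be a matching of size $k-1$. We aim to find an edge $e$ such that $N' \cup \{e\}$ is a matching of size $k$; $k$-extendability then yields a perfect matching containing $N' \cup \{e\} \supseteq N'$.

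\begin{itemize}
\item $G$ has at least $2k+2$ vertices, and $N'$ covers only $2k-2$ of them, so at least four vertices are uncovered.
\item If some edge joins two uncovered vertices, we take it as $e$.
\item Otherwise, $G$ has a perfect matching $M$: apply $k$-extendability to any matching of size $k$, which exists by iterating this argument or by a direct argument.
\item $M$ matches each uncovered vertex $u$ to a vertex covered by $N'$, since no edge joins two uncovered vertices. Thus $M$ has at least four edges from uncovered vertices into $V(N')$.
\end{itemize}

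The cleanest route is to swap. Take an edge $xy \in N'$ and uncovered vertices $u,v$ with $ux, vy \in M$. Replacing $xy$ in $N'$ by $ux$ and $vy$ gives a matching $N''$ of size $k$. Its extension $M'$ is a perfect matching, and $M' \triangle$ (suitable edges) recovers $N'$? This does not work directly.

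Instead, we argue by counting. Consider the symmetric difference of $M$ with $N'$: every alternating component is a path or a cycle. An $M$-alternating path starting at an uncovered vertex $u$ goes $u \to x$ (via an $M$-edge), then $x \to y$ (via an $N'$-edge), and so on, ending at an uncovered vertex. In $M$, each $N'$-covered vertex has its $M$-partner either uncovered or covered. The number of $M$-edges with both ends uncovered is zero. Hence all at least $2s \geq 4$ uncovered vertices are matched by $M$ into $V(N')$, which has $2k-2$ vertices. Therefore at most $2k-2-2s$ vertices of $V(N')$ are matched within $V(N')$, so $M$ contains at most $k-1-s$ edges inside $V(N')$.

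\textbf{Part 2: $(k+1)$-connectivity.} The vertex count is at least $2k+2 \geq k+2$. Suppose $S$ is a separator with $|S| \leq k$. We take a matching $N$ of size $|S|$, or $k$ padded out, that saturates $S$ using edges from $S$ into a single chosen component $C$ of $G-S$. We then extend $N$ to a perfect matching $M$. In $M$, every vertex of $S$ is matched into $C$, so the other components must be perfectly matched internally.

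We then choose which component to send $S$ into so as to force a parity contradiction. Using Part 1 and induction, $G$ is $k$-connected, so every vertex of $S$ has a neighbour in each component; this gives the needed flexibility. If two components $C_1$ and $C_2$ both have even order, we match one vertex of $S$ into $C_1$ and the rest into $C_2$. This leaves $C_1$ with an odd number of remaining vertices, which cannot be perfectly matched internally, a contradiction. If some component has odd order, we send all of $S$ into a different component, which again yields a contradiction.

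When $|S|<k$, we pad $N$ with extra edges inside a component, chosen so as to keep the parity argument intact. The main obstacle is the padding and the case analysis when components are small.
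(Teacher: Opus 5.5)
Neither part of your proposal is completed.

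\textbf{Part 1.} Once you assume that no edge joins two vertices of the uncovered set $U$, your count only shows that $M$ has at most $k-1-s$ edges inside $V(N')$, i.e.\ that $|U|\le 2k-2$. That is a contradiction for $k=2$, but for $k\ge 3$ it is compatible with $|V(G)|\ge 2k+2$, so nothing has been proved.

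The missing step uses exactly the alternating path you describe and then abandon. The component $P$ of $M\triangle N'$ through some $u\in U$ is a path that ends at another $z\in U$ and starts and ends with $M$-edges. Hence $N'\triangle E(P)$ is a matching of size $k$ covering $V(N')\cup\{u,z\}$. Extend it using $k$-extendability. The at least two vertices of $U\setminus\{u,z\}$ can then only be matched to each other, so $G$ has an edge $e$ inside $U$, and $N'\cup\{e\}$ is the $k$-matching you wanted.

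Separately, your justification that $G$ has a perfect matching (``by iterating this argument or by a direct argument'') is circular. You must invoke Plummer's standing hypotheses that $G$ is connected and has a perfect matching. Under the paper's literal definition these do not follow, and the proposition is in fact false: $K_{1,5}$ is vacuously $2$-extendable but not $1$-extendable, and two disjoint copies of $K_{2k+2}$ are $k$-extendable but disconnected.

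\textbf{Part 2.} The matchings you rely on need not exist. Minimality of $S$ gives each vertex of $S$ \emph{some} neighbour in every component. It does not give a matching saturating $S$, or $S$ minus one vertex, into one prescribed component. Hall's condition fails as soon as that component has fewer than $|S|$ vertices, or several vertices of $S$ share a single neighbour there. You flag this as ``the main obstacle'' and leave it open, so Part 2 is a plan rather than a proof. The padding for $|S|<k$ is moot, since your induction already excludes that case.

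Your parity idea does work in the base case $k=1$: there only single edges from a cut vertex into two components are needed, and this is where connectivity of $G$ enters.

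For the inductive step, avoid prescribed saturating matchings altogether:
\begin{itemize}
\item With $G$ already $k$-connected, $|S|=k$, and every $s\in S$ has a neighbour in each component of $G-S$.
\item Since $|V(G)\setminus S|\ge k+2$, you can pick $s\in S$ and a neighbour $c\notin S$ of $s$ such that $G-S-c$ is still disconnected.
\item $G-s-c$ is $(k-1)$-extendable (add $sc$ to any $(k-1)$-matching and extend) and has at least $2k$ vertices.
\item $G-s-c$ is connected. Otherwise its components are even (it has a perfect matching), and by $2$-connectivity of $G$ there are edges $sd_1$, $cd_2$ into two distinct components $D_1,D_2$. These form a $2$-matching whose extension would have to match the odd set $V(D_1)\setminus\{d_1\}$ internally.
\item $S\setminus\{s\}$ is a separator of order $k-1$ in $G-s-c$, contradicting the induction hypothesis.
\end{itemize}
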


When considering braces, their characterisation via $k$-extendability offers a helpful alternative to having to argue via the absence of tight cuts, which we will be applying implicitly throughout this article.
The following is a consequence of results from~\cite{Plummer1986Matching} and~\cite{Lovasz1987Matching}.

\begin{proposition}
    A graph is a brace if and only if it is $C_4$, or it is bipartite and 2-extendable.
\end{proposition}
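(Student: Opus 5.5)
We prove both directions directly from the tight cut definition of a brace, as a bipartite matching covered graph without non-trivial tight cuts. The tool is the usual description of tight cuts in bipartite graphs. Let $G$ be bipartite and matching covered with colour classes $A,B$, so $|A|=|B|$. Suppose $X\subseteq V(G)$ satisfies $|X\cap A|=|X\cap B|+1$ and $N(X\cap B)\subseteq X\cap A$. Then every perfect matching matches $X\cap B$ into $X\cap A$, and the one remaining vertex of $X\cap A$ is matched across the cut. Hence $\partial(X)$ is tight. The same holds with the roles of $A$ and $B$ swapped. We also use the converse: every tight cut in a bipartite matching covered graph has a shore of this form. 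This follows by counting $|M\cap\partial(X)|=1$ for every perfect matching $M$ together with matching coveredness.

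\emph{From 2-extendable to brace.} Let $G$ be bipartite and 2-extendable. By \zcref{thm:extendabilitybasics}, $G$ is 1-extendable, hence matching covered, and $G$ is $3$-connected. Suppose $\partial(X)$ is a non-trivial tight cut, with shore $X$ as above. Write $k=|X\cap B|$. Since $|X|\ge 3$ we get $k\ge1$, so $|X\cap A|\ge2$. Symmetrically $|\overline{X}\cap B|=|\overline{X}\cap A|+1\ge 2$. Every edge of $\partial(X)$ joins $X\cap A$ to $\overline{X}\cap B$.

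We claim $\partial(X)$ contains two disjoint edges. If not, K\H{o}nig's theorem gives a single vertex $v$ covering $\partial(X)$. Both sides of the cut contain vertices other than $v$, so $v$ would be a cut-vertex, contradicting $3$-connectivity. Extending two disjoint cut edges to a perfect matching $M$ gives $|M\cap\partial(X)|\ge2$, contradicting tightness. Finally, $C_4$ is a brace, since it is matching covered and every cut $\partial(X)$ with $|X|$ odd is trivial.

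\emph{From brace to 2-extendable.} Let $G$ be a brace. If $|V(G)|=4$, then $G$ is a bipartite matching covered graph on four vertices, which forces $G=C_4$. So assume $|V(G)|\ge 6$. Being matching covered gives 1-extendability, so it remains to extend an arbitrary pair of disjoint edges $a_1b_1,a_2b_2$ with $a_i\in A$, $b_i\in B$.

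Suppose $G'=G-\{a_1,b_1,a_2,b_2\}$ has no perfect matching. By Hall's theorem there is a set $S\subseteq A\setminus\{a_1,a_2\}$ with $|N(S)\setminus\{b_1,b_2\}|<|S|$. Now $G-\{a_1,b_1\}$ has a perfect matching, because $a_1b_1$ lies in one. Applying Hall there, $|N(S)\setminus\{b_1\}|\ge|S|$, and symmetrically $|N(S)\setminus\{b_2\}|\ge|S|$. Together with the deficiency in $G'$, these force $b_1,b_2\in N(S)$ and $|N(S)|=|S|+1$.

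Set $Y=S\cup N(S)$. Then $|Y\cap B|=|Y\cap A|+1$ and $N(Y\cap A)=N(S)\subseteq Y$. By the observation above, with colours swapped, $\partial(Y)$ is tight. It is non-trivial: $|Y|\ge 3$ since $S\neq\emptyset$, and $\overline{Y}$ contains $a_1,a_2$, so $|\overline{Y}|\ge2$; since $|\overline{Y}|$ is odd, $|\overline{Y}|\ge3$. This contradicts $G$ being a brace, so $G'$ has a perfect matching and $G$ is 2-extendable.

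The only delicate point is the Hall deficiency bookkeeping in the last direction. One must use the perfect matchings of both $G-\{a_1,b_1\}$ and $G-\{a_2,b_2\}$ to pin down $|N(S)|=|S|+1$ exactly, with $b_1,b_2\in N(S)$. This exactness is what makes $Y$ a tight shore whose complement contains both $a_1$ and $a_2$, and hence is non-trivial. The first direction is routine once $3$-connectivity from \zcref{thm:extendabilitybasics} is available.
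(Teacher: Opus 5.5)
Your argument is correct. It does not follow a proof in the paper, because the paper gives none: it only states the proposition as a consequence of results of Plummer and Lov\'asz. What you supply is a self-contained replacement for that citation, essentially the standard Hall-type argument.

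\begin{itemize}
\item You identify tight cuts of a bipartite matching covered graph with cuts around shores $X$ satisfying $|X\cap A|=|X\cap B|+1$ and $N(X\cap B)\subseteq X\cap A$.
\item For ``2-extendable $\Rightarrow$ brace'', you observe that in a 2-connected bipartite graph, a cut with both shores of size at least two contains two disjoint edges. 2-extendability then gives a perfect matching meeting the cut twice.
\item For ``brace $\Rightarrow$ 2-extendable'', you squeeze a Hall obstruction $S$ for $G-\{a_1,b_1,a_2,b_2\}$ between Hall's condition in $G-\{a_1,b_1\}$ and in $G-\{a_2,b_2\}$. This forces $|N(S)|=|S|+1$ with $b_1,b_2\in N(S)$, so $S\cup N(S)$ is a tight shore whose complement contains $a_1$ and $a_2$, hence non-trivial.
\end{itemize}

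The bookkeeping checks out in both directions. Your route buys transparency: it needs only Hall/K\H{o}nig and a connectivity bound (2-connectivity already suffices for the K\H{o}nig step). It also shows exactly where the $C_4$ exception and the six-vertex threshold come from. The citation buys brevity.

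You should make one assumption explicit. Two of your steps rely on the convention that matching covered and $k$-extendable graphs are connected: the claim ``four vertices forces $G=C_4$'', and the 3-connectivity you import from Plummer's proposition. The paper's literal definitions omit connectivity, and under them the proposition fails:
\begin{itemize}
\item $2K_2$ is bipartite, matching covered and has no non-trivial tight cut, yet it is neither $C_4$ nor 2-extendable.
\item $2K_{3,3}$ is bipartite and ``2-extendable'', but it has a non-trivial tight cut: the cut around one vertex of the first copy together with all of the second copy.
\end{itemize}
This is a definitional gap in the paper rather than in your reasoning, and stating the connectivity assumption closes it.
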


Regarding tight cuts, we want to briefly mention the following two useful folklore results.

\begin{lemma}[folklore]\label{lem:tightcutpreservematcov}
    Let $G$ be a matching covered graph with a tight cut $\partial(X)$.
    Then the two tight cut contractions corresponding to $\partial(X)$ are matching covered.
    If $G$ is also bipartite then so are the two tight cut contractions corresponding to $\partial(X)$.
\end{lemma}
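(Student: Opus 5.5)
To prove \zcref{lem:tightcutpreservematcov}, let $G$ be matching covered with a tight cut $\partial(X)$. By symmetry it suffices to treat $G_1 = G[\overline{X} \mapsto c]$, whose vertex set is $X \cup \{c\}$. Its edges are the edges of $G[X]$ together with one edge $xc$ for each $x \in X$ that has a neighbour in $\overline{X}$, with parallel edges merged.

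The key step is to show that perfect matchings of $G$ project to perfect matchings of $G_1$. Let $M$ be a perfect matching of $G$. Since the cut is tight, $M \cap \partial(X) = \{e\}$ for a single edge $e = xy$ with $x \in X$ and $y \in \overline{X}$. Every vertex of $X \setminus \{x\}$ is then matched by $M$ inside $G[X]$. Hence the edges of $M$ inside $X$, together with the image $xc$ of $e$, form a perfect matching $M_1$ of $G_1$. In particular $G_1$ has a perfect matching.

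Matching coveredness follows directly from this projection. Take any edge $f$ of $G_1$. If $f$ lies in $G[X]$, it is an edge of $G$; choose a perfect matching $M$ of $G$ containing $f$, and then $M_1$ contains $f$. If instead $f = xc$, it is the image of some cut edge $xy$ with $y \in \overline{X}$. Choose a perfect matching $M$ of $G$ containing $xy$. By tightness, $xy$ is the unique cut edge in $M$, so $M_1$ contains $xc$.

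Two further conditions need checking. The definition of matching covered requires at least four vertices, and it often also requires connectivity. Connectivity of $G_1$ follows from that of $G$, since contracting a connected or arbitrary set preserves connectivity. For the vertex count, we use the convention that tight cut contractions are taken for non-trivial tight cuts, so that $|\overline{X}| \geq 2$ and $|X| \geq 2$. Since $|M \cap \partial(X)| = 1$, $|X|$ is odd, so $|X| \geq 3$ and $|V(G_1)| = |X| + 1 \geq 4$. For a trivial cut the contraction is isomorphic to $G$, so the claim holds trivially.

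For bipartiteness, let $(A,B)$ be the bipartition of $G$ and take any perfect matching $M$ of $G$. The set $X$ is matched within itself except for exactly one vertex, so $|X \cap A| - |X \cap B| = \pm 1$. Say $|X \cap A| = |X \cap B| + 1$, the other case being symmetric. We claim that every edge of $\partial(X)$ has its end in $X$ lying in $A$.

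Suppose instead that some cut edge $e = bz$ has $b \in X \cap B$. Take a perfect matching $M$ containing $e$. Then $M - e$ perfectly matches $X \setminus \{b\}$ within $G[X]$. But $X \setminus \{b\}$ has $|X \cap B| + 2$ vertices in $A$ and only $|X \cap B| - 1$ in $B$, which is impossible. So every vertex of $X$ incident with the cut is in $A$. Therefore $c$ is adjacent only to vertices of $A$, and $(A \cap X, (B \cap X) \cup \{c\})$ is a bipartition of $G_1$. The argument for $G[X \mapsto c]$ is the same with $X$ and $\overline{X}$ swapped.

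I do not expect a real obstacle here. The only points needing care are the vertex-count convention and this parity argument, which guarantees that all cut edges attach to a single colour class on each side.
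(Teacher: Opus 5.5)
The paper states this lemma as folklore and gives no proof, so there is no argument of theirs to compare with. Your argument is the standard one and is correct for non-trivial tight cuts. You project each perfect matching of $G$ onto a shore, where tightness leaves exactly one cut edge, which becomes the edge at $c$. For bipartiteness, you use the colour count on $X$ to show that every cut edge meets $X$ in its majority colour class.

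Two small slips need fixing.

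\emph{Trivial cuts.} Your remark that for a trivial cut ``the contraction is isomorphic to $G$'' is only half right. If $X=\{x\}$, then $G[X\mapsto c]\cong G$, but $G[\overline{X}\mapsto c]$ is $K_2$. That graph fails the paper's requirement of at least four vertices. So under the paper's definitions the lemma is actually false for trivial tight cuts. Restricting to non-trivial cuts is therefore necessary, not a convenience; alternatively, you could adopt a convention under which $K_2$ counts as matching covered.

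\emph{The bipartite count.} Removing $b\in B$ does not change the $A$-count, so $X\setminus\{b\}$ has $|X\cap A|=|X\cap B|+1$ vertices in $A$, not $|X\cap B|+2$. The imbalance is therefore $2$ rather than $3$, which still gives the contradiction.
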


\begin{lemma}[folklore]\label{lem:findbrickbrace}
    Let $G$ be a matching covered graph.
    Then there exists a tight cut in $G$ such that one of the two tight cut contractions corresponding to it is a brick or brace of $G$.
\end{lemma}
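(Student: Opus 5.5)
We prove \zcref{lem:findbrickbrace} by choosing a tight cut whose shore is as small as possible. If $G$ has no non-trivial tight cut, then $G$ is itself a brick or a brace. Any trivial cut $\partial(\{v\})$ is tight, and one of its two contractions is $G$ itself up to isomorphism, so this trivial cut witnesses the claim. We may therefore assume that $G$ has a non-trivial tight cut.

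Among all non-trivial tight cuts $\partial(X)$ of $G$, we choose one that minimises $|X|$, where we may swap $X$ and $\overline{X}$ as needed. We set $G_1 = G[\overline{X} \mapsto c]$. This graph has vertex set $X \cup \{c\}$, and it is matching covered by \zcref{lem:tightcutpreservematcov}. We claim that $G_1$ has no non-trivial tight cut, and hence is a brick or a brace. Since Lov\'asz's decomposition is unique and $G_1$ appears in some run of the procedure, $G_1$ is then a brick or brace of $G$.

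To prove the claim, suppose that $\partial(Y)$ is a non-trivial tight cut of $G_1$. Replacing $Y$ by its complement in $V(G_1)$ if necessary, we may assume $c \notin Y$, so $Y \subseteq X$. Non-triviality gives $|Y| \geq 2$ and $|V(G_1) \setminus Y| \geq 2$, so $Y \subsetneq X$. The key step is the standard fact that $\partial_G(Y)$ is tight in $G$. Take any perfect matching $M$ of $G$. Because $\partial(X)$ is tight, exactly one edge $e$ of $M$ lies in $\partial(X)$. Contracting $\overline{X}$ turns $M$ restricted to $G[X]$ together with $e$ into a perfect matching $M_1$ of $G_1$, with $e$ becoming an edge at $c$. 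Since $Y \subseteq X$, every edge of $\partial_G(Y)$ corresponds to an edge of $\partial_{G_1}(Y)$. Edges of $G$ that become parallel after contraction all meet $c$, and $M$ contains only the single such edge $e$. Hence $|M \cap \partial_G(Y)| = |M_1 \cap \partial_{G_1}(Y)| = 1$.

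So $\partial_G(Y)$ is tight in $G$. It is non-trivial because $|Y| \geq 2$ and $|\overline{Y}| \geq |\overline{X}| + 1 \geq 2$. But $|Y| < |X|$, which contradicts the minimality of $|X|$. The only delicate step is the correspondence of perfect matchings under contraction; it is routine once $c \notin Y$ is arranged, so we expect no real obstacle.
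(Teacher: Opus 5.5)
Your proof is correct. The paper gives no proof of this lemma and simply cites it as folklore. Your minimal-shore argument is the standard one: choose a non-trivial tight cut $\partial(X)$ with $|X|$ minimum, then show that any non-trivial tight cut $\partial(Y)$ of $G[\overline{X}\mapsto c]$ with $c\notin Y$ lifts to a non-trivial tight cut of $G$ with a strictly smaller shore.

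Two small remarks. First, parity of tight-cut shores gives $|X|\geq 3$, so $G_1$ also meets the paper's four-vertex requirement for being matching covered. Second, your argument produces a non-trivial tight cut whenever $G$ is not itself a brick or brace. That is the form the paper actually relies on when it later uses this lemma to split off copies of $C_4$.
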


Next, we properly introduce a key notion mentioned in the introduction.
A graph with a perfect matching is called \emph{Pfaffian} if there exists an orientation of its edges such that each conformal, even cycle has an odd number of edges oriented in the same direction in either direction of traversal.
The first major result concerning Pfaffian graphs is due to Kasteleyn.

\begin{proposition}[Kasteleyn~\cite{Kasteleyn1963Dimer}]\label{thm:planarpfaff}
    All planar graphs with perfect matchings are Pfaffian.
\end{proposition}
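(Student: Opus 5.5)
The statement to prove is Kasteleyn's theorem (\zcref{thm:planarpfaff}): every planar graph $G$ with a perfect matching has an orientation in which every conformal even cycle is oddly oriented, i.e.\ has an odd number of edges directed the same way in either direction of traversal. I would use Kasteleyn's classical facial-parity argument.

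First, reduce to a convenient setting. Since $G$ has a perfect matching and only the existence of an orientation is at stake, we may add edges while keeping the graph planar. Every conformal cycle of $G$ stays conformal in the larger graph, and restricting an orientation to a subgraph preserves the parity on cycles of $G$. So we may assume $G$ is connected. Fix a plane drawing of $G$. All cycles are then Jordan curves, and each cycle $C$ bounds a closed disc region with some faces inside it.

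Second, construct the orientation. We want every bounded face whose boundary is a closed walk to have an odd number of edges oriented clockwise. Take a spanning tree $T$ and orient its edges arbitrarily. The edges of $G$ not in $T$ correspond to a spanning tree of the dual, with the outer face as its root. Process the bounded faces leaf-first in this dual tree. Each bounded face, when processed, has exactly one unoriented edge left, namely its dual-tree edge towards the root. We orient that edge so the face receives an odd number of clockwise edges. This is the standard and routine part of the argument.

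Third, and this is the main step, show that every conformal even cycle $C$ is oddly oriented. Let $C$ have $k$ vertices strictly inside it. Let $f$ be the number of bounded faces inside $C$, $e$ the number of edges strictly inside $C$, and $c$ the number of clockwise edges on $C$. Apply Euler's formula to the subgraph consisting of $C$ and its interior:
\[
(|C| + k) - (|C| + e) + (f+1) = 2, \quad\text{so } f = e - k + 1 .
\]
Let $c_i$ be the number of clockwise edges on the boundary of the $i$-th inner face. Summing over the inner faces, each interior edge is clockwise for exactly one of its two adjacent faces, and each edge of $C$ is clockwise for its inner face exactly when it is clockwise on $C$. Hence
\[
\sum_i c_i = c + e .
\]
Every $c_i$ is odd, so $\sum_i c_i \equiv f \pmod 2$. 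Therefore $c \equiv f - e \equiv 1 - k \pmod 2$.

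Finally, use conformality. The graph $G - V(C)$ has a perfect matching. The drawing of $C$ separates the plane, so no edge joins a vertex inside $C$ to a vertex outside $C$. Hence the interior vertices are matched among themselves, so $k$ is even, giving $c$ odd. Since $|C|$ is even, the count in the reverse direction, $|C| - c$, is also odd. So $C$ is oddly oriented.

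The potential obstacle is the claim that each face boundary is a cycle, which is needed to speak of "clockwise" edges on a face and for the counting above. This is exactly why we first made $G$ connected and why we count edges with multiplicity along face boundary walks. Alternatively, one can add edges to make $G$ 2-connected, after which Whitney's result on facial cycles quoted in the paper makes every face boundary a cycle.
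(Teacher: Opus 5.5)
Your proof is correct. The paper does not prove this proposition; it quotes it from Kasteleyn, and what you give is the classical Kasteleyn facial-parity argument (orient along a dual spanning tree so every bounded face has an odd number of clockwise edges, use Euler's formula to get $c \equiv k+1 \pmod 2$, then use conformality to force $k$ even). One detail is worth stating explicitly: applying Euler's formula needs the subgraph formed by $C$ and its interior to be connected. This follows from the connectivity of $G$, since a path from an interior vertex to $C$ cannot leave the disc before reaching $V(C)$.
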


However, the result of most interest to us is the following characterisation of Pfaffian braces.
Let $G_1, G_2, G_3$ be three bipartite graphs, such that their pairwise intersection is a 4-cycle $C$, and we have $V(G_i) \setminus V(C) \neq \emptyset$ for all $i \in [3]$.
Further, let $S \subseteq E(C)$ be some subset of the edges of $C$.
The \emph{trisum of $G_1, G_2, G_3$ at $C$} is a graph $\bigcup_{i=1}^3 G_i - S$.

\begin{proposition}[McCuaig~\cite{McCuaig2004Polyas}, Robertson, Seymour, and Thomas~\cite{RobertsonST1999Permanents}]\label{thm:pfaffian}
    A brace is Pfaffian if and only if it is the Heawood graph (see \zcref{fig:heawood}) or it can be constructed from planar braces by repeated use of the trisum operation.
\end{proposition}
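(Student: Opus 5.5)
This is the characterisation of Pfaffian braces due to McCuaig and to Robertson, Seymour, and Thomas, and in the paper it is only invoked, never reproved. If one wanted to reprove it, I would treat the two directions very differently: sufficiency is a short orientation argument, while necessity is essentially the whole of McCuaig's paper.

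\emph{Sufficiency.} Planar braces are Pfaffian by \zcref{thm:planarpfaff}. For the Heawood graph one exhibits an explicit orientation and checks the finitely many conformal even cycles, preferably up to its automorphism group.

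For the trisum I would argue as follows. Let $G_1,G_2,G_3$ be Pfaffian, with Pfaffian orientations $D_1,D_2,D_3$, pairwise intersecting in the $4$-cycle $C$.
\begin{enumerate}
\item Reverse all edges at a vertex of $G_i$ where needed; this preserves the Pfaffian property. Using such switches, make the three orientations agree on $E(C)$, which is possible since $C$ has only four edges.
\item Take the union of the orientations and delete $S$.
\item Let $Q$ be a conformal cycle of the trisum. If $Q$ lies inside a single $G_i$, it is conformal there and oddly oriented by $D_i$.
\item Otherwise $Q$ crosses $C$. Since $V(C)$ separates the parts $G_i - V(C)$, the cycle $Q$ passes through at most two of these parts, entering and leaving through vertices of $C$.
\item Split $Q$ along $C$ into two cycles, each living in a single $G_i$ and each conformal there; bipartiteness makes the conformality transfer routine.
\item Use the standard parity identity for Pfaffian orientations, which relates the number of forward edges of $Q$ to those of the two pieces plus the shared subpath of $C$. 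This shows $Q$ is oddly oriented.
\end{enumerate}
Steps 4 and 5 need care about which pairs of vertices of $C$ the cycle $Q$ uses, but that is a finite case check.

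\emph{Necessity.} Let $G$ be a Pfaffian brace.
\begin{enumerate}
\item By Little's theorem, $G$ has no conformal bisubdivision of $K_{3,3}$.
\item If $G$ has no $K_{3,3}$-subdivision at all, it is planar by \zcref{thm:kuratowski}, so we may assume it has one.
\item Apply McCuaig's brace generation theorem: every brace other than the few base families arises from a smaller brace by one of a handful of expansion operations, each preserving the brace property.
\item Induct along such a generating sequence, with the aim of showing that every non-planar Pfaffian brace other than the Heawood graph has a $4$-cycle $C$ such that $G - V(C)$ has at least three components. The corresponding pieces then exhibit $G$ as a trisum of smaller Pfaffian braces, and induction finishes the argument.
\end{enumerate}
The key lemma behind step 4 is that adding an edge or expanding a vertex in a way that avoids creating a conformal bisubdivision of $K_{3,3}$ forces one of two outcomes. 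Either the result stays planar, or the operation happens entirely inside one side of a $4$-cycle separation, or the graph is a small exceptional case such as the Heawood graph.

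I expect the main obstacle to be this structural step. It requires a delicate case analysis showing that a non-planar brace with a $K_{3,3}$-subdivision but no conformal bisubdivision of $K_{3,3}$ must have a $4$-cycle separating it into at least three parts, with the Heawood graph as the sole exception. I would first handle the braces that are minimal with a $K_{3,3}$-subdivision and then propagate through McCuaig's generation operations. This is precisely where the original proofs spend their length, so in the paper it is reasonable simply to cite the result.
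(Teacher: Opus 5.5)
The paper gives no proof of this proposition; it is quoted from Robertson, Seymour, and Thomas and from McCuaig. Deferring the necessity direction to those papers is therefore in line with the paper. The sufficiency argument you do sketch, however, rests on a false claim.

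Step 4 asserts that a conformal cycle $Q$ of the trisum meets at most two of the parts $G_i - V(C)$. Here is a counterexample. Glue three cubes along a common face $C = abcd$ with $S = \emptyset$, so that part $j$ is the opposite face $a_j'b_j'c_j'd_j'$, joined to $C$ by $aa_j', bb_j', cc_j', dd_j'$. The $10$-cycle $Q = a\,a_1'\,b_1'\,b\,b_2'\,c_2'\,c\,c_3'\,d_3'\,d\,a$ is conformal, since $\{c_1'd_1', d_2'a_2', a_3'b_3'\}$ is a perfect matching of $G - V(Q)$. Yet $Q$ passes through all three parts, so no split into two pieces can handle it.

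The parity bookkeeping of steps 5--6 is also not uniform. Write $f(X)$ for the number of forward edges of a cycle $X$. Suppose $V(Q) \cap V(C) = \{a,c\}$ and both pieces were closed along the same arc $abc$ and were conformal. You would then get $f(Q) \equiv f(Q_1) + f(Q_2) \equiv 0 \pmod{2}$. In fact conformality forces the complementary arcs $abc$ and $adc$; which piece gets which arc is dictated by a perfect matching of $G - V(Q)$. This gives $f(Q_1) + f(Q_2) \equiv f(Q) + f(C) \pmod{2}$, so everything hinges on $C$ being oddly oriented.

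That is also the real content of step 1. A vertex switch flips an even number of edges of $C$, so the $D_i$ can be made to agree on $C$ exactly because each $D_i$ orients $C$ oddly. That holds because $C$ is conformal in every $G_i$, by $2$-extendability of the braces $G_i$, not because $C$ has four edges.

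The standard repair is to show that $G_0 = G_1 \cup G_2 \cup G_3$ is Pfaffian and then pass to $G = G_0 - S$, since a conformal cycle of $G$ is conformal in $G_0$. Use the fact that an orientation is Pfaffian as soon as every $M$-alternating cycle is oddly oriented for one fixed perfect matching $M$. Choose $M \supseteq \{ab, cd\}$, which exists by $2$-extendability of the $G_i$.

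An $M$-alternating cycle meeting $V(C)$, say at $a$, uses $ab$. After leaving $b$ by a non-matching edge, it returns to $V(C)$ at a vertex of the colour of $a$, that is, at $a$ or $c$. So it either lies in a single $G_i$, or has the form $ab \cup P \cup cd \cup P'$, where $P$ is a $b$--$c$ path and $P'$ a $d$--$a$ path with interiors in two different parts. Then $P \cup \{cd, da, ab\}$ and $P' \cup \{ab, bc, cd\}$ are alternating cycles in their respective summands. The identity above then shows that the cycle is oddly oriented.

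In the necessity sketch, besides the deferred structural theorem, there are two smaller holes.
\begin{enumerate}
\item Kuratowski's theorem alone does not give planarity from the absence of $K_{3,3}$-subdivisions, because bipartite graphs can contain subdivided $K_5$'s. You need that a $3$-connected non-planar graph other than $K_5$ contains a $K_{3,3}$-subdivision.
\item The summands contain all four edges of $C$, whereas $G$ may not. So their Pfaffianness is not inherited by taking subgraphs and needs a separate argument, for instance via matching minors.
\end{enumerate}
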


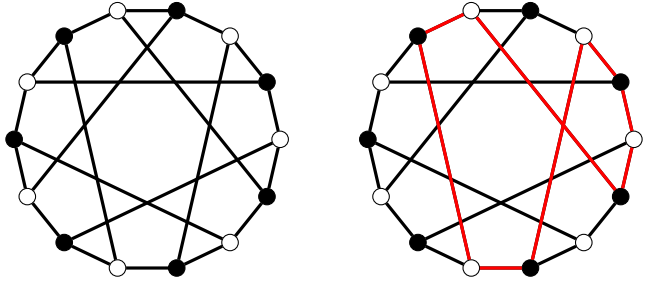
\begin{figure}[ht]
     \centering
            \begin{tikzpicture}[scale=1]
			
			\node (V0) at (0:0) [draw=none] {};
			
			\foreach\i in {1,3,5,7,9,11,13}
			{
				\node (V\i) at ($(V0)+({(360/14 * \i)}:1.75)$) [draw, circle, scale=0.6, fill, label={}] {};
			}
			
			\foreach\i in {2,4,6,8,10,12,14}
			{
				\node (V\i) at ($(V0)+({(360/14 * \i)}:1.75)$) [draw, circle, scale=0.6, label={}] {};
			}
			
			\foreach\i in {1,2,3,4,5,6,7,8,9,10,11,12,13}
			{
				\pgfmathtruncatemacro\iplus{\i+1}
				\path (V\i) edge[very thick] (V\iplus);
			}
			\path (V14) edge[very thick] (V1);
			
			\path
				(V1) edge[very thick] (V6)
				(V2) edge[very thick] (V11)
				(V3) edge[very thick] (V8)
				(V4) edge[very thick] (V13)
				(V5) edge[very thick] (V10)
				(V7) edge[very thick] (V12)
				(V9) edge[very thick] (V14)
			;
			
            \end{tikzpicture}
     \qquad
            \begin{tikzpicture}[scale=1]
			
			\node (V0) at (0:0) [draw=none] {};
			
			\foreach\i in {1,3,5,7,9,11,13}
			{
				\node (V\i) at ($(V0)+({(360/14 * \i)}:1.75)$) [draw, circle, scale=0.6, fill, label={}] {};
			}
			
			\foreach\i in {2,4,6,8,10,12,14}
			{
				\node (V\i) at ($(V0)+({(360/14 * \i)}:1.75)$) [draw, circle, scale=0.6, label={}] {};
			}
			
			\foreach\i in {1,2,3,4,5,6,7,8,9,10,11,12,13}
			{
				\pgfmathtruncatemacro\iplus{\i+1}
				\path (V\i) edge[very thick] (V\iplus);
			}
			\path (V14) edge[very thick] (V1);
			
			\path
				(V1) edge[very thick] (V6)
				(V2) edge[very thick] (V11)
				(V3) edge[very thick] (V8)
				(V4) edge[very thick] (V13)
				(V5) edge[very thick] (V10)
				(V7) edge[very thick] (V12)
				(V9) edge[very thick] (V14)
			;

                \path[red]
				(V2) edge[very thick] (V11)
                    (V11) edge[very thick] (V10)
                    (V10) edge[very thick] (V5)
                    (V5) edge[very thick] (V4)
                    (V4) edge[very thick] (V13)
                    (V13) edge[very thick] (V14)
                    (V14) edge[very thick] (V1)
                    (V1) edge[very thick] (V2)
			;
		\end{tikzpicture}
     \caption{A drawing of two copies of the Heawood graph. On the right a non-conformal, even cycle in the Heawood graph is marked in red.}
     \label{fig:heawood}
\end{figure}

Finally, we briefly discuss matching covered graphs and their role in the characterisation of cycle-conformal graphs.
Note that according to our definition a graph is matching covered if and only if it is 1-extendable.
It is sometimes useful to think of the \emph{cover graph} $\Cov{G}$ of $G = (V(G),E(G))$ which is defined such that $V(\Cov{G}) = V(G)$ and $E(\Cov{G})$ consists of all edges in $E(G)$ that are found in a perfect matching of $G$.

The cover graph of a graph can be computed in $\mathcal{O}(mn)$-time thanks to an algorithm by de Carvalho and Cheriyan~\cite{DeCarvalhoC2005An}, which in particular allows us to check whether a graph is matching covered in $\mathcal{O}(mn)$-time.
This allows us to show that efficient recognition of cycle-conformal graphs can be reduced to matching covered graphs.

\begin{lemma}\label{lem:reducetomatcov}
    Let $G$ be a graph with a perfect matching and let $H_1, \ldots , H_k$ be the components of its cover graph.
    Then $G$ is cycle-conformal if and only if
    \begin{enumerate}
        \item $H_1, \ldots , H_k$ are cycle-conformal, and

        \item no edge in $F \coloneqq E(G) \setminus \bigcup_{i=1}^k E(H_i)$ is contained in an even cycle of $G$.
    \end{enumerate}
    In particular, if we know that $H_1, \ldots , H_k$ are cycle-conformal, we can recognise whether $G$ is cycle-conformal in $\mathcal{O}(m^2)$-time.
\end{lemma}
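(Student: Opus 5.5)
The plan is to prove the two directions separately, using throughout the observation that every perfect matching of $G$ consists only of edges of $\Cov{G}$. Consequently the perfect matchings of $G - V(C)$ for a cycle $C$ are determined by what happens inside the components $H_1,\ldots,H_k$, and each $H_i$ has a perfect matching, since the restriction of any perfect matching of $G$ to $V(H_i)$ is one.

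\emph{Forward direction.} Assume $G$ is cycle-conformal.

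For (i), take an even cycle $C$ in some $H_i$. It is an even cycle of $G$, so $G - V(C)$ has a perfect matching $M$. All edges of $M$ lie in the cover graph. Hence $M$ decomposes into perfect matchings of $H_j$ for $j\neq i$ and of $H_i - V(C)$, so $C$ is conformal in $H_i$.

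For (ii), suppose some $e = uv \in F$ lies on an even cycle $C$ of $G$. Conformality gives a perfect matching $M'$ of $G - V(C)$. Let $N$ be the perfect matching of $C$ containing $e$. Then $M' \cup N$ is a perfect matching of $G$ containing $e$, contradicting $e \notin E(\Cov{G})$.

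\emph{Backward direction.} Assume (i) and (ii), and let $C$ be an even cycle of $G$. By (ii), $C$ uses no edge of $F$. Since $C$ is connected, it lies entirely within one component $H_i$. By (i), $H_i - V(C)$ has a perfect matching. Combining it with perfect matchings of the other $H_j$ gives a perfect matching of $G - V(C)$.

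\emph{Algorithm.} The $\mathcal{O}(mn)$ algorithm of de Carvalho and Cheriyan computes $\Cov{G}$, and hence $F$ and the components $H_i$, in $\mathcal{O}(mn) \subseteq \mathcal{O}(m^2)$ time (assuming $G$ is connected; otherwise work per component). For each $e = uv \in F$ I need to decide whether $e$ lies on an even cycle, i.e.\ whether $G - e$ contains a $u$-$v$-path of odd length.

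\begin{itemize}
\item \emph{$u,v$ in different blocks of $G$.} Then $e$ is a bridge and lies on no cycle at all.
\item \emph{$u,v$ in the same block $B$, $B$ non-bipartite.} Then $e$ lies on an even cycle. This is the standard fact that in a 2-connected non-bipartite graph every edge lies on both an odd and an even cycle: take an odd cycle $D$ and two disjoint paths from $\{u,v\}$ to $D$ (Menger), and close up along whichever of the two arcs of $D$ gives the right parity.
\item \emph{$B$ bipartite.} Then every cycle is even, so $e$ lies on an even cycle if and only if it is not a bridge, i.e.\ if and only if $B \neq K_2$.
\end{itemize}

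Thus a single block decomposition in $\mathcal{O}(m)$ time decides (ii) for all $e \in F$ at once, well within the claimed $\mathcal{O}(m^2)$ bound. Alternatively, for each edge one can run a parity BFS on the bipartite double cover of $G - e$, costing $\mathcal{O}(m)$ per edge.

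The main obstacle, mild as it is, is justifying this even-cycle test cleanly. The structural equivalence itself is routine once one notes that perfect matchings live inside the cover graph.
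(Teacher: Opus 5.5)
\textbf{The equivalence is fine; the recognition algorithm is not.} Your proof of the equivalence is correct and is essentially the paper's argument. The gap is in the $\mathcal{O}(m^2)$ recognition claim, which is part of the statement. Neither of your two tests decides whether $e=uv$ lies on an even cycle, i.e.\ whether $G-e$ has an odd $u$-$v$ \emph{path}.

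\textbf{The block test.} The ``standard fact'' behind it is false. An odd cycle $C_{2k+1}$ is $2$-connected and non-bipartite, yet it has no even cycle at all. Your Menger sketch breaks exactly when every odd cycle of the block passes through $e$. If $e\in E(D)$, the two arcs of $D$ between $u$ and $v$ are $e$ and $D-e$, and closing up just returns $D$.

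This happens for edges of $F$. Take the $4$-cycle $abcd$ with the chord $ac$. Its only perfect matchings are $\{ab,cd\}$ and $\{bc,da\}$, so the cover graph is $C_4$ and $F=\{ac\}$. The chord $ac$ lies only on the triangles $abc$ and $acd$, so by the lemma $G$ is cycle-conformal. But $G$ is a single non-bipartite block, so your test rejects it.

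\textbf{The double-cover test.} BFS in the bipartite double cover of $G-e$ detects odd $u$-$v$ \emph{walks}, not paths. Add to the example a triangle $byz$ and pendant edges $yy'$, $zz'$. The resulting graph is still cycle-conformal: its only even cycle is $abcd$, and $\{yy',zz'\}$ matches the rest. However, $a\,b\,y\,z\,b\,c$ is an odd $a$-$c$ walk in $G-ac$, while every $a$-$c$ path in $G-ac$ has length $2$. (Minor point: your first case is mis-stated, since adjacent $u,v$ always share a block. You mean that the block of $e$ is $K_2$.)

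\textbf{How to repair it.} The paper subdivides $e=uv$ by a vertex $v_e$, deletes $uv_e$, and calls the linear-time even-path algorithm of LaPaugh and Papadimitriou, which works with paths. This costs $\mathcal{O}(m)$ per edge of $F$.

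An elementary alternative also works. An edge $e$ in the block $B$ of $G$ lies on an even cycle if and only if $B\neq K_2$ and either $B$ is bipartite or $B-e$ is non-bipartite.
\begin{enumerate}
\item If $B-e$ contains an odd cycle $D$, your Menger argument applied to $D$ gives $u$-$v$ paths of both parities in $B-e$. The two disjoint $\{u,v\}$-$V(D)$-paths cannot use $e$.
\item If $B$ is non-bipartite but $B-e$ is bipartite, then $u$ and $v$ lie in the same colour class of the connected graph $B-e$. Hence every cycle through $e$, which must lie in $B$, is odd.
\end{enumerate}
Testing bipartiteness of $B-e$ costs $\mathcal{O}(m)$ per edge of $F$, which gives the claimed $\mathcal{O}(m^2)$ bound.
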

\begin{proof}
    We first confirm the forward direction, assuming that $G$ is cycle-conformal.
    Suppose the first condition does not hold and w.l.o.g.\ $H_1$ is not cycle-conformal.
    Then there exists a cycle $C \subseteq H_1$ such that $H_1 - V(C)$ does not have a perfect matching, but $G - V(C)$ does have a perfect matching $M$.
    Note now that by definition of a cover graph all edges covering vertices in $V(H_1)$ must be contained in $H_1$ and thus, we have a contradiction to $H_1 - V(C)$ not having a perfect matching.

    Next, consider the possibility of the existence of an even cycle $C \subseteq G$ such that $E(C) \cap F \neq \emptyset$ and let $M$ be a perfect matching of $G - V(C)$.
    Then $C$ has two perfect matchings $M_1,M_2$, at least one of which, say $M_1$, contains an edge of $F$ and thus $M \cup M_1$ is a perfect matching of $G$, contradicting the definition of $F$.

    Thus, we must now only prove the backwards direction.
    For this purpose let $C \subseteq G$ be an even cycle and note that, according to the second condition, we have $E(C) \cap F = \emptyset$ and thus $C \subseteq H_i$, for some $i \in [k]$.
    Let $M_i$ be the perfect matching of $H_i - V(C)$ that must exist since $H_i$ is cycle-conformal and let $M_j$ be a perfect matching for each $i \in [k] \setminus \{ i \}$.
    Then $\bigcup_{i=1}^k M_i$ is a perfect matching of $G - V(C)$.
    As a consequence, we know that $G$ is cycle-conformal.

    Finally, in regards to checking whether $G$ is cycle-conformal if we already know that $H_1, \ldots , H_k$ are cycle-conformal, we must only confirm algorithmically that the second condition is fulfilled.
    For this purpose, we suggest the following na\"ive method:

    For each edge $e = uv \in F$, subdivide $e$ once with the vertex $v_e$, delete the edge $uv_e$, and check whether there exists an even $u$-$v_e$-path in the resulting graph, which can be done in $\mathcal{O}(m)$-time~\cite{LaPaughP1984Evenpath}.
    This then clearly results in a runtime in $\mathcal{O}(m^2)$.
\end{proof}

\section{Non-planar, Pfaffian braces}\label{sec:braces}
Our first goal is to prove that $C_4$ is the unique Pfaffian, cycle-conformal brace.
We start by giving a simple proof of \zcref{thm:c4uniqueplanar}, since it uses an idea that will play a key role in the proof of the main theorem in this section and illustrates why planar graphs are convenient to work with when trying to determine whether a graph is cycle-conformal.

\begin{lemma}\label{lem:planarbraces}
    There are no planar, cycle-conformal braces with more than four vertices.
\end{lemma}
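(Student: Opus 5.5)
The plan is to exhibit, in any planar brace $G$ with more than four vertices, an explicit even cycle $C$ such that some vertex of $G - V(C)$ has no neighbours outside $V(C)$. Then $G - V(C)$ has an isolated vertex and thus no perfect matching, so $C$ is not conformal.

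First, $G$ is not $C_4$, so by the characterisation of braces stated above it is bipartite and $2$-extendable. By \zcref{thm:extendabilitybasics} it is therefore $3$-connected. Fix any vertex $v \in V(G)$ and a plane drawing of $G$. Since $G$ is $3$-connected, $G - v$ is $2$-connected, and the restricted drawing is a plane drawing of $G - v$. The point where $v$ was drawn, together with the interiors of its incident edges, lies in a single face $f$ of $G - v$. By the result of Whitney recalled in \zcref{sec:prelim}, the boundary of $f$ is a cycle $C$ of $G - v$.

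Every neighbour $u$ of $v$ lies on $C$. Indeed, the edge $vu$ is drawn inside $f$ apart from its endpoint $u$, so $u$ lies on the boundary of $f$. Hence all neighbours of $v$ lie in $V(C)$, while $v \notin V(C)$. In $G - V(C)$ the vertex $v$ is therefore isolated, and $G - V(C)$ has no perfect matching.

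Finally, $C$ is even because $G$ is bipartite. So $C$ is an even, non-conformal cycle, and $G$ is not cycle-conformal, which proves the lemma. Combined with the fact that $C_4$ is trivially cycle-conformal, this also yields \zcref{thm:c4uniqueplanar}.

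The only point needing care is that the face of $G - v$ containing $v$ really is bounded by a cycle. This is exactly where we use that $G - v$ is $2$-connected, which follows from $3$-connectivity via $2$-extendability; otherwise the argument is routine. The underlying idea is to find an even cycle that separates an unbalanced set of vertices (here the single vertex $v$) from the rest of the graph. This idea is what we expect to carry over to the Pfaffian setting through the trisum structure of \zcref{thm:pfaffian}.
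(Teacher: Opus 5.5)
Your proof is correct, and it takes a more direct route than the paper's. The paper picks two vertices $u,v$ of the same colour class and uses Menger's theorem to get three internally disjoint $u$-$v$-paths, whose union is a bisubdivision $H$ of $K_{2,3}$. It then notes that $|V(H)|$ is odd while $|V(G)|$ is even, so one of the three regions cut out by $H$ contains an odd number of vertices, and the even cycle bounding that region separates this odd set from the rest.

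You instead delete a single vertex $v$, use the $2$-connectivity of $G-v$ to get a facial cycle $C$ containing all of $N(v)$, and conclude that $v$ is isolated in $G-V(C)$. This avoids Menger and the parity count, and it gives a stronger witness: an isolated vertex rather than an odd set, for every choice of $v$. In fact your argument shows that no $3$-connected planar bipartite graph is cycle-conformal, using nothing about braces beyond $3$-connectivity.

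The paper's theta-structure formulation is meant as a template for the Pfaffian case. There, the edges $ab,bc$ of the trisum $4$-cycle may be missing from $G$ and have to be replaced by paths through the other summands. Writing the separating cycle as two paths of a bisubdivided $K_{2,3}$ makes that rerouting and the parity bookkeeping explicit. Even so, in that later proof the odd set inside $G_1$ is exactly the single vertex $d$, so your isolate-one-vertex idea is what is really driving that argument too.
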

\begin{proof}
    Let $G$ be a planar, cycle-conformal brace with more than four vertices.
    \zcref{thm:extendabilitybasics} tells us that $G$ is 3-connected.
    Let $u,v \in V(G)$ be two vertices of the same colour class in $G$.
    There exist three $u$-$v$-paths $P_1,P_2,P_3$ in $G$ that are internally disjoint according to \zcref{thm:menger}.
    Hence $H \coloneqq P_1 \cup P_2 \cup P_3$ is a bisubdivision of $K_{2,3}$.
    
    Since $G$ is planar and $H \subseteq G$, the deletion of $H$ from a plane drawing of $G$ divides the plane up into three regions.
    We let $X_1$, $X_2$, and $X_3$ be the three subsets of $V(G) \setminus V(H)$ drawn into these regions.
    Accordingly, we have $\bigcup_{i=1}^3 X_i \cup V(H) = V(G)$.
    As $G$ is a brace, $|V(G)|$ is even.
    Since $H$ contains an odd number of vertices, at least one set among $X_1,X_2,X_3$ must be odd.

    Assume w.l.o.g.\ that $X_1$ is odd.
    According to our definition for $X_1,X_2,X_3$, there exist two distinct $i,j \in [3]$, such that $P_i \cup P_j$ is an even cycle and $P_i \cup P_j$ separates $X_1$ from $X_2 \cup X_3 \cup ( V(H) \setminus V(P_i \cup P_j) )$.
    Since $X_1$ is odd, it cannot be covered by a perfect matching and thus $P_i \cup P_j$ is not a conformal cycle.
\end{proof}

From this lemma it is easy to deduce that the only interesting braces we have to consider are non-planar, Pfaffian braces.
Here our general strategy will be to pluck apart such a brace using \zcref{thm:pfaffian}, find a problematic structure as in the proof of \zcref{lem:planarbraces}, and then show that the trisum operation essentially preserves the properties of the problematic structure.
The following two lemmas due to McCuaig are particularly helpful in this endeavour.

\begin{lemma}[McCuaig~\cite{McCuaig2004Polyas}]\label{lem:facial4cycledeletion}
    If $C$ is a facial cycle of a planar brace $G$, then $G - V(C)$ is matching covered.
\end{lemma}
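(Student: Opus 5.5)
The plan is to work with Hall's theorem in the bipartite graph $G' \coloneqq G - V(C)$ and to turn any Hall obstruction into a contradiction, using either the $2$-extendability of $G$ or its planarity. The case $G = C_4$ is degenerate, so assume $|V(G)| \geq 6$; then $G$ is bipartite and $2$-extendable, and $3$-connected by \zcref{thm:extendabilitybasics}. Since $C$ is an even cycle, it has equally many black and white vertices, so $G'$ is balanced. To show that $G'$ is matching covered it suffices to fix an arbitrary edge $e = xy \in E(G')$, with $x$ black and $y$ white, and show that $G'' \coloneqq G' - x - y$ has a perfect matching. (Existence of a perfect matching of $G'$ then follows from any single edge, and we still need to check $|V(G')| \geq 4$.)

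\textbf{Step 1: the Hall obstruction.} Suppose $G''$ has no perfect matching. Then Hall's theorem gives a nonempty set $S$ of black vertices of $G''$ with $|N_{G''}(S)| \leq |S| - 1$. I will use the standard neighbourhood form of $2$-extendability for balanced bipartite graphs: every nonempty set $S$ of black vertices with $|S|$ at most the size of a colour class minus $2$ satisfies $|N_G(S)| \geq |S| + 2$. Every neighbour of $S$ in $G$ lies in $N_{G''}(S)$, in $\{y\}$, or among the white vertices of $C$. Hence $S$ has at least three neighbours in $W \coloneqq \{y\} \cup (V(C) \cap \text{white})$, and more precisely $|N_G(S) \cap W| \geq |S| + 2 - |N_{G''}(S)| \geq 3$. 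If $S$ is too large for this inequality to apply, I would pass to the complementary white set $T$ of $G''$ not adjacent to $S$, which is again deficient, and run the symmetric argument.

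\textbf{Step 2: planarity.} Put a new vertex $z$ into the face bounded by $C$ and join it to all vertices of $C$; the resulting graph $G^+$ is still planar. Choose $S$ minimal with the Hall deficiency. I expect this minimality, together with $3$-connectivity, to make $G[S \cup N_{G''}(S)]$ connected. That connected piece sees at least three vertices of $W$. At least two of these lie on $C$ and are joined to $z$, and the remaining neighbours are either further vertices of $C$ or the vertex $y$, which is reachable through $x$ and the rest of $G$. From this I would try to route three disjoint paths and obtain a $K_{3,3}$-subdivision in $G^+$: one branch set is the piece $S \cup N_{G''}(S)$, and the other side uses $z$ together with vertices of $C$. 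This would contradict \zcref{thm:kuratowski}. An alternative I would also try is to pick two edges of $C$ adjacent to the attachment points and apply $2$-extendability to them directly. The resulting perfect matching must match $S$ into $N_{G''}(S)$ together with few vertices of $W$, which violates the counting from Step 1.

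\textbf{Main obstacle.} The hard part is Step 2: converting "the piece $S$ attaches to at least three white vertices outside $G''$" into a genuine planarity contradiction. The difficulty is that $y$ is not on the face $C$, and the attachments on $C$ may be interleaved in a way that the naive $K_{3,3}$ does not capture. I expect a case analysis on how $y$ and the attachment vertices on $C$ interleave around the face, using the facial structure given by Whitney's theorem. Finally, I would check separately the small case $|V(G')| \leq 2$, which is possible for small braces such as the cube, so that the matching covered requirement of having at least four vertices does not fail trivially.
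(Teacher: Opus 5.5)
The paper does not prove this lemma; it imports it from McCuaig, so your argument has to stand on its own. At present it does not. Step~2 is the only place where planarity or the facial hypothesis enters, so it carries the entire content of the lemma, and you leave it as a list of hopes. The route you sketch there also cannot work as stated. A single connected deficient piece $S\cup N_{G''}(S)$ adjacent to two white vertices $w_1,w_2$ of $C$ and to $y$ is perfectly compatible with $C$ bounding a face: the whole configuration, including $z$ and its spokes, can be drawn in the plane. So no Kuratowski subgraph can be extracted from this piece together with $z$, and neither the $K_{3,3}$ nor the alternative via two edges of $C$ is actually carried out. Two side remarks. Connectivity of the piece needs neither minimality nor $3$-connectivity, because the deficiency is additive over the components of $G''[S\cup N_{G''}(S)]$. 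The size caveat in Step~1 never occurs, because $S$ misses at least $|V(C)|/2+1\ge 3$ black vertices of $G$.

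The missing idea is to use the other colour class as well. Since $G''$ is balanced, let $T$ be the set of white vertices of $G''$ with no neighbour in $S$. Then $N_{G''}(T)$ lies among the black vertices of $G''$ outside $S$, so $|N_{G''}(T)|\le |T|-1$. A deficient component of $G''[T\cup N_{G''}(T)]$ gives a second connected piece, vertex-disjoint from the first, which by $2$-extendability is adjacent to at least two black vertices of $C$.

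For a facial $4$-cycle $C=abcd$, the two pieces are forced to see $\{a,c\}$ and $\{b,d\}$ respectively. This is the only case the paper ever uses, in \zcref{lem:matchingwithface} and \zcref{lem:disjoint4cyclepaths}. You then get vertex-disjoint $a$--$c$ and $b$--$d$ paths with interiors in $G-V(C)$. Together with $C$ and your vertex $z$ these form a subdivision of $K_5$ (not $K_{3,3}$), contradicting \zcref{thm:kuratowski}.

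For longer facial cycles the two attachment sets need not interleave; on a hexagonal face $c_1\cdots c_6$, attachments $\{c_1,c_3\}$ and $\{c_4,c_6\}$ are compatible with planarity. So the statement as written needs more. The cleanest way to finish within your own framework is to note that you are really proving that $C$ is conformal in $G-x-y$. That graph is a connected, plane, matching covered bipartite graph, by $2$-extendability and $3$-connectivity of $G$, and $C$ still bounds a face of it. The required fact is then the theorem of Zhang and Zhang that every facial cycle of a plane elementary (matching covered) bipartite graph is conformal, which you would have to cite or prove.

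Finally, your worry about the cube is unfounded. Deleting a face of the cube leaves the opposite face, a $C_4$. Since $K_{3,3}$ is the only brace on six vertices, every planar brace other than $C_4$ has at least eight vertices.
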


\begin{lemma}[McCuaig~\cite{McCuaig2004Polyas}]\label{lem:separating4cycle}
    Let $G_1$ and $G_2$ be bipartite such that $G_1 \cap G_2$ is a 4-cycle.
    If $G_1 \cup G_2$ is a brace then $G_1$ and $G_2$ are braces.
\end{lemma}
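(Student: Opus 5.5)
Write $G \coloneqq G_1 \cup G_2$ and $C \coloneqq G_1 \cap G_2$. Let $X \coloneqq V(G_1) \setminus V(C)$ and $Y \coloneqq V(G_2) \setminus V(C)$. The plan is to use the characterisation of braces as $C_4$ or bipartite, 2-extendable graphs, and to transfer perfect matchings of $G$ to $G_1$ by repairing them locally on $C$. By symmetry it suffices to treat $G_1$.

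\textbf{Step 1: reductions and the separation property.} Since $G_1 \cap G_2 = C$, every edge of $G$ lies in $G_1$ or in $G_2$, so there is no edge between $X$ and $Y$. Since $C \subseteq G_1$, $C$ is a cycle in the bipartite graph $G_1$, and a 4-cycle in a bipartite graph has no chords. Hence if $X = \emptyset$, then $G_1 = C = C_4$ is a brace. So assume $X \neq \emptyset$, and likewise $Y \neq \emptyset$ when needed. Then $G$ has more than four vertices, so $G \neq C_4$ and $G$ is bipartite and 2-extendable.

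\textbf{Step 2: balance of $X$ and $Y$.} Let $e, f$ be two opposite (disjoint) edges of $C$. They form a matching of size two, so 2-extendability gives a perfect matching $M_0$ of $G$ containing $e$ and $f$. Since $V(C)$ is already covered by $e$ and $f$, and there are no $X$-$Y$ edges, $M_0$ restricted to $X$ is a perfect matching of $G[X]$. Hence $X$ contains equally many white and black vertices, and the same argument applies to $Y$. In particular $|V(G_1)| = |X| + 4 \geq 6$.

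\textbf{Step 3: 2-extendability of $G_1$.} Let $N$ be a matching of size at most two in $G_1$, and extend it to a perfect matching $M$ of $G$. Every vertex of $X$ is matched by $M$ into $X \cup V(C)$, via edges of $G_1$. The vertices of $Y$ are matched into $Y \cup V(C)$. Since $Y$ is balanced, the set $Z$ of vertices of $C$ matched into $Y$ contains equally many white and black vertices, so $|Z| \in \{0, 2, 4\}$. Put $M_1 \coloneqq M \cap E(G_1)$; it contains $N$ and covers exactly $V(G_1) \setminus Z$. We repair $M_1$ by cases:
\begin{itemize}
\item If $|Z| = 0$, then $M_1$ is already a perfect matching of $G_1$.
\item If $|Z| = 4$, add a perfect matching of $C$ to $M_1$.
\item If $|Z| = 2$, then $Z$ consists of one white and one black vertex of a 4-cycle. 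These are adjacent in $C \subseteq G_1$, so we add that edge.
\end{itemize}
In each case we obtain a perfect matching of $G_1$ containing $N$, since the edges we add avoid the vertices covered by $M_1$ and hence avoid $N$.

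Thus $G_1$ is bipartite and 2-extendable, and therefore a brace; the same holds for $G_2$. The main point needing care is the parity bookkeeping in Steps 2 and 3. In particular, it is the balance of $Y$ that forces $Z$ to be colour-balanced, and this is what makes the local repair on $C$ always possible.
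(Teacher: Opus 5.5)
Your proof is correct. The paper gives no proof of this lemma; it imports the result from McCuaig. So there is no in-paper argument to compare against, and what you give is a self-contained, elementary proof that uses only the characterisation of braces as $C_4$ or bipartite, 2-extendable graphs from the preliminaries.

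The chain of steps holds up:
\begin{itemize}
\item A perfect matching of $G$ through two opposite edges of $C$ shows that $V(G_1)\setminus V(C)$ and $V(G_2)\setminus V(C)$ are each colour-balanced.
\item The balance of $Y$ forces the set $Z$ of vertices of $C$ matched into $Y$ to be colour-balanced.
\item You can then repair on $C$, because in a 4-cycle every white vertex is adjacent to every black one.
\end{itemize}
The side checks are also right: $|V(G_1)|\geq 6$, the degenerate case $X=\emptyset$, and $Y=\emptyset$, where $Z$ is empty.

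One point is used implicitly in Step 3 and is worth stating. The claim that $M_1=M\cap E(G_1)$ covers exactly $V(G_1)\setminus Z$ needs every $M$-edge with both ends in $V(C)$ to lie in $G_1$. This holds because the graph is simple and bipartite, so such an edge cannot be a chord and must be an edge of $C\subseteq G_1$. Your Step 1 remark about chords already covers this, but it is where that remark is actually needed.
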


We will need the following rather specific lemma about perfect matchings containing two edges of a 4-cycle derived from the two results above.

\begin{lemma}\label{lem:matchingwithface}
    Let $G$ be a planar brace containing a 4-cycle $C$, let $e \in E(G)$ be disjoint from $V(C)$, and let $ab, cd \in E(C)$ be disjoint.
    Then there exists a perfect matching $M$ of $G$ with $e, ab, cd \in M$.
\end{lemma}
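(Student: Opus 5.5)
The plan is to argue by induction on $|V(G)|$, splitting according to whether $C$ is a facial cycle or a separating one. Since $e$ is disjoint from $V(C)$, $G$ has more than four vertices. So $G$ is $2$-extendable, and by \zcref{thm:extendabilitybasics} it is $3$-connected. By Whitney we may therefore work with its essentially unique plane drawing.

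\emph{Facial case.} If $C$ is a facial cycle of $G$, then \zcref{lem:facial4cycledeletion} says that $G - V(C)$ is matching covered. The edge $e$ lies in $G - V(C)$, so there is a perfect matching $M'$ of $G - V(C)$ with $e \in M'$. Since $\{ab, cd\}$ is a perfect matching of $C$, the set $M \coloneqq M' \cup \{ab, cd\}$ is the desired perfect matching of $G$.

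\emph{Non-facial case.} Suppose $C$ is not facial. By the Jordan curve theorem, $C$ splits the plane into an interior and an exterior region. As $C$ is not a face boundary, both regions contain vertices of $G$ (a $4$-cycle in a bipartite graph has no chords, so no region can contain only edges). Let $G_1$ be $C$ together with everything drawn in the interior, and $G_2$ be $C$ together with everything drawn in the exterior. Then $G = G_1 \cup G_2$ with $G_1 \cap G_2 = C$, and both graphs are bipartite and planar. \zcref{lem:separating4cycle} tells us that $G_1$ and $G_2$ are braces. The edge $e$ lies in one of them, say $G_1$, and is disjoint from $V(C)$ there. Because $G_2$ contributes vertices outside $C$, we have $|V(G_1)| < |V(G)|$. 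So by induction (or by the facial case directly, if $C$ bounds a face of $G_1$) there is a perfect matching $M_1$ of $G_1$ with $e, ab, cd \in M_1$.

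For $G_2$, note that it has more than four vertices, so it is a brace other than $C_4$ and hence $2$-extendable. The matching $\{ab, cd\}$ therefore extends to a perfect matching $M_2$ of $G_2$. Now $M_1$ and $M_2$ both cover $V(C)$ exactly by $ab$ and $cd$. Their remaining edges cover $V(G_1) \setminus V(C)$ and $V(G_2) \setminus V(C)$ respectively, and these sets are disjoint. Hence $M_1 \cup M_2$ is a perfect matching of $G$ containing $e$, $ab$ and $cd$.

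The main point requiring care is the topological decomposition in the non-facial case. Specifically, we must check that a non-facial $4$-cycle in the $3$-connected plane graph $G$ really yields $G_1, G_2$ with $G_1 \cap G_2 = C$ exactly (no edges crossing between the sides, and no chords of $C$), and each side with a vertex off $C$. This is exactly the hypothesis needed for \zcref{lem:separating4cycle}, and it also guarantees that the induction is well-founded.
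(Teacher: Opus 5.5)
Your proof is correct and is essentially the paper's argument: split $G$ along $C$ into two braces via \zcref{lem:separating4cycle}, apply \zcref{lem:facial4cycledeletion} on the side containing $e$ (where $C$ is facial), and glue this with a perfect matching of the other side that uses $ab$ and $cd$. The differences are minor. The paper handles the facial case uniformly by allowing one side to be $C$ itself, and it applies \zcref{lem:facial4cycledeletion} to both sides, whereas you use $2$-extendability for $G_2$; also, your induction is unnecessary, since $C$ bounds a face of $G_1$ by construction.
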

\begin{proof}
    Independent of whether or not $C$ bounds a face in a plane drawing of $G$, there exist two bipartite graphs $G_1$ and $G_2$ with $G_1 \cap G_2 = C$ such that $G_1 \cup G_2 = G$.
    Since $G$ is a brace, $G_1$ and $G_2$ are therefore also braces according to \zcref{lem:separating4cycle} and furthermore these two graphs have a plane drawing in which $C$ bounds a face.
    We may choose these two graphs such that $e \in E(G_1)$.
    Both $G_1 - V(C)$ and $G_2 - V(C)$ are matching covered according to \zcref{lem:facial4cycledeletion}.
    Thus there exists a perfect matching $M_1$ of $G_1 - V(C)$ with $e \in M_1$ and a perfect matching $M_2$ of $G_2 - V(C)$.
    The perfect matching we seek is therefore $M_1 \cup M_2 \cup \{ ab, cd \}$.
\end{proof}

We follow this up with a result that tells us that the vertices of a $C_4$ in a planar brace are well-connected to each other via paths.

\begin{lemma}\label{lem:disjoint4cyclepaths}
    Let $G$ be a planar brace other than $C_4$ and let $C$ be a 4-cycle in $G$ with $ab, cd \in E(C)$ being disjoint edges.
    Then there exists an $a$-$b$-path $P$ and a $c$-$d$-path $Q$ in $G - E(C)$ such that $P$ and $Q$ are disjoint.
\end{lemma}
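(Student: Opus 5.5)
Let $C = abcda$, so that $ab$ and $cd$ are opposite edges; up to relabelling, $b$ and $c$ are adjacent, as are $a$ and $d$. Since $G$ is a brace other than $C_4$, it has at least six vertices, and by \zcref{thm:extendabilitybasics} it is 3-connected. The plan is to reduce to the case where $C$ bounds a face, and then use the brace property together with planarity to route the two paths on disjoint sides.

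First I split $G$ along $C$, exactly as in the proof of \zcref{lem:matchingwithface}. We write $G = G_1 \cup G_2$ with $G_1 \cap G_2 = C$, where each $G_i$ has a plane drawing in which $C$ bounds a face. If $C$ is itself facial, one $G_i$ equals $C$ and is discarded. By \zcref{lem:separating4cycle} each $G_i$ is a brace. Since $G \neq C_4$, some $G_i$, say $G_1$, has vertices outside $V(C)$, and it suffices to find the two paths inside $G_1 - E(C)$. So I may assume that $C$ bounds a face $f$ of $G$.

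Next I pick an edge $e$ of $G - V(C)$, which exists because $|V(G)| \geq 6$ and $G$ is connected with a perfect matching. By \zcref{lem:matchingwithface}, $G$ has a perfect matching $M$ containing $ab$ and $cd$. Now let $M'$ be a perfect matching of $G - V(C)$ extended by the other pair $bc, da$; such an $M'$ exists by \zcref{lem:facial4cycledeletion}. The symmetric difference $M \triangle M'$ then contains the cycle through $ab$ and through $cd$. That cycle leaves $C$ at $b$ via an $M'$-edge, and it contains paths of $G - E(C)$ pairing up $\{a,b,c,d\}$. This pairing is either $a$–$b$ with $c$–$d$, or $b$–$c$ with $d$–$a$; the pairing $a$–$c$, $b$–$d$ is impossible for parity reasons, since the alternating paths join vertices of opposite colour. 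The resulting paths are vertex-disjoint and avoid $E(C)$.

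If the pairing is $a$–$b$, $c$–$d$, we are done. Otherwise we obtain disjoint paths $P'$ from $b$ to $c$ and $Q'$ from $d$ to $a$ in $G - E(C)$. To repair this I would use planarity: with $C$ bounding the face $f$, the paths $P'$ and $Q'$ together with $C$ split the rest of the disc. I then use 3-connectivity (via \zcref{thm:menger}, applied after deleting two vertices) to find a path from the interior of $P'$ to the interior of $Q'$. Rerouting along such a crossing path should give an $a$-$b$-path and a $c$-$d$-path that are disjoint, since on a face the cyclic order $a,b,c,d$ makes this linkage planar-compatible.

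Alternatively, and more cleanly, I would use 2-extendability directly. Choose the perfect matching to contain $bc$ and one extra edge $e'$ chosen so that the alternating cycle through $ab$ is forced to pair $a$ with $b$. I expect this final step, excluding the "wrong" pairing, to be the main obstacle. My first attempt at it would be the planar face argument: in a plane graph with $C$ facial, disjoint $a$–$b$ and $c$–$d$ linkages outside the face exist exactly when no 2-separation isolates them, and a brace has no 2-separations.
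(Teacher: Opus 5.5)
Your reduction to the case where $C$ bounds a face is fine, but the core of the argument has a genuine gap. The symmetric-difference step does not produce any paths. $M$ contains $ab,cd$ and $M'$ contains $bc,da$, so every vertex of $C$ meets exactly two edges of $M \triangle M'$, and both lie in $E(C)$. Hence $C$ itself is a component of $M \triangle M'$: the $M'$-edge at $b$ is $bc$, and the cycle never leaves $C$. This part is repairable. Take $M'$ to contain $aa'$ and $cc'$ for distinct neighbours $a',c' \notin V(C)$; then $b$ and $d$ are also matched outside $C$. Your parity argument then correctly shows that the pairing is either $a$--$b$, $c$--$d$ or $b$--$c$, $d$--$a$.

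The real problem is the next step. Excluding or repairing the pairing $b$--$c$, $d$--$a$ is the whole content of the lemma, and you leave it open. A single path $R$ from the interior of $P'$ to the interior of $Q'$ only gives one of the two required paths: after rerouting along $R$, the remaining pair has no way to connect. You would need two disjoint crossing paths in the right order, and nothing establishes them. The claim that the linkage exists ``exactly when no 2-separation isolates them'' is asserted without proof and essentially restates the lemma. The alternative with a matching containing $bc$ fails for the same reason as above: the alternating cycle then runs $a,b,c,d$ along $C$ and yields only a $d$--$a$ path.

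The missing idea is to use the face structure of $G - V(C)$ rather than matchings, which is what the paper does. Once $C$ bounds a face, \zcref{lem:facial4cycledeletion} and \zcref{thm:extendabilitybasics} give that $G - V(C)$ is 2-connected. So the face of $G - V(C)$ containing the face bounded by $C$ is bounded by a cycle $C'$. Now pick neighbours $a',b',c',d'$ of $a,b,c,d$ outside $C$. They are distinct: if, say, $a' = c'$, then the theta graph formed by $C$ and the path $a a' c$ makes $a'$ a cut-vertex of $G - V(C)$, separating the outside neighbours of $b$ and $d$. These vertices lie on $C'$. The edges $aa',bb',cc',dd'$ are disjoint curves in the region between $C$ and $C'$, so they appear on $C'$ in the cyclic order $a',b',c',d'$. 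The $a'$--$b'$ and $c'$--$d'$ arcs of $C'$ are therefore disjoint. Extending them by $aa',bb'$ and $cc',dd'$ gives the required paths $P$ and $Q$ in $G - E(C)$.
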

\begin{proof}
    We assume without loss of generality that $a,c$ are white and $b,d$ are black vertices in $G$.
    Let $G'$ be $G$ if $C$ is a facial cycle of $G$.
    Otherwise, there exist two planar, bipartite graphs $G_1$ and $G_2$, both not isomorphic to $C_4$, that each have $C$ as a facial cycle such that $G_1 \cup G_2 = G$ and $G_1 \cap G_2 = C$.
    According to \zcref{lem:separating4cycle}, since $G$ is a brace, both $G_1$ and $G_2$ are braces and we can let $G'$ be $G_1$.
    Thus $C$ is a facial cycle in the planar brace $G'$ which is not isomorphic to $C_4$.

    We claim that there exist four pairwise distinct vertices $a',b',c',d' \in V(G')$ such that $aa',bb',cc',dd' \in E(G')$.
    Since $G'$ is a brace and not isomorphic to $C_4$, it is 3-connected.
    In particular, each vertex in $C$ has a neighbour outside of $C$.
    As candidates for $a',b',c',d'$, we choose one of these neighbours arbitrarily for each vertex in $V(C)$.
    Due to the colouring of $V(C)$, if these four vertices are not be distinct, we have $a' = c'$ or $b' = d'$.
    Suppose that $a' = c'$, then $a'$ is a cut-vertex in $G' - V(C)$ due to the planarity in $G'$.
    This however directly contradicts \zcref{lem:facial4cycledeletion}, as $G' - V(C)$ must be 2-connected according to \zcref{thm:extendabilitybasics} and \zcref{lem:facial4cycledeletion}.
    The same argument applies if $b' = d'$.
    Thus our choice of candidates satisfies our claim.
    
    Using Menger's theorem we ask for two disjoint $\{ a', d' \}$-$\{ b', c' \}$-paths in $G' - V(C)$, which is 2-connected as we just noted.
    Due to $C$ being a facial cycle and $G' - V(C)$ being 2-connected, the vertices $a',b',c',d'$ are found together on a facial cycle of $G' - V(C)$, in the stated order.
    Thus, in this cycle we can find the desired $a'$-$b'$- and $c'$-$d'$-path, which we can extend to an $a$-$b$-path and a $c$-$d$-path via $aa'$, $bb'$, $cc'$, and $dd'$, satisfying the statement of our lemma.
\end{proof}

We now show that we can find a very specific trisum in any non-planar Pfaffian brace.

\begin{lemma}\label{lem:planarsummands}
    Let $G$ be a non-planar, Pfaffian brace other than the Heawood graph.
    Then $G$ is the trisum of three Pfaffian braces $G_1,G_2,G_3$, each distinct from $C_4$, at a 4-cycle $C$ such that both $G_1$ and $G_2$ have a plane drawing in which $C$ is a facial cycle.
\end{lemma}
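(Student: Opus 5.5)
The plan is to use \zcref{thm:pfaffian} and induct on the number of trisum operations. Since $G$ is Pfaffian, not the Heawood graph, and non-planar, \zcref{thm:pfaffian} shows that $G$ arises from planar braces by repeated trisums, and at least one trisum is used. Fix such a construction. Look at the \emph{last} trisum: $G = (G_1 \cup G_2 \cup G_3) - S$ at a 4-cycle $C$. Each $G_i$ is itself built from planar braces by trisums, so each $G_i$ is a Pfaffian brace. (I would confirm this either from the recursive nature of the construction or directly: $G_i$ is bipartite, and adding back the edges of $S$ keeps $G_1 \cup G_2 \cup G_3$ a brace, so \zcref{lem:separating4cycle}, applied to the split $G_i$ versus $\bigcup_{j\neq i} G_j$, shows that each $G_i$ is a brace.) Since $V(G_i)\setminus V(C)\neq\emptyset$, no $G_i$ equals $C_4$.

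Next I need two of the summands to be planar, with $C$ facial in each. Here I would not take an arbitrary decomposition but choose one minimising the number of trisums, or the size of the largest summand. If some summand, say $G_3$, is itself non-planar, it is a trisum at some 4-cycle $C'$ of summands $H_1,H_2,H_3$. The cycle $C$ sits in $G_3$ and so lies "within" at most one $H_j$ side; the key observation is that $C$ meets $V(H_j)\setminus V(C')$ for at most one $j$, because $C$ is a 4-cycle and $C'$ separates the $H_j$'s. I would then regroup the pieces, for example replacing $G_3$ by $H_3$ and absorbing $H_1$ and $H_2$ into the other side. To avoid awkward regrouping, I would instead pick a "leaf" of the decomposition tree. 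Think of the construction as a tree whose nodes are planar braces and whose hyperedges are trisums. Some trisum node has at least two of its three branches consisting of single planar braces; a leaf-most trisum in the tree has this property. I would then re-root the construction so that this trisum is the last one performed. This uses associativity of the gluing: the third summand is the union of everything else, and it is again a Pfaffian brace by \zcref{lem:separating4cycle}. The result is a trisum at $C$ with $G_1$ and $G_2$ planar braces other than $C_4$.

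Finally I need $C$ to be facial in $G_1$ and in $G_2$. If $C$ is not facial in the planar brace $G_1$, then, as in the proofs of \zcref{lem:matchingwithface} and \zcref{lem:disjoint4cyclepaths}, $G_1$ splits as $A\cup B$ with $A\cap B=C$, where $A$ and $B$ are planar braces (by \zcref{lem:separating4cycle}) and $C$ is facial in both. I would move $B$ over to the third summand, replacing $G_3$ by $G_3\cup B$, and repeat this until $C$ is facial in the first summand. I must check that $G_3 \cup B$ is a brace, which again follows from \zcref{lem:separating4cycle} applied to $G = G_1' \cup (G_3\cup B\cup G_2)$ suitably. I must also check that $G_3\cup B$ is Pfaffian, which holds because it is a subgraph obtained by gluing and is itself a trisum/planar composition. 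The same is done for $G_2$.

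I expect the main obstacle to be the bookkeeping in the re-rooting step. The deleted edge sets $S$ from different trisums must be handled, since an edge of $C$ deleted in the final trisum may be present in the summands. I also need to ensure that every regrouped summand is still a brace constructed by trisums of planar braces, so that it is Pfaffian. I would handle the edges of $S$ by working with the edge-maximal version (keeping all edges of $C$) for the brace arguments, and then deleting $S$ only at the end.
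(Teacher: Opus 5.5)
Your overall route is the paper's: extract from \zcref{thm:pfaffian} a trisum at $C$ with two planar summands, then fix faciality by splitting along $C$ and regrouping. The gap is the step you call the ``key observation''. The fact that $C'=abcd$ separates the $H_j$ does not stop a 4-cycle of $G_3$ from meeting two sides. The cycle $x\,a\,y\,c$ with $x\in V(H_1)\setminus V(C')$ and $y\in V(H_2)\setminus V(C')$ passes through the opposite corners $a,c$ of $C'$, and the paper's \zcref{lem:trisum4cycleedges} explicitly allows this configuration. If it occurred, $H_1$ and $H_2$ would be glued to the rest of $G$ outside $C'$. Then neither your regrouping nor your re-rooting ``by associativity'' would be valid.

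What actually excludes it is that $H_1$ would contain a $K_{2,3}$ on $\{a,c\}\times\{b,d,x\}$, and no brace built from planar braces by trisums contains $K_{2,3}$. For planar braces, use the parity argument of \zcref{lem:planarbraces}: one of the three 4-cycles of the $K_{2,3}$ encloses an odd vertex set. But every 4-cycle of a brace is conformal by 2-extendability, a contradiction. The inductive step goes as follows. A $K_{2,3}$ in $G_1\cup G_2\cup G_3$ that is not inside a single $G_i$ must have its two centres at opposite corners of $C$. Then some $G_j$ contains $C$ together with a third common neighbour of these corners. With this nesting in place, your hypertree picture is faithful, and a longest path in the incidence tree ends at a trisum with two leaf braces. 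The deepest trisum of a fixed construction order would not suffice, since later trisums may use 4-cycles lying inside its planar summands. The paper itself only asserts this step as an easy consequence of the tree-like structure.

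Your faciality repair differs from the paper's. The paper splits the non-facial summand $G_1$ into two planar braces $G_1',G_2'$, both with $C$ facial, and merges $G_2\cup G_3$. It then checks 2-extendability of $G_2\cup G_3$ by a case analysis built on \zcref{lem:matchingwithface}. You keep one half $A$ of $G_1$, push the other half $B$ into $G_3$, and treat $G_2$ the same way. One split per summand suffices, since $C$ has only two sides.

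Your brace argument is the shorter one. $G\cup C$ is a brace, because adding edges between the colour classes preserves 2-extendability. Then \zcref{lem:separating4cycle} applied to $(A\cup G_2)\cup(G_3\cup B)$ shows that $G_3\cup B$ is a brace, and this one line would also replace the paper's case analysis. Note that the split you actually wrote, $G_1'\cup(G_3\cup B\cup G_2)$, only yields that $G_3\cup B\cup G_2$ is a brace.

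For Pfaffian-ness, your stated reasons do not work. A union of two braces at one 4-cycle is not a trisum, and subgraphs of Pfaffian graphs need not be Pfaffian. The correct reason is that $G_3\cup B$ is a \emph{conformal} subgraph of the Pfaffian brace $G\cup C$, since $(A-V(C))\cup(G_2-V(C))$ has a perfect matching.
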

\begin{proof}
    We start by observing that the weaker claim that $G$ is the trisum of three Pfaffian braces $G_1$, $G_2$, and $G_3$, each distinct from $C_4$, at a 4-cycle $C$ such that both $G_1$ and $G_2$ are planar is an easy consequence of the tree-like structure that non-planar, Pfaffian braces exhibit according to definition of the trisum operation and \zcref{thm:pfaffian}.
    If both $G_1$ and $G_2$ already have such a planar drawing, then we are done.
    Thus we may suppose w.l.o.g.\ that $G_1$ cannot be drawn in the plane such that $C$ bounds a face of $G_1$.
    According to \zcref{lem:separating4cycle}, we may split $G_1$ along $C$ into two planar braces $G_1'$ and $G_2'$, each permitting a plane drawing in which $C$ is a facial cycle.

    \begin{claim}\label{claim:unionbrace}
        The graph $G_3' \coloneqq G_2 \cup G_3$ is a Pfaffian brace.
    \end{claim}
    \emph{Proof of \zcref{claim:unionbrace}:}
    First, we note that due to the definition of being Pfaffian relying on the existence of a certain type of orientation, this property is in particular closed under taking subgraphs.
    Thus, since $G_3' \subseteq G \cup C$ and $G \cup C$ is Pfaffian according to \zcref{thm:pfaffian}, we know that $G_3'$ is Pfaffian.
    Hence all that remains to be shown is that $G_3'$ is 2-extendable.
    
    Let $e,f \subseteq G_3'$ be two disjoint edges, let $V(C) = \{ a,b,c,d \}$, with $ab, bc, cd, da \in E(C)$.
    If further $e,f \subseteq E(G_2)$, we let $M_2$ be a perfect matching of $G_2$ with $e,f \in M_2$, which must exist since $G_2$ is 2-extendable.
    In turn, we let $M_3$ be a perfect matching of $G_3$ with $ab, cd \in M_3$.
    Thus $M_2 \cup (M_3 \setminus \{ ab, cd \} )$ is a perfect matching of $G_3'$ containing $e,f$.
    An analogous argument takes care of the case in which $e,f \in E(G_3)$.

    We may therefore assume w.l.o.g.\ that $e \in E(G_2)$, $f \in E(G_3)$, and $\{ e,f \} \cap E(C) = \emptyset$.
    Accordingly, both $e$ and $f$ have at most one endpoint in $V(C)$.

    Suppose that $e$ is disjoint from $V(C)$.
    Since $G_2$ is planar, there exists a perfect matching $M_2'$ of $G_2$ with $e, ab, cd \in M_2'$, according to \zcref{lem:matchingwithface}.
    As $G_3$ is matching covered, there also exists a perfect matching $M_3'$ of $G_3$ containing $f$.
    Thus, $(M_2' \setminus \{ ab, cd \}) \cup M_3'$ is the perfect matching of $G_3'$ we want.
    
    Otherwise, if $e$ does have an endpoint in $V(C)$, we suppose w.l.o.g.\ that $e \cap V(C) = \{ a \}$.
    If $f$ is disjoint from $V(C)$, we let $M_2''$ be a perfect matching of $G_2$ containing $e, cd$ and let $M_3''$ be a perfect matching of $G_3$ containing $f, ab$.
    The desired perfect matching of $G_3'$ is therefore $(M_2'' \setminus \{ cd \}) \cup (M_3'' \setminus \{ ab \})$.

    Finally, should $f \cap V(C) \subseteq \{ b,d \}$, we assume w.l.o.g.\ that $f \cap V(C) = \{ d \}$.
    
    We then let $N_2$ be a perfect matching of $G_2$ containing $e, cd$ and $N_3$ be a perfect matching of $G_3$ containing $f, ab$.
    This yields $(N_2 \setminus \{ cd \}) \cup (N_3 \setminus \{ ab \})$ as the matching we seek.
    Thus $G_3'$ is 2-extendable.
    \hfill$\blacksquare$
    
    We conclude that $G_1'$, $G_2'$, and $G_2 \cup G_3$ have the desired properties.
\end{proof}

Furthermore, we will want to know that any edge contained in a 4-cycle is contained in all three summands of a trisum $G$ if no edge of the 4-cycle used to perform this trisum is missing from $G$.

\begin{lemma}\label{lem:trisum4cycleedges}
    Let $G$ be a bipartite graph that is the trisum of $G_1,G_2,G_3$ at a 4-cycle $C$ with $C \subseteq G$ and let $e \in E(G)$ be an edge contained in a 4-cycle $C_e$ of $G$.
    Then there exists an $i \in [3]$ such that $e$ is contained in a 4-cycle $C_e'$ in $G_i$ and we have $V(C_e) \cap V(C_e') = V(C_e) \cap V(G_i)$.
\end{lemma}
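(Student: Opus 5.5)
We take $C_e$ to be a 4-cycle containing $e$ and split into cases according to how $C_e$ meets the three sides of the trisum. Since $C \subseteq G$, no edge of $C$ was deleted, so $G = G_1 \cup G_2 \cup G_3$ with $G_i \cap G_j = C$ for $i \neq j$. Every edge of $G$ therefore lies in some $G_i$. Every vertex of $G$ either lies in $V(C)$ or lies in exactly one set $V(G_i) \setminus V(C)$.

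\textbf{Case 1: $C_e$ has a vertex outside $V(C)$.} Say $x \in V(C_e) \setminus V(C)$ with $x \in V(G_i)$. We walk along $C_e$ starting from $x$. A vertex of $G_i - V(C)$ is adjacent only to vertices of $G_i$, so the walk stays inside $V(G_i)$ until it enters $V(C)$. The danger is that the cycle leaves $V(C)$ again into some $G_j$ with $j \neq i$.

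\emph{Subcase 1a: $C_e$ lies entirely in $G_i$.} Then $C_e' \coloneqq C_e$ works, since $V(C_e) \cap V(G_i) = V(C_e)$.

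\emph{Subcase 1b: $C_e$ visits $G_i - V(C)$ and $G_j - V(C)$ for some $j \neq i$.} Its vertices outside $C$ must be separated along the cycle by vertices of $C$. In a 4-cycle this forces the form $x\,u\,y\,w$ with $u,w \in V(C)$, $x \in V(G_i) \setminus V(C)$, and $y \in V(G_j) \setminus V(C)$. Then $u$ and $w$ have the same colour, so they are antipodal on $C$. Let $v$ be either of the other two vertices of $C$; it is adjacent to both $u$ and $w$. The edge $e$ lies in $G_i$ or in $G_j$; say $G_i$, by symmetry. We set $C_e' \coloneqq x\,u\,v\,w$, which lies in $G_i$ and contains $e$, since $e \in \{xu, xw\}$. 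Finally, $V(C_e) \cap V(G_i) = \{x,u,w\} = V(C_e) \cap V(C_e')$, because $v \notin V(C_e)$.

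\textbf{Case 2: $V(C_e) \subseteq V(C)$.} Since $G$ is bipartite and $C$ has four vertices, $C_e$ and $C$ span the same vertex set $K_{2,2}$. Hence $C_e = C$, which lies in every $G_i$, and we take $C_e' = C$ with any $i$.

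The main obstacle is the bookkeeping in Subcase 1b. There one must check that the new cycle still contains $e$ and satisfies the exact vertex-intersection condition. It does, because the added vertex $v$ lies outside $C_e$ and the two vertices of $C_e$ in $G_j - V(C)$ are discarded. Everything else follows directly from the structure $G_i \cap G_j = C$.
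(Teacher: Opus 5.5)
Your proof is correct and follows the paper's argument. The paper also first disposes of the case $C_e \subseteq G_i$. Otherwise $C_e$ meets $C$ in an antipodal pair and has exactly one vertex in each of two distinct sides $G_i - V(C)$ and $G_j - V(C)$, and one replaces the vertex on the side not containing $e$ by one of the other two vertices of $C$. You are slightly more explicit than the paper: you treat $V(C_e) \subseteq V(C)$ separately via bipartiteness, and you verify the condition $V(C_e) \cap V(C_e') = V(C_e) \cap V(G_i)$ directly.
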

\begin{proof}
    Let $C_e \subseteq G$ be a 4-cycle with $e \in E(C_e)$, then if $C_e \subseteq G_i$ for some $i \in [3]$, we are clearly done.
    Thus, we must instead have $V(C_e) \cap V(C) \neq \emptyset$.
    In particular, letting $V(C) = \{ a,b,c,d \}$ with $a,c$ occupying the same colour class in $G$, the set $V(C_e) \cap V(C)$ is either $\{ a,c \}$ or $\{ b,d \}$, as any other option immediately implies that there exists an $i \in [3]$ with $C_e \subseteq G_i$.
    Furthermore, there must exist two distinct $i \in [3]$ such that $|(V(G_i) \cap V(C_e)) \setminus V(C)| = 1$ and $|(E(G_i) \cap E(C_e)) \setminus E(C)| = 2$.
    However, in this case, let $i \in [3]$ be the one of these two options such that $e \in E(G_i)$.
    Let $u \in V(G_i) \setminus V(C)$ be the vertex in $V(C_e)$ that has two neighbours in $C$.
    We assume w.l.o.g.\ that these neighbours are $\{ a,c \}$.
    Then clearly $u,a,b,c$ induce a 4-cycle in $G_i$ that contains $e$ and we are done.
\end{proof}

With all of our tools gathered, we can now prove \zcref{thm:c4uniquepfaffian}, which extends \zcref{lem:planarbraces}.

\begin{theorem}\label{thm:main1}
    No Pfaffian brace with more than four vertices is cycle-conformal.
\end{theorem}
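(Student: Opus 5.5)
We argue by contradiction. Suppose $G$ is a Pfaffian, cycle-conformal brace with more than four vertices. If $G$ is planar, \zcref{lem:planarbraces} gives a contradiction. If $G$ is the Heawood graph, the red cycle in \zcref{fig:heawood} is an explicit non-conformal even cycle: deleting its eight vertices leaves six vertices, and one checks directly that they span no perfect matching. So we may assume $G$ is non-planar, Pfaffian, and not the Heawood graph. By \zcref{lem:planarsummands}, $G$ is the trisum of Pfaffian braces $G_1,G_2,G_3$, each distinct from $C_4$, at a 4-cycle $C=abcd$, where $G_1$ and $G_2$ have plane drawings in which $C$ is facial. The plan is to build a non-conformal even cycle of $G$ inside $G_1$ (with help from $G_2$ if needed), imitating the proof of \zcref{lem:planarbraces}.

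First I would apply the $K_{2,3}$-bisubdivision argument of \zcref{lem:planarbraces} inside $G_1$. The aim is an even cycle $D\subseteq G_1$ that separates, in the plane drawing of $G_1$, an odd set $X_1$ of vertices from the rest, with $C$ (a face) lying on the side away from $X_1$ or on $D$. To arrange this, I would choose the two branch vertices $u,v$ of equal colour in $G_1-V(C)$ if possible, and use planarity of $G_1$ with $C$ facial to place the face $C$ inside one of the three regions determined by $H=P_1\cup P_2\cup P_3$. A parity count then yields a region that contains no vertex of $C$ and has an odd number of interior vertices. The bounding cycle $D$ of that region has the property that $X_1$ is separated in $G_1$ from $V(C)\setminus V(D)$.

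Next I would lift this to $G$. All of $G_2$ and $G_3$ attaches to $G_1$ only through $V(C)$, and $V(C)$ lies outside the region containing $X_1$. Hence in $G$ the set $X_1$ is still separated by $V(D)$ from everything else. The subtlety is that $D$ may use edges of $C$ that are deleted in the trisum (those in $S$). Wherever $D$ uses an edge $ab\in E(C)\cap S$, I would replace it by an $a$-$b$-path through $G_2-V(C)$, and similarly for a second edge $cd$. \zcref{lem:disjoint4cyclepaths} applied to $G_2$ supplies disjoint such paths, and both have odd length since they join opposite colours, so parity is preserved. These paths lie outside $G_1$, so $X_1$ remains a component side of $G-V(D')$ for the modified cycle $D'$. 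Since $|X_1|$ is odd, $G-V(D')$ has no perfect matching, which contradicts cycle-conformality.

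The main obstacle is the first step: guaranteeing an odd region disjoint from $C$. If the only odd region contains vertices of $C$, it must also contain the whole face $C$ (it lies in a single region or on $H$), and then I would instead count that region's vertices in $G$. Its odd part is augmented by $V(G_2)\cup V(G_3)\setminus V(C)$, which is even, because each $G_i$ has an even number of vertices and $|V(C)|=4$. So the parity is unchanged, and the region still witnesses that its boundary even cycle is non-conformal in $G$. When the boundary uses deleted edges of $C$, I would handle it as in the previous paragraph, using \zcref{lem:trisum4cycleedges} and \zcref{lem:disjoint4cyclepaths} to reroute through $G_2$ while keeping the separation. Getting this case analysis right is where I expect the real work.
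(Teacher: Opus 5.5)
Your skeleton is the paper's: the $K_{2,3}$-parity argument of \zcref{lem:planarbraces} inside $G_1$, the observation that $|V(G)\setminus V(G_1)|=(|V(G_2)|-4)+(|V(G_3)|-4)$ is even, and rerouting of deleted edges of $C$ through the other summands. Your second paragraph claims that parity alone gives an odd region avoiding $V(C)$. That is false as stated, but your fallback repairs it uniformly. Let $R_k$ be the region containing the face $C$. Then the sets $X_j$ for $j\neq k$, together with $X_k\cup(V(G)\setminus V(G_1))$, are each separated in $G$ by the boundary cycle of their region. Their sizes sum to $|V(G)|-|V(H)|$, which is odd, so one of them is odd and your two cases cover everything. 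The paper avoids this split by choosing $H$ concretely: $P_1=a'abcc'$, and $P_2,P_3$ are the two $a'$-$c'$-paths on the facial cycle of $G_1-V(C)$ surrounding $C$. With this choice the odd set is $\{d\}\cup(V(G)\setminus V(G_1))$.

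The genuine gap is the rerouting. Since your $H$ is arbitrary, the boundary cycle $D$ may contain two \emph{adjacent} edges of $C$ that lie in $S$, say $ab,bc$; this is exactly what happens in the paper's construction. It may even contain three edges $ab,bc,cd$. \zcref{lem:disjoint4cyclepaths} only supplies disjoint $a$-$b$- and $c$-$d$-paths, so it handles opposite edges only. Two adjacent edges cannot in general be rerouted inside $G_2$ alone. If $b$ has degree $3$ in $G_2$ (e.g.\ $G_2$ is the cube), then $b$ has just one neighbour outside $C$, so no $a$-$c$-path through $b$ in $G_2-d$ avoids $E(C)$.

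The missing idea is to reroute the two edges at $b$ through \emph{different} summands: $ab$ through $G_2-\{c,d\}$ and $bc$ through $G_3-\{a,d\}$. Since $G_2-V(C)$ and $G_3-V(C)$ are disjoint, the two new odd paths meet only in $b$. Because $G_3$ need not be planar, you also need a source for the $b$-$c$-path there. The paper uses \zcref{lem:trisum4cycleedges} to pass to a planar piece and then applies \zcref{lem:disjoint4cyclepaths}. Alternatively, $G_3-\{a,d\}$ is connected, and by 2-extendability with the edge $ad$ every edge of it lies in a perfect matching. Hence it is 2-connected and $bc$ lies on a cycle in it. In the three-edge case, route $ab$ and $cd$ through $G_2$ via \zcref{lem:disjoint4cyclepaths} and $bc$ through $G_3$. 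Finally, when the odd set is the one containing $V(G)\setminus V(G_1)$, the rerouted internal vertices are removed from it. There are evenly many of them, so its parity survives.
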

\begin{proof}
    Suppose there exists a Pfaffian brace $G$ that has more than four vertices and is cycle-conformal.
    According to \zcref{lem:planarbraces}, the graph $G$ cannot be planar.
    The fact that the Heawood graph is not cycle-conformal is demonstrated in \zcref{fig:heawood}.

    Thus $G$ is a non-planar Pfaffian brace that is distinct from the Heawood graph.
    According to \zcref{lem:planarsummands}, $G$ is therefore the trisum of three Pfaffian braces $G_1,G_2,G_3$, each distinct from $C_4$, at a 4-cycle $C$ such that both $G_1$ and $G_2$ have a plane drawing in which $C$ is a facial cycle.
    
    Let $V(C) = \{ a,b,c,d \}$ such that $a,c$ occupy the same colour class of $G_1$, let $a'$ be a neighbour of $a$, and let $c'$ be a neighbour of $c$ in $G_1$.
    As argued earlier, due to the 3-connectivity and planarity of $G_1$, we know that $a' \neq c'$.
    Consider the facial cycle $C'$ in $G_1 - V(C)$ whose corresponding face contains the face bounded by $C$ in $G_1$.
    We must have $a',c' \in V(C')$ and we let the two distinct $a'$-$c'$-paths in $C'$ be $P_2$ and $P_3$ respectively.
    Note that $a',c'$ occupy the same colour class in $G_1$ and thus $P_2$ and $P_3$ both have even length.
    We let $P_1$ be the path in $G_1$ containing the vertices $a',a,b,c,c'$, which has even length.
    Thus $H \coloneq P_1 \cup P_2 \cup P_3$ is a bisubdivision of $K_{2,3}$.

    Since $G_1$ is planar and $H \subseteq G_1$, the deletion of $H$ from a planar drawing of $G_1$ divides the plane up into three regions.
    We let $X_1$, $X_2$, and $X_3$ be the three subsets of $V(G_1) \setminus V(H)$ that are drawn into these regions.
    By construction of $H$, we may name these such that we have $X_1 = \{ d \}$, $X_2 = \emptyset$, and $X_3 = V(G_1) \setminus (V(H) \cup \{ d \})$.
    We note that therefore $X_1$ is the unique set of odd size, as $|V(H)|$ is odd and we have $\bigcup_{i=1}^3 X_i \cup V(H) = V(G_1)$.

    A core property for the proof of \zcref{lem:planarbraces} is that the three sets $X_1, X_2, X_3$ are separated from each other in $G_1 - V(H)$.
    Note that if we let $X_1^\star = X_1 \cup (G \setminus V(G_1))$, this property also holds for $X_1^\star,X_2,X_3$.
    Thus, if we have $ab,bc \in E(G)$, we are now done via the same argument as in the proof of \zcref{lem:planarbraces}, since in this case we have $H \subseteq G$.

    We solve the more general case by finding paths in $G_2$ and $G_3$ that stand in for these edges.
    Using \zcref{lem:disjoint4cyclepaths}, we let $P_1^2$ be an $a$-$b$-path in $G_2$ that avoids $c,d$ as internal vertices and does not use any edge in $E(C)$.
    
    Next, we repeatedly apply \zcref{lem:trisum4cycleedges} to $G_3$ to find a collection of planar graphs $G_1', \ldots , G_\ell'$ which allow us to construct $G_3$ via repeated trisums and such that there exists an $i \in [\ell]$ and a 4-cycle $C' \subseteq G_i'$ with $bc \in E(C')$ such that $V(C) \cap V(C') = V(C) \cap V(G_i')$.
    We can then apply \zcref{lem:disjoint4cyclepaths} to find a $b$-$c$-path $P_1^3$ in $G_i'$ that avoids both $V(C') \setminus \{ b,c \}$ and the edge $bc$.
    Both $P_1^2$ and $P_1^3$ have odd length and intersect only at $b$.
    Therefore, $P_1^2 \cup P_1^3$ is an $a$-$c$-path of even length.

    This allows us to define $H' \coloneqq (H - b) \cup P_1^2 \cup P_1^3$, where we note that $|V(H')|$ and $|V(H)|$ have the same parity, we have $b \in V(H')$, and the unique $a'$-$c'$-path $P_1'$ in $H'$ using $b$ has even length, as did $P_1$ in $H$.
    We may thus let $X_1' = X_1^\star \setminus V(P_1^2 \cup P_1^3)$.
    Since $a,b,c \in V(H')$, the sets $X_1',X_2, X_3$ are pairwise separated in $G - V(H')$.
    Via analogous arguments to those presented in the proof of \zcref{lem:planarbraces}, we conclude that $G$ is not cycle-conformal, as $|X_1'|$ is odd.
\end{proof}

\section{Cycle-conformality and tight cuts}\label{sec:tightcuts}
Now that we have established that there is exactly one cycle-conformal, Pfaffian brace, we consider the possibility of extending results about cycle-conformality over tight cuts.
Our two goals in this section are to prove that one can indeed propagate this property through tight cuts in cubic, bipartite graphs and secondly, we want to extend our characterisation of cycle-conformal, Pfaffian braces to cycle-conformal, Pfaffian, matching covered graphs.

We start by considering two more helpful results due to McCuaig.
A matching $F$ in a graph $G$ is called \emph{induced} if no two endpoints of distinct edges of $F$ are adjacent in $G$.

\begin{lemma}[McCuaig~\cite{McCuaig2000Even}]\label{lem:tightcutshape}
    Let $G$ be a 3-connected, cubic, bipartite graph.
    A non-trivial cut in $G$ is tight if and only if it is an induced matching consisting of 3 edges.
\end{lemma}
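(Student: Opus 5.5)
We prove the two directions separately. Throughout, $G$ is 3-connected, cubic and bipartite with colour classes $A$ and $B$ (so $|A|=|B|$), and $\partial(X)$ is a non-trivial cut.

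\emph{Tight implies induced $3$-matching.} Let $\partial(X)$ be tight.

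First we show $|X \cap A|$ and $|X \cap B|$ differ by exactly one. Take any perfect matching $M$. Since $|M \cap \partial(X)| = 1$, the set $X$ is covered by edges inside $X$ together with exactly one cut edge. Hence $|X|$ is odd and the two colour counts differ by one. Assume w.l.o.g.\ $|X\cap A| = |X\cap B|+1$.

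Next we count the cut edges. Every edge inside $X$ has exactly one end in $X\cap A$ and one in $X\cap B$. Summing degrees gives
\[
3|X\cap A| - e_{\mathrm{in}} = |\partial(X)\cap \partial(X\cap A)|, \qquad 3|X\cap B| - e_{\mathrm{in}} = |\partial(X)\cap\partial(X\cap B)|,
\]
where $e_{\mathrm{in}}$ is the number of edges inside $X$. The difference of these two quantities is $3$. So at least three cut edges leave $X\cap A$.

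We then want to show that no cut edge leaves $X \cap B$. I would use the standard fact that in a bipartite matching covered graph, a tight cut with $|X\cap A|=|X\cap B|+1$ has all its edges incident with $X\cap A$ (the "majority" side). The argument: suppose an edge $e$ joins $X\cap B$ to $\overline{X}$, and take a perfect matching $M \ni e$ (using matching coveredness from 3-connectivity plus cubic bipartite regularity via Hall). Then $X\cap A$ has more vertices than $X\cap B\setminus V(e)$ can absorb. So at least two $M$-edges leave $X\cap A$, and together with $e$ this gives $|M\cap\partial(X)|\ge 3$, contradicting tightness.

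Hence every cut edge leaves $X\cap A$, and $|\partial(X)| = 3$ exactly. By 3-connectivity the three cut edges form a matching: two of them sharing an end $v$ would make $v$ together with the end of the third edge a 2-separator, using non-triviality to guarantee both sides are non-empty. The same reasoning applies on the $\overline{X}$ side, where the ends of the cut edges are the majority colour $B$ of $\overline{X}$.

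For inducedness: the $X$-ends of the cut edges all lie in $A$, so they are pairwise non-adjacent, and likewise the $\overline{X}$-ends all lie in $B$. A cut edge from $A$ goes to $B$, and the only other adjacency to check is between an $X$-end and an $\overline{X}$-end of different cut edges. Such an edge would itself be a fourth cut edge, which is impossible. So the cut is an induced matching of size 3.

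\emph{Induced $3$-matching implies tight.} This is the direction I expect to be the main obstacle.

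Let $\partial(X)=\{e_1,e_2,e_3\}$ be an induced matching. Counting degrees as above gives
\[
3\bigl(|X\cap A| - |X\cap B|\bigr) = \pm 3,\ \pm 1,
\]
depending on how the colours of the $X$-ends of the cut edges are distributed. Since the left side is divisible by $3$, the only possibility is $\pm 3$. So all three $X$-ends have the same colour, say $A$, and $|X\cap A| = |X\cap B|+1$.

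Now let $M$ be any perfect matching. Within $X$, the vertices of $X\cap B$ are all matched inside $X$, which leaves exactly one vertex of $X\cap A$ to be matched across the cut. Therefore $|M\cap\partial(X)|=1$ and the cut is tight.

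The parity count does the work here, and inducedness is only used to exclude configurations with mixed colours that would otherwise survive. If the $3\mid$ argument turns out to need more care, I would redo the count explicitly, tracking for each colour class how many cut edges leave it.
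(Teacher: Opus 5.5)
The paper does not prove this lemma; it cites McCuaig. So there is no in-paper argument to compare against, and your proof is correct and self-contained. The steps of the forward direction all go through:
\begin{itemize}
\item tightness forces $\bigl||X\cap A|-|X\cap B|\bigr|=1$;
\item the colour-class degree count gives exactly three more cut edges at the majority side than at the minority side;
\item a perfect matching through a cut edge at the minority side would meet $\partial(X)$ at least three times;
\item 3-connectivity and bipartiteness then give an induced matching.
\end{itemize}
In the 2-separator step, be explicit about which end you delete. Take the shared end $v$ together with the end of the third edge on the side \emph{not} containing $v$. Then non-triviality alone keeps both sides non-empty, and this also covers the case where all three edges meet at $v$. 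Your degree count is the same device the paper uses inside \zcref{claim:nobridging}.

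One correction to your closing remark: the converse uses neither inducedness nor the fact that the three edges form a matching. From $a-b=3(|X\cap A|-|X\cap B|)$ and $a+b=3$, divisibility alone gives $a-b=\pm 3$. So mixed colourings never ``survive'' to be excluded by inducedness.

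What you have actually shown is stronger: in a cubic bipartite graph, a cut is tight if and only if it has exactly three edges. 3-connectivity and non-triviality are needed only to upgrade such a cut to an induced matching. This form is worth recording. In the proof of \zcref{lem:k33subdivision}, once $F=\{e_u,e_w,e\}$ is known to be a non-trivial three-edge cut, it is already tight. The paper's subsequent verification that $F$ is induced (\zcref{claim:nobridging} and the exclusion of $v'u^\star$, $v'w^\star$) therefore becomes unnecessary.

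Finally, $G$ is matching covered by regularity alone, via K\"onig's edge-colouring theorem, so 3-connectivity is not needed for that step.
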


\begin{lemma}[McCuaig~\cite{McCuaig2000Even}]\label{lem:tightcutpreserve}
    Let $G$ be a 3-connected, cubic, bipartite graph with a non-trivial tight cut $\partial(X)$.
    Then both tight cut contractions associated with $\partial(X)$ are cubic, 3-connected, and bipartite.
\end{lemma}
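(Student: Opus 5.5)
The plan is to reduce everything to \zcref{lem:tightcutshape}. Since $\partial(X)$ is a non-trivial tight cut in the 3-connected, cubic, bipartite graph $G$, that lemma tells us that $\partial(X)$ is an induced matching of exactly three edges, say $x_1y_1, x_2y_2, x_3y_3$ with $x_i \in X$ and $y_i \in \overline{X}$. By symmetry it suffices to treat $G' \coloneqq G[X \mapsto c]$; the argument for $G[\overline{X} \mapsto c]$ is the same with the roles of $X$ and $\overline{X}$ exchanged.

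\textbf{Cubic.} Because $\partial(X)$ is a matching, the vertices $y_1,y_2,y_3$ are pairwise distinct. Hence contracting $X$ creates no parallel edges, and $c$ has exactly the three neighbours $y_1,y_2,y_3$. Every vertex of $\overline{X}$ is incident with at most one edge of $\partial(X)$, so its degree does not change. Thus $G'$ is a simple cubic graph.

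\textbf{Bipartite.} Bipartiteness already follows from \zcref{lem:tightcutpreservematcov}. It can also be seen directly by a counting argument. Let $A,B$ be the colour classes and let $k_A, k_B$ count the cut edges whose endpoint in $X$ is white, respectively black. Counting the edges of $G[X]$ from both sides gives
\[
3|X\cap A| - k_A = 3|X \cap B| - k_B .
\]
So $k_A \equiv k_B \pmod 3$, and together with $k_A + k_B = 3$ this forces $\{k_A,k_B\} = \{0,3\}$. Therefore $y_1,y_2,y_3$ all lie in the same colour class, and giving $c$ the other colour yields a proper 2-colouring of $G'$.

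\textbf{3-connected.} This is the only step needing care, and I would avoid analysing vertex separators containing $c$ directly.
\begin{enumerate}
    \item A 3-connected graph is 3-edge-connected, so $G$ is 3-edge-connected.
    \item Take any edge cut $\partial_{G'}(Y)$ with $\emptyset \neq Y \subsetneq V(G')$. Replacing $c$ by $X$ (if $c \in Y$) yields a proper nonempty vertex set $Y^\ast$ of $G$. The cut $\partial_G(Y^\ast)$ consists of exactly the same edges, using that no two cut edges were merged. Hence $|\partial_{G'}(Y)| \geq 3$, so $G'$ is 3-edge-connected.
    \item For cubic (indeed subcubic) graphs, vertex- and edge-connectivity coincide. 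The standard argument is that a minimum vertex separator can be converted into an edge cut of no larger size, because each separator vertex sends at most one edge to some side. Together with $|V(G')| \geq 4$, which holds since $G'$ is simple and cubic, this gives that $G'$ is 3-connected.
\end{enumerate}
I expect only this last standard fact to need a line of justification. If preferred, I would instead argue directly that $G[X]$ is connected. Indeed, a component $K$ of $G[X]$ satisfies $\partial(K) \subseteq \partial(X)$, so if $G[X]$ had at least two components, some component would be incident with at most one edge of $\partial(X)$, giving a bridge in $G$. With $G[X]$ connected, vertex separators of $G'$ avoiding $c$ lift to $G$.
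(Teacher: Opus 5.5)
Your proof is correct, and it takes a genuinely different route from the paper, which gives no proof of this lemma at all: it imports the statement from McCuaig, together with \zcref{lem:tightcutshape}, and uses both as black boxes. You show instead that the statement is an elementary corollary of the forward direction of \zcref{lem:tightcutshape}. In fact you only need that $\partial(X)$ is a matching of three edges, not that it is induced.

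From that point the steps are short:
\begin{itemize}
    \item Simplicity and cubicity of the contraction are immediate.
    \item Your double count $3|X\cap A|-k_A=3|X\cap B|-k_B$ forces all cut edges to have their $X$-ends in one colour class. This is the same count the paper itself uses in \zcref{claim:nobridging}.
    \item Edge cuts lift verbatim through the contraction, so 3-edge-connectivity transfers. It then upgrades to 3-connectivity because $\kappa=\lambda$ for cubic graphs.
\end{itemize}
The citation buys brevity. Your route buys transparency and makes clear that nothing beyond the shape of the cut is used.

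Two small remarks. First, your one-line justification of $\kappa=\lambda$ glosses over separator vertices that are adjacent to each other. Such a vertex has exactly one neighbour on each side, and all such vertices must be cut on the same side, as in the standard proof. It is cleaner simply to cite that standard result. Second, the fallback in your last paragraph is not a full replacement for steps 1--3. Connectivity of $G[X]$ only lifts separators of $G'$ that avoid $c$, so separators such as $\{c,z\}$ would still need a separate argument.
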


This pair of results now directly allows us to reach our first goal of this section.

\begin{lemma}\label{lem:tightcutspreservecycconf}
    Let $G$ be a cubic, matching covered, bipartite graph with a tight cut $\partial(X)$ and let $G_1,G_2$ be the two distinct tight cut contractions associated with $\partial(X)$.
    Then, if $G_1$ and $G_2$ are cycle-conformal, then $G$ is cycle-conformal.
\end{lemma}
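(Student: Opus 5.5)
We handle the trivial cases first. If $\partial(X)$ is trivial, one of $G_1,G_2$ is isomorphic to $G$ and there is nothing to show, so assume $\partial(X)$ is non-trivial. Since $G$ is cubic, bipartite and matching covered, it is 2-connected. We first reduce to the 3-connected case or handle 2-cuts directly. In a cubic bipartite matching covered graph, a 2-edge-cut cannot occur, since parity forces odd cuts to be balanced. A 2-vertex separator gives a tight cut of a specific shape, but the cleanest route is to argue that a non-trivial tight cut in a cubic bipartite graph is always a 3-edge cut. Every tight cut $\partial(X)$ in a bipartite graph has the form where $X$ has exactly one more vertex of one colour, say white, and every edge of $\partial(X)$ has its end in $X$ white. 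Counting degrees, $3|X\cap W| - 3|X\cap B| = |\partial(X)|$, so $|\partial(X)| = 3$. Thus in all cases the cut consists of exactly three edges. This is the content behind \zcref{lem:tightcutshape}, and we can invoke that lemma together with \zcref{lem:tightcutpreserve} when $G$ is 3-connected. The counting argument above covers the general case anyway.

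So let $\partial(X) = \{e_1,e_2,e_3\}$ with $e_i = x_iy_i$, where $x_i \in X$ are white (possibly not distinct in degenerate cases) and $y_i \in \overline{X}$. Let $G_1 = G[\overline{X}\mapsto c_1]$ and $G_2 = G[X \mapsto c_2]$. Take an even cycle $C$ of $G$. Since $C$ crosses a cut an even number of times and $|\partial(X)|=3$, we have $|E(C)\cap\partial(X)| \in \{0,2\}$.

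\textbf{Case 1: $C$ lies inside $G[X]$ (or symmetrically inside $G[\overline{X}]$).} Then $C$ is an even cycle of $G_1$, so $G_1 - V(C)$ has a perfect matching $M_1$. The vertex $c_1$ is matched by some edge corresponding to $e_i$. Now $G_2$ is matching covered by \zcref{lem:tightcutpreservematcov}, so there is a perfect matching $M_2$ of $G_2$ containing the edge corresponding to $e_i$. Then $M_1 \cup M_2$, with the edges through $c_1,c_2$ identified with $e_i$, is a perfect matching of $G - V(C)$.

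\textbf{Case 2: $C$ uses exactly two cut edges, say $e_1,e_2$.} Then $C$ splits into an $x_1$-$x_2$-path $P$ in $G[X]$ and a $y_1$-$y_2$-path $Q$ in $G[\overline{X}]$. In $G_1$, the subgraph $C_1 = P + c_1x_1 + c_1x_2$ is a cycle, and in $G_2$, $C_2 = Q + c_2y_1 + c_2y_2$ is a cycle. These are cycles of bipartite graphs, hence even. Cycle-conformality gives a perfect matching $M_1$ of $G_1 - V(C_1)$ and $M_2$ of $G_2 - V(C_2)$. Neither contains an edge at $c_1$ or $c_2$, so $M_1 \cup M_2$ is a perfect matching of $G - V(C)$, since $V(C) = (V(C_1)\setminus\{c_1\}) \cup (V(C_2)\setminus\{c_2\})$.

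The main point to verify is that contraction does not create parallel-edge issues: two cut edges sharing an endpoint $x_1 = x_2$ would make $C_1$ degenerate. However, if $x_1=x_2$, the path $P$ would be trivial and $C$ would pass through $x_1$ using two cut edges. This is still fine: then $V(C_1)\setminus\{c_1\}=\{x_1\}$, and we instead argue directly. In $G_1$, the vertex $x_1$ is adjacent to $c_1$ by one edge and has a third neighbour, and we need a perfect matching of $G_1 - \{x_1, c_1\}$. Such a matching exists because $G_1$ is matching covered and the edge $x_1c_1$ lies in some perfect matching. The same reasoning applies on the other side. With this degenerate case handled, every even cycle of $G$ is conformal.
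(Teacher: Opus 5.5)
Your proof is correct. Its skeleton matches the paper's: when $C$ avoids the cut, you take the conformal matching in the contraction containing $C$ and repair the contraction vertex using that the other contraction is matching covered; when $C$ uses two cut edges, you apply cycle-conformality in both contractions. The difference is in how you control the cut, and here your version is more elementary and more general.

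The paper gets $|\partial(X)|=3$, and the fact that $\partial(X)$ is an induced matching, from McCuaig's \zcref{lem:tightcutshape}. That lemma requires $G$ to be 3-connected, which the statement does not assume. Cubic bipartite matching covered graphs need not be 3-connected: delete an edge $p_iq_i$ from the $i$-th of two copies of $K_{3,3}$ and add $p_1q_2,p_2q_1$. Such graphs can have non-trivial tight cuts two of whose edges share an end. Your count $3(|X\cap W|-|X\cap B|)=|\partial(X)|$ gives three cut edges for every tight cut. Your separate treatment of $x_1=x_2$, via a perfect matching of $G_1$ through $x_1c_1$, handles exactly the cuts that are not matchings. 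So your argument proves the lemma as stated, not only for 3-connected $G$.

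Your Case~2 assembly is also the right one. Since $c_i\in V(C_i)$, the matchings $M_1,M_2$ avoid the contraction vertices, and $M_1\cup M_2$ is already a perfect matching of $G-V(C)$. The paper's write-up at this step instead has $M_i$ use $u_ic$ and then adds $u_1u_2$, which is inconsistent with $c$ having been deleted.

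One sentence of yours should be removed: a cubic bipartite matching covered graph \emph{can} have a 2-edge-cut, since one white and one black end on each shore passes your parity test. In fact the case $x_1=x_2$ is where one appears: the third edge at $x_1$ together with $e_3$ separates $X\setminus\{x_1\}$. Nothing in your argument depends on the false claim.
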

\begin{proof}
    Let $C$ be an even cycle in $G$.
    Suppose first that there exists an $i \in [2]$ such that $C \subseteq G_i$, then let $M_i$ be the perfect matching in $G_i - V(C)$ that must exist.
    Let $e = uc \in M_i$ be the unique edge incident to the contraction vertex $c$ in $G_i$ and let $uv \in \partial(X)$ be an edge incident to $u$ in $\partial(X)$, which must exist since $u$ is incident to $c$ in $G_i$.
    Note that this implies that $v \in V(G_{3-i})$ and $vc \in E(G_{3-i})$, where $c$ is the contraction vertex in $G_{3-i}$.
    Since $G_{3-i}$ is matching covered, there exists a perfect matching $M_{3-i}$ of $G_{3-i}$ that uses $vc$.
    Now, $(M_1 \cup M_2 \cup \{ uv \}) \setminus \{ uc, vc \}$ is a perfect matching of $G - V(C)$.

    We may therefore suppose that $E(C) \cap \partial(X) \neq \emptyset$.
    In particular, due to \zcref{lem:tightcutshape}, we have $|E(C) \cap \partial(X)| = 2$, as any cycle contains an even number of edges of any edge-cut in a graph.
    Let $u_1u_2 \in \partial(X) \setminus E(C)$ be the one edge in the cut that is not found in $E(C)$, with $u_i \in V(G_i)$ for both $i \in [2]$.
    Further, note that due to $\partial(X)$ being an induced matching according to \zcref{lem:tightcutshape}, for both $i \in [2]$, the subgraph $C_i \subseteq G_i$ corresponding to $C$ in $G_i$.
    We note that $u_ic \in E(G_i)$, $C_i$ is an even cycle, and $u_i \not\in V(C_i)$.

    Since $G_i$ is cycle-conformal for both $i \in [2]$, the graph $G_i - V(C_i)$ contains a perfect matching $M_i$ that must use $u_ic$, since $c$ has degree 1 in $G_i - V(C_i)$.
    This immediately implies that $(M_1 \cup M_2 \cup \{ u_1u_2 \}) \setminus \{ u_1c, u_2c \}$ is a perfect matching of $G - V(C)$ and thus we are done.
\end{proof}

\zcref{lem:tightcutpreserve}, \zcref{lem:tightcutspreservecycconf}, and \zcref{thm:tightcutpreserveeasy} together imply \zcref{thm:cubictightcuts}.
It is reasonable to ask whether \zcref{lem:tightcutspreservecycconf} can be extended to $\ell$-regular, matching covered, bipartite graphs for $\ell \geq 4$.
However, this can easily be seen to be false through sticking together two copies of $K_{\ell,\ell}$.

\begin{lemma}\label{lem:counterexample}
    Let $\ell \geq 4$ be an integer and let $G$ be the graph derived from two copies $H_1,H_2$ of $K_{\ell,\ell}$ by deleting a white vertex of $H_1$, deleting a black vertex of $H_2$, and subsequently joining all black vertices of $H_1$ to all white vertices of $H_2$ via edges (see \zcref{fig:counterexample} for an example with $\ell = 4$).
    Then $G$ is matching covered, bipartite, and not cycle-conformal.
    However $G$ contains a tight cut such that both associated tight cut contractions are cycle-conformal braces.
\end{lemma}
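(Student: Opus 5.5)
The plan is to verify the four claims directly, using the tight cut $\partial(X)$ with $X \coloneqq V(H_1)$ (after the deletion) and the colour counts on the two sides of this cut. First I fix the counts. After the deletions, $H_1$ has $\ell$ black and $\ell-1$ white vertices, and $H_2$ has $\ell$ white and $\ell-1$ black vertices. The only edges between the two sides go from black vertices of $H_1$ to white vertices of $H_2$, so $G$ is bipartite and balanced. Every edge of $\partial(X)$ has its end in $X$ black. Hence for any perfect matching $M$, counting the vertices of $X$ gives $|M \cap \partial(X)| = |X \cap B| - |X \cap W| = 1$, so $\partial(X)$ is tight; it is clearly non-trivial.

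Next I identify the contractions. In $G[\overline{X} \mapsto c]$ the vertex $c$ is adjacent to exactly the $\ell$ black vertices of $H_1$, so this graph is $K_{\ell,\ell}$. Symmetrically, $G[X \mapsto c]$ is $K_{\ell,\ell}$. The graph $K_{\ell,\ell}$ is a brace, and it is cycle-conformal: deleting an even cycle removes equally many vertices of each colour and leaves a balanced complete bipartite graph, which has a perfect matching. For matching coveredness of $G$ I use the standard gluing argument. Given an edge $e$ of $G$, I choose perfect matchings $M_1, M_2$ of the two contractions whose edges at $c$ correspond to the same cut edge $bw$, with one of them containing $e$. This is possible because each contraction is complete bipartite, so any edge at $c$, as well as any edge together with a prescribed compatible edge at $c$, extends to a perfect matching. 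The set $(M_1 \cup M_2 \cup \{bw\})$ minus the two edges at $c$ is then a perfect matching of $G$ containing $e$. Connectivity and $|V(G)| \geq 4$ are clear.

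The main step, and the place where $\ell \geq 4$ is needed, is to exhibit a non-conformal even cycle. The counting above shows a useful fact. If a cycle crosses $\partial(X)$ exactly $2k$ times, each passage through $X$ runs from a black vertex to a black vertex, so it removes $k$ more black than white vertices from $X$. If $G - V(C)$ had a perfect matching $M'$, every edge of $M'$ in $\partial(X)$ would again cover a black vertex of $X$, and the remaining white surplus in $X$ could not be covered. So any cycle with $k = 2$ is non-conformal. I take two black vertices $b_1, b_2$ of $H_1$, four distinct white vertices $w_1, w_2, w_3, w_4$ of $H_2$, and two distinct black vertices $b', b''$ of $H_2$, and form the $8$-cycle
\[
b_1 w_1 b' w_2 b_2 w_3 b'' w_4 b_1 .
\]
This requires $\ell \geq 4$ white vertices in $H_2$ and $\ell - 1 \geq 2$ black ones there. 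Deleting this cycle leaves the white vertices of $H_1$ (there are $\ell - 1$ of them, with neighbours only inside $H_1$) facing only $\ell - 2$ black vertices of $H_1$. Hall's condition therefore fails, so $G$ is not cycle-conformal. I expect this choice of cycle, rather than any of the routine verifications, to be the only non-obvious point.
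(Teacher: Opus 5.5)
Your proof is correct and follows essentially the same route as the paper: the cut $\partial(V(H_1))$ is tight with both contractions isomorphic to $K_{\ell,\ell}$, and an even cycle crossing this cut four times leaves a Hall violation inside one half. Your $8$-cycle, which uses only two black vertices of $H_1$, is a smaller witness than the paper's $12$-cycle. Your colour count showing that any cycle crossing the cut four times is non-conformal makes the Hall violation transparent, and you also spell out the tightness and matching-covered checks that the paper leaves as ``easy to observe''.
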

\begin{proof}
For both $i \in [2]$ let the white vertices of $H_i$ be $w_1^i, \ldots , w_\ell^i$ and the black vertices of $H_i$ be $b_1^i, \ldots , b_\ell^i$.
We assume that $V(H_1 \cup H_2) \setminus V(G) = \{  w_\ell^2, b_\ell^1 \}$ and let $F \coloneqq E(G) \setminus E(H_1 \cup H_2)$ be the set of edges introduced in the construction of $G$.
It is easy to observe that $G$ is matching covered and bipartite.
Clearly, $F$ is a tight cut and both tight cut contractions associated with $F$ yield $K_{\ell,\ell}$.

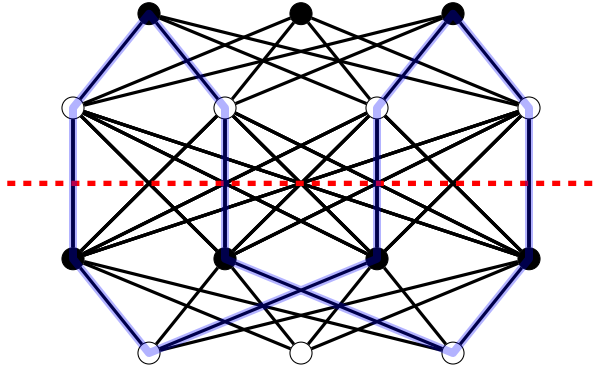
\begin{figure}[ht]
        \centering
            \begin{tikzpicture}

            \node (A1) at (1,5.25) [draw, circle, scale=0.8, fill] {};
            \node (A2) at (3,5.25) [draw, circle, scale=0.8, fill] {};
            \node (A3) at (5,5.25) [draw, circle, scale=0.8, fill] {};
            
            \node (B1) at (0,4) [draw, circle, scale=0.8] {};
            \node (B2) at (2,4) [draw, circle, scale=0.8] {};
            \node (B3) at (4,4) [draw, circle, scale=0.8] {};
            \node (B4) at (6,4) [draw, circle, scale=0.8] {};
            
            \node (C1) at (0,2) [draw, circle, scale=0.8, fill] {};
            \node (C2) at (2,2) [draw, circle, scale=0.8, fill] {};
            \node (C3) at (4,2) [draw, circle, scale=0.8, fill] {};
            \node (C4) at (6,2) [draw, circle, scale=0.8, fill] {};

            \node (D1) at (1,0.75) [draw, circle, scale=0.8] {};
            \node (D2) at (3,0.75) [draw, circle, scale=0.8] {};
            \node (D3) at (5,0.75) [draw, circle, scale=0.8] {};

            \node (Z1) at (-1,3) [draw=none] {};
            \node (Z2) at (7,3) [draw=none] {};

            \foreach\i in {1,2,3}
			{
				\foreach\j in {1,2,3,4}
                {
				    \path (A\i) edge[very thick] (B\j);
                    \path (D\i) edge[very thick] (C\j);
                    \foreach\k in {1,2,3,4}
                    {
                        \path (B\j) edge[very thick] (C\k);
                    }
			    }
            }

            \path (Z1) edge[dashed,red, line width=2pt] (Z2);

            \draw[blue, line width=3pt, opacity=0.3, line join=round]
                (1,5.25) -- (0,4) -- (0,2) -- (1,0.75) -- (4,2) -- (4,4) -- (5,5.25) -- (6,4) -- (6,2) -- (5,0.75) -- (2,2) -- (2,4) -- cycle;
            ;

		\end{tikzpicture}
     \caption{One of the graphs described in \zcref{lem:counterexample} with $\ell = 4$. The tight cut $F$ is indicated by the red dashed line and the problematic cycle $C$ is traced in blue.}
     \label{fig:counterexample}
\end{figure}

We now consider the cycle $C \subseteq G$ formed by visiting the vertices
\[ b_1^1, w_1^1, b_1^2, w_1^2, b_{\ell-1}^2, w_{\ell-1}^1, b_{\ell-1}^1, w_\ell^1, b_\ell^2, w_2^2, b_2^1, w_2^1, b_1^1 \]
in order (see \zcref{fig:counterexample} for reference).
Since $G$ is bipartite, $C$ is even and thus $G - V(C)$ must have a perfect matching.
However, $V(G - V(C))$ contains $\ell - 3$ black vertices $b_2^1, \ldots , b_{\ell-2}^1$, which only have $\ell - 4$ neighbours $w_3^1, \ldots , w_{\ell-3}^1$ in $G - V(C)$, contradicting the existence of a perfect matching in this graph.
Thus $G$ has all of the desired properties.
\end{proof}

We now turn to the issue of characterising matching covered, Pfaffian, bipartite, cycle-conformal graphs and thus the proof of \zcref{thm:pfaffcycconfcharacterisation}.
First we show that any such graph is planar.

\begin{theorem}
    Every matching covered, Pfaffian, bipartite, cycle-conformal graph is planar.
\end{theorem}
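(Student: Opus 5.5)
We argue by induction on $|V(G)|$, decomposing $G$ along tight cuts. Let $G$ be matching covered, Pfaffian, bipartite and cycle-conformal.

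\textbf{Base case.} If $G$ is a brace, then \zcref{thm:main1} gives $G = C_4$, which is planar.

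\textbf{Inductive step.} Otherwise, by \zcref{lem:findbrickbrace} there is a non-trivial tight cut $\partial(X)$ such that one contraction, say $G_1 = G[\overline{X} \mapsto c]$, is a brace. Let $G_2 = G[X \mapsto c]$ be the other contraction. We establish the following properties of the two contractions.
\begin{enumerate}
\item By \zcref{lem:tightcutpreservematcov}, both $G_1$ and $G_2$ are matching covered and bipartite.
\item By \zcref{thm:tightcutpreserveeasy}, both are cycle-conformal.
\item Both are Pfaffian. This is the standard fact that Pfaffianness passes to tight cut contractions (Vazirani--Yannakakis, cited in the introduction). Concretely, in the bipartite case each contraction is, up to deleting the vertex of degree at least two obtained by contracting, a matching minor of $G$.
\end{enumerate}
Hence $G_1 = C_4$ by \zcref{thm:main1}, and $G_2$ is planar by induction.

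It remains to reassemble $G$ from $G_2$ and $G_1 = C_4$. Since $G_1$ is a $4$-cycle containing the contraction vertex $c$, the shore $X$ consists of three vertices $x, y, z$. Here $y$ has the colour opposite to $c$, and $x, z$ have the same colour as $c$. The edges $xy$ and $yz$ are present, and $c$ is joined to $x$ and to $z$. So $\partial(X)$ consists only of edges at $x$ and $z$. Moreover, $y$ has no neighbours outside $X$, since otherwise $c$ would be adjacent to $y$ in $G_1$, which is impossible in a bipartite $C_4$ where $c$ and $y$ are opposite.

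Thus $G$ is obtained from $G_2$ by splitting the vertex $c$: its neighbourhood is partitioned into $N_x \cup N_z$, with $N_x$ and $N_z$ possibly overlapping, and we join $x$ to $N_x$, $z$ to $N_z$, and add a new vertex $y$ adjacent to both $x$ and $z$. If $N_x$ and $N_z$ occupy complementary intervals in the cyclic order of edges around $c$ in some plane drawing of $G_2$, then the split is planar and we are done.

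\textbf{Main obstacle.} The hard case is when no plane drawing of $G_2$ has this interval property. In this case we will show that cycle-conformality of $G$ fails. The intended approach is as follows.

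If $N_x$ and $N_z$ interleave around $c$, pick $u_1, u_3 \in N_x$ and $u_2, u_4 \in N_z$ in cyclic order. Then build a bisubdivided $K_{2,3}$ in $G$ as in the proof of \zcref{lem:planarbraces}, using $x$-$y$-$z$ as one of its branch paths, so that some region containing an odd number of vertices is cut off by an even cycle through $y$.

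Alternatively, we can use a parity argument: two vertices of the same colour that lie in $N_x \cap N_z$, together with paths of $G_2 - c$, yield an even cycle in $G$ through $x$ and $z$ but avoiding $y$. Deleting this cycle isolates $y$, so the cycle cannot be conformal.

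Because $N_x \cap N_z$ may be empty, we expect to need 3-connectivity-type properties of $G_2$ near $c$, obtained via matching-coveredness, together with Whitney's uniqueness theorem or a Kuratowski-style argument (\zcref{thm:kuratowski}). The idea is that a $K_{3,3}$-subdivision in $G$ passes through the split, and it should produce a non-conformal even cycle whose removal leaves $y$, or an odd set, stranded.

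If this direct route proves too messy, the fallback is to strengthen the induction hypothesis to include the structural description of these graphs: iterated splicing of $C_4$s along degree-two configurations, matching the known planar characterisation. We would then verify that any non-planar splice produces an even cycle through $x, z$ whose deletion isolates $y$.
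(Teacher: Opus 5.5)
\textbf{There is a genuine gap: the decisive step is not proved.} Your reduction is fine and matches the paper's. You split off braces along tight cuts, note that they are Pfaffian and cycle-conformal and hence equal to $C_4$, and arrive at a graph $G$ obtained from a planar, cycle-conformal $G_2$ by replacing the contraction vertex $c$ with a path $x\,y\,z$, where $N_G(y)=\{x,z\}$. (A small slip: $y$ is opposite $c$ on the $4$-cycle, so it has the \emph{same} colour as $c$, while $x$ and $z$ have the other colour. Nothing depends on this.)

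The step that actually carries the theorem is that such a $G$ cannot be non-planar, and you do not prove it. You list an intended $K_{2,3}$ construction, a special case via common neighbours of $x$ and $z$, an appeal to Whitney and Kuratowski that you ``expect to need'', and a fallback strengthened induction, but none of these is carried out. So the proposal is incomplete exactly where the difficulty lies. The framing via intervals of the rotation at $c$ in ``some plane drawing'' of $G_2$ also makes the problem look harder than it is. Since $G_2$ need not be $3$-connected, you would have to range over all of its embeddings.

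The missing observation is this. Because $G$ is bipartite, \emph{every} cycle of $G$ is even, and any cycle through both $x$ and $z$ that avoids $y$ leaves $y$ isolated when deleted. So cycle-conformality says precisely that $G-y$ has no cycle containing both $x$ and $z$. Restricting to same-coloured common neighbours, and worrying that $N_x\cap N_z$ may be empty, is beside the point.

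The paper then concludes with Kuratowski directly, with no drawings. Take a $K_5$- or $K_{3,3}$-subdivision $H\subseteq G$; the vertex $y$ cannot be a branch vertex. If at most one of $x,z$ is a branch vertex, then $N_G(x)\cup N_G(z)\subseteq N_{G_2}(c)\cup\{y\}$. Contracting $\{x,y,z\}$ to $c$ therefore turns $H$ into a Kuratowski subdivision of the planar graph $G_2$, a contradiction. If both are branch vertices, $H$ contains a cycle through $x$ and $z$ avoiding $y$, which is forbidden.

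When you write this out, handle one configuration the contraction step does not cover: $x$ and $z$ both lie on $H$ but $y\notin V(H)$, for instance on different branch paths. There the $2$-connectivity of $H$ again gives a forbidden cycle through $x$ and $z$.

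Alternatively, the observation together with Menger yields a cut vertex $t$ of $G-y$ separating $x$ from $z$. Then $\{y,t\}$ is a $2$-separation of $G$. Each of its two sides, augmented by the edge $yt$, is a subdivision of a minor of $G_2$. Gluing two planar graphs along an edge preserves planarity, so $G$ is planar.
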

\begin{proof}
    Suppose that there exists a counterexample to this and thus a non-planar, matching covered, Pfaffian, bipartite, cycle-conformal graph.
    Thanks to \zcref{thm:tightcutpreserveeasy}, we know that all braces belonging to this graph are isomorphic to $C_4$.
    According to \zcref{lem:findbrickbrace} we can find a tight cut in this graph that only splits off a copy of $C_4$ and furthermore, again using \zcref{thm:tightcutpreserveeasy}, we note that the other tight cut contraction that results remains cycle-conformal.
    By iterating this procedure, we may therefore assume that we can reduce our counterexample to a graph $G$ that is non-planar, matching covered, Pfaffian, bipartite, cycle-conformal, and contains a tight cut $\partial(X)$ such that one of the two tight cut contractions $G_1,G_2$ corresponding to $\partial(X)$, say $G_2$, is isomorphic to $C_4$ and the other is planar, matching covered, bipartite, and cycle-conformal.
    In particular, the vertex in $G_2$ that is incident to none of the edges of $\partial(X)$ in $G$ must have degree 2 in $G$ as well.
    
    We choose $X$ such that $X \subseteq G_2$ and thus $|X| = 3$.
    Specifically, we let $X = \{ u,v,w \}$, with $v$ being the vertex of degree 2 in both $G_2$ and $G$ that is not incident to any of the edges in $\partial(X)$.
    We call the contraction vertex $c$ in both $G_1$ and $G_2$.
    
    Since $G$ is non-planar it contains a subdivision $H$ of $K_5$ or $K_{3,3}$ as a subgraph.
    Let $V' \subseteq V(H)$ be the vertices of degree at least 3 in $H$.
    Clearly, if $V' \cap X = \emptyset$, then a subdivision of the graph corresponding to $H$ also exists in $G_1$, contradicting the fact that $G_1$ is planar.
    Thus, we must have $V' \cap X \neq \emptyset$.
    Since $v$ has degree two, we have $v \not\in V'$ and therefore know that $1 \leq |V' \cap X| \leq 2$.

    Suppose first that $|V' \cap X| = 1$.
    This vertex is either $u$ or $w$.
    However, by definition of tight cut contractions, the neighbourhood of both $u$ and $w$ is a subset of the neighbourhood of $c$ in $G_1$.
    Thus, we can again find a subdivision of the graph corresponding to $H$ in $G_1$, once more contradicting its planarity.

    We therefore now know that $V' \cap X = \{ u,w \}$.
    Now, observe that $\{ u,w \}$ is a separator in $G$, which in particular separates $v$ from the remainder of the graph.
    Furthermore, since $u,w$ are found in $H$ then, independent of whether $H$ is a subdivision of $K_5$ or $K_{3,3}$, $H$ contains a cycle $C$ containing both $u$ and $w$.
    In particular, $C$ can be chosen such that it avoids $v$, if $v \in V(H)$, since $v$ would lie in a subdivided edge of $K_5$ or $K_{3,3}$, any of which can be avoided in a cycle using $u$ and $w$ in $H$.
    We conclude that $G - V(C)$ cannot contain a perfect matching, since $v$ is an isolated vertex in this graph, and thus $G$ is not cycle-conformal, a contradiction.
\end{proof}

Now, mainly for the convenience of the reader, we state one of the many characterisations of matching covered, planar, bipartite, cycle-conformal graphs (see ~\cite{GuoZ2004Reducible,KlavzarS2012Characterization,ZhangL2012Computing,Pause2022Planar,Kuske2024Characterization}).
Here we use one of the characterisations found by Kuske~\cite{Kuske2024Characterization}.

Let $H$ be a graph and let $e = uv \in E(H)$ be an edge of $H$.
A \emph{bisubdivision (of $H$ at $e$)} is a subdivision of $H$ that is achieved by subdividing the edge $e$ with an even number of subdivision vertices.
We say that $H'$ is constructed from $H$ via the \emph{3-path operation (at $e$)} if $H'$ is derived from $H$ by adding the vertices $x_1,x_2 \not\in V(H)$ and the edges $ux_1, x_1x_2, x_2v$.

\begin{theorem}[Kuske~\cite{Kuske2024Characterization}]\label{thm:planarcharacterisation}
    A matching covered, planar, bipartite graph is cycle-conformal if and only if it can be constructed from $C_4$ via bisubdivisions and the 3-path operation.
\end{theorem}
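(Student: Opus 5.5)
The statement is cited from Kuske, but I would prove it directly. For the backward direction I would induct on the number of operations. The base case is $C_4$, which is clearly cycle-conformal and matching covered.

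\emph{Bisubdivision step.} Suppose $e=uv$ is replaced by a path $u x_1 \cdots x_{2k} v$. Every even cycle of the new graph either avoids the new path, or contains the whole path and so corresponds to an even cycle of $H$ through $e$. In the first case the cycle misses some $x_i$. A perfect matching of $H - V(C)$ that does not use $e$ extends by matching the $x_i$ consecutively. If the matching does use $e$, we replace $e$ by $ux_1, x_2x_3, \dots, x_{2k}v$. In the second case we keep the perfect matching of $H-V(C)$ unchanged.

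\emph{3-path step at $e=uv$.} Even cycles through $x_1,x_2$ correspond to even cycles of $H$ through $e$ (in $H+e$). If $e\notin E(H)$, such cycles instead pass through a $u$-$v$-path. An even cycle avoiding $x_1x_2$ is handled by adding $x_1x_2$ to a perfect matching of $H-V(C)$. An even cycle containing exactly one of $x_1,x_2$ is impossible. Planarity and the matching covered property are preserved by both operations. The one delicate point is an even cycle $C$ through $x_1$ and $x_2$ when $uv \notin E(H)$. Here I would strengthen the induction hypothesis to \emph{every even cycle and every even-length $u$-$v$-path closing an odd cycle with $e$ is conformal}. Equivalently, I would run the induction on the graph with $e$ present, using the fact that the 3-path operation applied to $C_4$-derived graphs only ever adds paths parallel to existing edges.

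For the forward direction, let $G$ be a matching covered, planar, bipartite, cycle-conformal graph. By \zcref{thm:tightcutpreserveeasy} and \zcref{lem:planarbraces}, all braces of $G$ are $C_4$. I would induct on $|V(G)|$. If $G$ has a vertex of degree $2$ whose neighbours both have degree $2$, then $G$ contains a path $x_0x_1x_2x_3$ with $x_1,x_2$ of degree $2$. I would show that $G$ is then a bisubdivision of a smaller graph $G'$, or else is obtained by the 3-path operation from $G-\{x_1,x_2\}$. I would then check that $G'$ (respectively $G-\{x_1,x_2\}$) stays matching covered and cycle-conformal, so induction applies. Otherwise I would use the tight cut decomposition: choose, by \zcref{lem:findbrickbrace}, a tight cut whose small side is a $C_4$ with $X=\{u,v,w\}$ and $\deg v = 2$. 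I would then show that $\deg u$ and $\deg w$ force one of the two reductions above. Otherwise, using the separator $\{u,w\}$ as in the preceding theorem, I would find an even cycle through $u,w$ that avoids $v$, which isolates $v$ and contradicts cycle-conformality.

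I expect the main obstacle to be the forward direction. Specifically, it is hard to show that a non-conformal structure must arise unless the local picture around a degree-$2$ vertex is exactly a subdivided edge or a parallel 3-path. Extracting the required even cycle needs the planar facial-cycle arguments in the style of \zcref{lem:planarbraces}.
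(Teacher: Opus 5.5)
\textbf{Verdict: your backward direction and your reduction step are correct, but the remaining case of the forward direction has a genuine gap.}

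\textbf{What works.} In the backward direction the ``delicate point'' never arises. The 3-path operation is only applied at an existing edge $e=uv\in E(H)$. So an even cycle through $x_1x_2$ is $ux_1x_2v$ together with an odd $u$-$v$-path $P$ of $H$. It is conformal: either $P=e$, and $e$ lies in a perfect matching of $H$, or $P+e$ is an even cycle of $H$. No strengthened hypothesis is needed. In the forward direction, your reduction is also correct: two adjacent degree-$2$ vertices $x_1,x_2$ let you undo a bisubdivision or a 3-path operation and stay matching covered, planar, bipartite and cycle-conformal.

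\textbf{The gap.} You claim that if the neighbours $u,w$ of the degree-$2$ vertex $v$ of a $C_4$-contraction both have degree at least $3$, then some cycle through $u$ and $w$ avoids $v$. This fails for cycle-conformal graphs. Apply the 3-path operation to the 4-cycle $pqrs$ at $pq$ and at $qr$. The result is matching covered, planar, bipartite and cycle-conformal. Moreover, $\partial(\{p,s,r\})$ is a tight cut whose small-side contraction is $C_4$ with $v=s$, and $\deg p=\deg r=3$. Yet $q$ is a cut vertex of $G-s$ separating $p$ from $r$, so no cycle through $p$ and $r$ avoids $s$. In the preceding theorem such a cycle existed only because $u,w$ were branch vertices of a Kuratowski subdivision; nothing comparable is available here. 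This example has adjacent degree-$2$ vertices elsewhere, so it does not contradict your case split. It does show that no argument local to $u,v,w$ can close the case. You must use globally that $G$ has no two adjacent vertices of degree $2$, and your sketch never does.

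\textbf{A possible repair.} This also makes the tight-cut detour unnecessary.
\begin{enumerate}
\item The proof of \zcref{lem:planarbraces} uses only planarity and that $|V(G)|$ is even. Hence in a planar, bipartite, cycle-conformal graph no two vertices of the same colour are joined by three internally disjoint paths.
\item Therefore $G$ has no $K_4$-subdivision, since two same-coloured branch vertices of such a subdivision would be joined by three such paths.
\item If $G$ is a cycle other than $C_4$, it has adjacent degree-$2$ vertices and you are done. Otherwise suppress the degree-$2$ vertices of the $2$-connected graph $G$. The resulting multigraph has minimum degree at least $3$ and no $K_4$-subdivision. By Dirac's theorem it must have parallel edges, i.e.\ two branches of $G$ joining the same vertices $x,y$ of degree at least $3$.
\item If there are no two adjacent degree-$2$ vertices, both branches have length at most $2$. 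Simplicity and bipartiteness then force both to have length exactly $2$, so $x$ and $y$ have the same colour.
\item Since $G$ is $2$-connected and $\deg x,\deg y\geq 3$, there is a third $x$-$y$ path avoiding both middle vertices. This contradicts the first step.
\end{enumerate}
Thus every such $G\neq C_4$ has two adjacent degree-$2$ vertices, and your reduction completes the induction.
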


Using \zcref{thm:pfaffcycconfcharacterisation}, we can thus derive the following corollary from \zcref{thm:planarcharacterisation}.

\begin{corollary}\label{cor:Pfaffiancharacterisation}
    A matching covered, Pfaffian, bipartite graph is cycle-conformal if and only if it can be constructed from $C_4$ via bisubdivisions and the 3-path operation.
\end{corollary}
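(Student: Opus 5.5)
The statement is Corollary~\ref{cor:Pfaffiancharacterisation}, and I would obtain it by combining \zcref{thm:pfaffcycconfcharacterisation} with \zcref{thm:planarcharacterisation}. The key is that \zcref{thm:pfaffcycconfcharacterisation} collapses the Pfaffian setting to the planar one. Only one direction of \zcref{thm:pfaffcycconfcharacterisation} has been proved so far: the theorem immediately preceding this corollary shows that every matching covered, Pfaffian, bipartite, cycle-conformal graph is planar. The converse, that planar implies Pfaffian, is exactly Kasteleyn's result \zcref{thm:planarpfaff}, which applies because matching covered graphs have perfect matchings. I would record this first, so that \zcref{thm:pfaffcycconfcharacterisation} is fully available.

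\emph{Forward direction.} Let $G$ be matching covered, Pfaffian, bipartite and cycle-conformal. By the preceding theorem, $G$ is planar. Then \zcref{thm:planarcharacterisation} applies directly and shows that $G$ is constructed from $C_4$ via bisubdivisions and the 3-path operation.

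\emph{Backward direction.} Let $G$ be matching covered, Pfaffian and bipartite, and suppose $G$ is constructed from $C_4$ via bisubdivisions and the 3-path operation. To invoke \zcref{thm:planarcharacterisation} I must first know that $G$ is planar, and this is the only point needing care.

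I would prove planarity by induction on the number of operations. $C_4$ is planar. A bisubdivision replaces an edge by a path, which clearly preserves planarity. The 3-path operation at $e=uv$ adds a new $u$-$v$-path of length three. In a plane drawing this path can be drawn parallel to $e$, inside a face incident with $e$, so planarity is again preserved. Hence $G$ is planar, and \zcref{thm:planarcharacterisation} yields that $G$ is cycle-conformal.

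I do not expect a genuine obstacle. The only subtlety is noticing that the backward direction needs planarity of the constructed graph rather than anything about Pfaffians, and that this planarity follows from the form of the two operations.
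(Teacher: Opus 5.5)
Your proposal is correct and follows the paper's route: the paper also obtains the corollary by using the theorem that Pfaffian (matching covered, bipartite, cycle-conformal) implies planar, with Kasteleyn for the converse, to reduce to Kuske's planar characterisation. Your check that bisubdivisions and the 3-path operation preserve planarity is a detail the paper leaves implicit, and it is exactly what the backward direction needs.
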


\section{Cycle-conformal, cubic braces}\label{sec:cubic}
We now finally turn to the proof of \zcref{thm:k33issolo}, which resolves the case $\ell=3$ of \zcref{con:main}.
This argument is split into two parts, starting with the proof that cubic, cycle-conformal braces must contain a special subdivision of $K_{3,3}$, which we can then use as the underlying structure for our proof of \zcref{thm:k33issolo}.

\begin{figure}[ht]
    \centering
        \begin{tikzpicture}[scale=1.25]

            \pgfdeclarelayer{background}
		      \pgfdeclarelayer{foreground}
			
		      \pgfsetlayers{background,main,foreground}

            \begin{pgfonlayer}{background}
            \pgftext{\includegraphics[width=4cm]{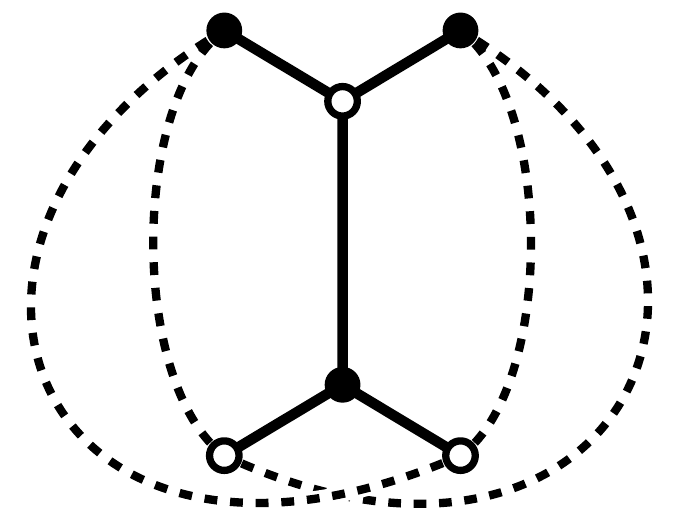}} at (C.center);
            \end{pgfonlayer}{background}
			
            \begin{pgfonlayer}{foreground}

            \node (V) at (0,-1) [draw=none] {$v$};
            \node (VPRIME) at (0.05,1.225) [draw=none] {$v'$};
            \node (e) at (0.2,0.1) [draw=none] {$e$};
            \node (U) at (-0.7,-0.9) [draw=none] {$u$};
            \node (W) at (0.7,-0.9) [draw=none] {$w$};

            \end{pgfonlayer}{foreground}
        
        \end{tikzpicture}
    \caption{An illustration of the kind of $K_{3,3}$ we purport to exist in \zcref{lem:k33subdivision}. The five bold lines represent single edges and the dashed lines represent non-trivial paths.}
    \label{fig:K33}
\end{figure}

\begin{lemma}\label{lem:k33subdivision}
Let $G$ be a cubic, cycle-conformal brace.
Then $G$ contains a subdivision $H$ of $K_{3,3}$ in which there exists an edge $e$ whose endpoints $v,v'$ have degree 3 in $H$ and such that all edges incident to $v$ or $v'$ in $H$ have the property that both of their endpoints have degree 3 in $H$ (see \zcref{fig:K33} for an illustration of such a subdivision).
\end{lemma}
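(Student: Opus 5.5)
Let $e=vv'$ be any edge of $G$, with $v$ white and $v'$ black. Let $u,w$ be the other two neighbours of $v$, and $u',w'$ the other two neighbours of $v'$. The target subdivision has branch vertices $v,u,w$ on one side and $v',u',w'$ on the other. It uses the five single edges $vv',vu,vw,v'u',v'w'$, plus four internally disjoint paths joining $\{u,w\}$ to $\{u',w'\}$ as a complete bipartite pattern. These four paths avoid $v,v'$, so they live in $G-\{v,v'\}$, and each branch vertex $x\in\{u,w,u',w'\}$ needs two of its remaining (at most two) edges used. Note that $u,w,u',w'$ are pairwise distinct, since $G$ is bipartite. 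I first dispose of the case where some of $u,w$ is adjacent to some of $u',w'$ (a 4-cycle through $e$): then those paths are single edges. These cases are handled by the same argument, with the corresponding path being trivial-length.

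The plan is to obtain the four paths from two disjoint cycles, or one cycle, via cycle-conformality. Since $G$ is a brace it is 2-extendable and 3-connected (\zcref{thm:extendabilitybasics}). Consider $G'=G-\{v,v'\}$. Here $u,w$ are black and $u',w'$ white, each of degree $2$ in $G'$, and $G'$ still has a perfect matching (extend $e$). I aim to find a single cycle $D$ in $G'$ passing through $u,u',w,w'$ in the cyclic order $u,u',w,w'$ (or $u,w',w,u'$); its four segments are then exactly the four required paths, giving the $K_{3,3}$-subdivision. Since $u,w,u',w'$ have degree $2$ in $G'$, any cycle through all four uses both of their remaining edges, as required. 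The cyclic order automatically alternates colours in the sense needed: a segment between two black vertices $u,w$ would have even length. So I must rule out orders in which $u,w$ are consecutive.

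To find $D$, use 2-extendability and the degree-3 structure. Take the edges $vu$ and $v'u'$: there is a perfect matching $M$ containing both. Symmetrically, take another perfect matching $N$ containing $vw$ and $v'w'$. Then $M\triangle N$ consists of even alternating cycles, and the component containing $v$ passes through $u,v,w$ and through $u',v',w'$. If it contains both $v$ and $v'$, it is a single cycle through all six vertices avoiding $e$. Removing $v,v'$ from it leaves two paths pairing $\{u,w,u',w'\}$. Combined with $e$ and the two spokes at each of $v,v'$, this already yields a subdivision containing a theta structure. The remaining step is to close it up into $K_{3,3}$ by adding the missing two paths. For this I intend to invoke cycle-conformality: the cycle $C'$ formed by $v$, its two path pieces, and $v'$ with $e$ is even, so $G-V(C')$ has a perfect matching. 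Feeding that matching against one from \zcref{thm:extendabilitybasics} (extending two edges at $u,w'$ say) gives an alternating cycle connecting the remaining free edges at the branch vertices, producing the missing paths. If $v,v'$ lie in different components of $M\triangle N$, I instead pick a different pair of matchings (e.g. using $vu,v'w'$) and argue that not all choices can separate them.

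The main obstacle is this last step: guaranteeing that the two closing paths are disjoint from the paths already chosen and connect the right pairs (not producing a $K_{3,3}$ where $v$ or $v'$ ends up with a subdivided incident edge). I expect to need a minimality choice, for instance taking $D$ (or the theta) with fewest vertices, and using a non-conformal-cycle argument as in \zcref{lem:counterexample}: a wrongly nested cycle would isolate an odd set or a degree-deficient vertex, contradicting cycle-conformality. This is the step I would spend the most effort on, likely with a case analysis on how the closing alternating path re-enters the existing structure.
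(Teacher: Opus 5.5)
Your reduction is right: for a fixed edge $e=vv'$, the required subdivision is exactly $e$, the edges $vu,vw,v'u',v'w'$, and a cycle $D\subseteq G-\{v,v'\}$ through $u,u',w,w'$ in which $u$ and $w$ are not consecutive. Two slips. The sides of this $K_{3,3}$ are $\{v,u',w'\}$ and $\{v',u,w\}$. Parity does not force the alternating order, since even segments between $u$ and $w$ are perfectly possible.

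Your matching construction only yields the theta graph $e\cup P_u\cup P_w$, which Menger's theorem gives directly, as in the paper. The fallback when $v,v'$ lie in different components of $M\triangle N$ is unargued but unnecessary.

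The real content of the lemma is the step from this theta to $D$, and there you give no argument. Whichever even cycle $C'$ you mean, the construction gives you no control. If $C'$ is the component of $M\triangle N$, its conformality is automatic and says nothing. Otherwise, symmetric differences of perfect matchings give alternating cycles that need not use the free third edges at $u$ and $w'$. When they do, they can run through the interiors of $P_u,P_w$ or end at the wrong vertices. Announcing a minimality choice and a case analysis describes the missing proof rather than supplying it.

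Moreover, this step cannot follow from 3-connectivity, cycle-conformality and the matchings you use; it needs the absence of non-trivial tight cuts. Take two copies of $K_{3,3}$, delete a white vertex from one and a black vertex from the other, and join the three black degree-$2$ vertices to the three white degree-$2$ vertices by a perfect matching. The result is cubic, bipartite, 3-connected and cycle-conformal (by \zcref{lem:tightcutspreservecycconf}). Your $M$ and $N$ exist, and can even be chosen so that $v,v'$ lie on a common alternating cycle. Yet if $e$ is a joining edge, $G-V(e)$ has a $2$-edge cut separating $\{u,w\}$ from $\{u',w'\}$. So every cycle through all four vertices meets $u',w'$ consecutively, and no $D$ exists.

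The paper closes this gap as follows.
\begin{enumerate}
\item Minimality yields paths $Q_u$ from $u$ to some $w''\in V(P_w)$ and $Q_w$ from $w$ to some $u''\in V(P_u)$. Here $u''\neq u$ and $w''\neq w$, since otherwise an even cycle through all of $N(v)$ isolates $v$.
\item Further $v$-isolating cycles forbid the connections in \zcref{claim:noskipping}. Rerouting to minimise $|E(P_u^1)|+|E(P_w^1)|$ then makes $\{u'',w'',v'\}$ a separator.
\item If both $P_u^1,P_w^1$ are non-trivial, $\{e_u,e_w,e\}$ is a non-trivial $3$-edge cut. The paper shows it is an induced matching, using the parity and edge-counting argument of \zcref{claim:nobridging} (which uses 2-extendability) plus 3-connectivity. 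By \zcref{lem:tightcutshape} it is then a non-trivial tight cut, contradicting that $G$ is a brace.
\item If exactly one of them is non-trivial, $\{u',u''\}$ is a $2$-separator.
\end{enumerate}
This tight-cut contradiction is the idea missing from your proposal.
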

\begin{proof}
    Let $A,B$ be the two colour classes of $G$, where we let $A$ be the white vertices and $B$ be the black vertices.
    Further, let $e = vv'$ be an arbitrary edge in $G$ with $v \in A$ and $v' \in B$, and let $u,w$ be the two neighbours of $v$ distinct from $v'$.
    
    As $G$ is 3-connected, there exist three internally disjoint $v$-$v'$-paths in $G$.
    Due to $G$ being cubic, one of these paths corresponds to $e$, another path $P_u$ contains $u$, and the third path $P_w$ contains $w$.
    Further, we know that the two neighbours of $v'$ distinct from $v$ are contained in $V(P_u \cup P_w)$.
    Let $u' \in N(v') \cap V(P_u)$ and $w' \in N(v') \cap V(P_w)$ be these two neighbours.

    Once more exploiting the 3-connectivity of $G$, we note that there also exist three internally disjoint $u$-$w$-paths in $G$.
    One of these must consist exactly of $u$, $v$, and $w$.
    For the remaining two paths $Q_1,Q_2$, we let $Q_1$ be the one that shares an edge with $P_u$ (see \zcref{fig:AB} for reference).

    \begin{figure}[ht]
    \centering
        \begin{tikzpicture}[scale=1.25]

            \pgfdeclarelayer{background}
		      \pgfdeclarelayer{foreground}
			
		      \pgfsetlayers{background,main,foreground}

            \begin{pgfonlayer}{background}
            \pgftext{\includegraphics[width=8cm]{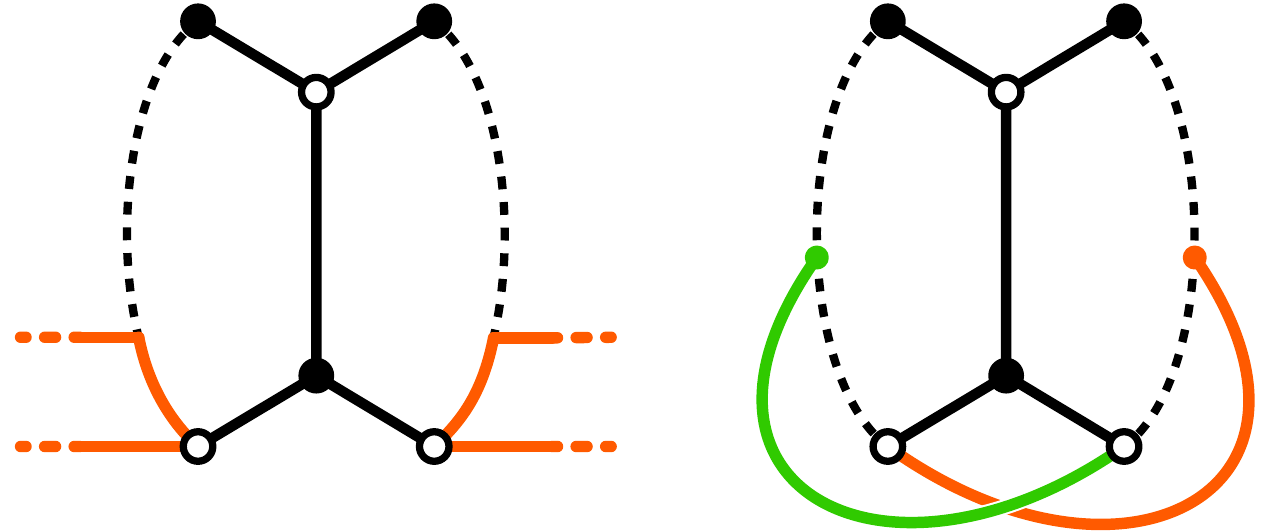}} at (C.center);
            \end{pgfonlayer}{background}
			
            \begin{pgfonlayer}{foreground}

            \node (U) at (-2.75,-1.425) [draw=none] {$u$};
            \node (V) at (-2,-0.95) [draw=none] {$v$};
            \node (W) at (-1.275,-1.425) [draw=none] {$w$};
            \node (UPRIME) at (-2.75,1.85) [draw=none] {$u'$};
            \node (VPRIME) at (-1.975,1.4) [draw=none] {$v'$};
            \node (WPRIME) at (-1.225,1.85) [draw=none] {$w'$};
            \node (Q1) at (-3.65,-0.2) [draw=none] {$Q_1$};
            \node (Q2) at (-3.65,-0.925) [draw=none] {$Q_2$};
            \node (PU) at (-3.45,0.4) [draw=none] {$P_u$};
            \node (PW) at (-0.55,0.4) [draw=none] {$P_w$};
            \node (e) at (-1.85,0.4) [draw=none] {$e$};

            \node (UPRIMEPRIME) at (1.4,0.175) [draw=none] {$u''$};
            \node (WPRIMEPRIME) at (3.3,0.175) [draw=none] {$w''$};
            \node (P1U) at (0.925,0.8) [draw=none] {$P^1_u$};
            \node (P2U) at (1.5,-0.5) [draw=none] {$P^2_u$};
            \node (P1W) at (3.75,0.8) [draw=none] {$P^1_w$};
            \node (P2W) at (3.175,-0.5) [draw=none] {$P^2_w$};
            \node (QW) at (0.825,-1.45) [draw=none] {$Q_w$};
            \node (QU) at (3.95,-1.45) [draw=none] {$Q_u$};
            
            \end{pgfonlayer}{foreground}
        
        \end{tikzpicture}
    \caption{The figure to the left represents the situation in the proof of \zcref{lem:k33subdivision} before \zcref{claim:nobouncing} and the figure to the right represents the graph $H$ we can construct prior to \zcref{claim:noskipping} with the help of \zcref{claim:nobouncing}. In the left figure $Q_1$ and $Q_2$ are indicated by the orange paths. To the right, the green path is $Q_w$ and the orange path is $Q_u$.}
    \label{fig:AB}
    \end{figure}

    We choose $P_u$, $P_w$, $Q_1$, and $Q_2$ such that $|E(P_u \cup P_w \cup Q_1 \cup Q_2)|$ is minimal.
    This allows us to make some further observations on the structure of these paths.

    \begin{claim}\label{claim:nobouncing}
        For both $i \in [2]$ and distinct $x,y \in \{ u,w \}$ there does not exist a non-trivial $V(P_x)$-path in $Q_i$ that is disjoint from $V(P_y)$.
    \end{claim}
    \emph{Proof of \zcref{claim:nobouncing}:}
    Suppose that there does exist a non-trivial $V(P_x)$-path $P$ in $Q_i$ that is disjoint from $V(P_y)$.
    Let $a,b$ be the two endpoints of $P$ and let $P_x'$ be derived from $P_x$ by replacing $aP_xb$ by $P$.
    As $aP_xb$ contains at least one edge not found in $E(P_x' \cup P_y \cup Q_1 \cup Q_2)$, this contradicts the minimality of $P_u,P_w,Q_1,Q_2$.
    \hfill$\blacksquare$

    From this claim, we derive the existence of a $u$-$V(P_w)$-path $Q_u$ that is disjoint from $\{ v,v' \} \cup (V(P_u) \setminus \{ u \})$, which is found in $Q_2$.
    Analogously there also exists a $w$-$V(P_u)$-path $Q_w$ that is disjoint from $\{ v,v' \} \cup (V(P_w) \setminus \{ w \})$.
    In particular, $Q_u$ and $Q_w$ are disjoint.
    Let $w''$ be the endpoint of $Q_u$ that is not $u$ and let $u''$ be the endpoint of $Q_w$ that is not $w$.
    We further note that $w'' \neq w$, as otherwise the cycle $Q_u \cup P_u \cup G[\{ u'v',w' \}] \cup P_w$ is a cycle whose removal isolates $v$ and thus contradicts $G$ being cycle-conformal.
    Analogously, we have $u'' \neq u$.

    From this point onwards we will no longer make use of the minimality of $P_u,P_w,Q_1,Q_2$, as we only needed it to verify the existence of $Q_u$ and $Q_w$.
    
    Let $H$ be the union of $P_u$, $P_w$, $Q_u$, $Q_w$, and the graph $(\{ u,v,w,u',v',w' \} , \{ u'v',v'w',vv',uv,vw \}) \subseteq G$.
    We further divide $P_u$ and $P_w$.
    For both $x \in \{ u,w \}$, let $P_x^1 \coloneqq x'P_xx''$ and $P_x^2 \coloneqq x''P_xx$.
    See \zcref{fig:AB} for a depiction of $H$ with all new objects labelled.
    
    To proceed, we first observe a few facts about reaching upwards to the paths $P_u^1$ and $P_w^1$ in our existing structure.
    Let $G' \coloneqq G - \{ v,v' \}$.

    \begin{claim}\label{claim:noskipping}
        For distinct $x,y \in \{ u,w \}$ the following do not exist:
        \begin{enumerate}
            \item A $V(P_x^2 - x'')$-$V(P_y^1 - y'')$-path in $G'$ that is internally disjoint from $H$.

            \item A $V(Q_y - y'')$-$V(P_x^1 - x'')$-path in $G'$ that is internally disjoint from $H$.
        \end{enumerate}
    \end{claim}
    \emph{Proof of \zcref{claim:noskipping}:}
    This claim is easy to confirm by hand by finding non-conformal cycles (see \zcref{fig:CD} for an illustration).
    \hfill$\blacksquare$

    \begin{figure}[ht]
    \centering
        \begin{tikzpicture}[scale=1.25]

            \pgfdeclarelayer{background}
		      \pgfdeclarelayer{foreground}
			
		      \pgfsetlayers{background,main,foreground}

            \begin{pgfonlayer}{background}
            \pgftext{\includegraphics[width=8cm]{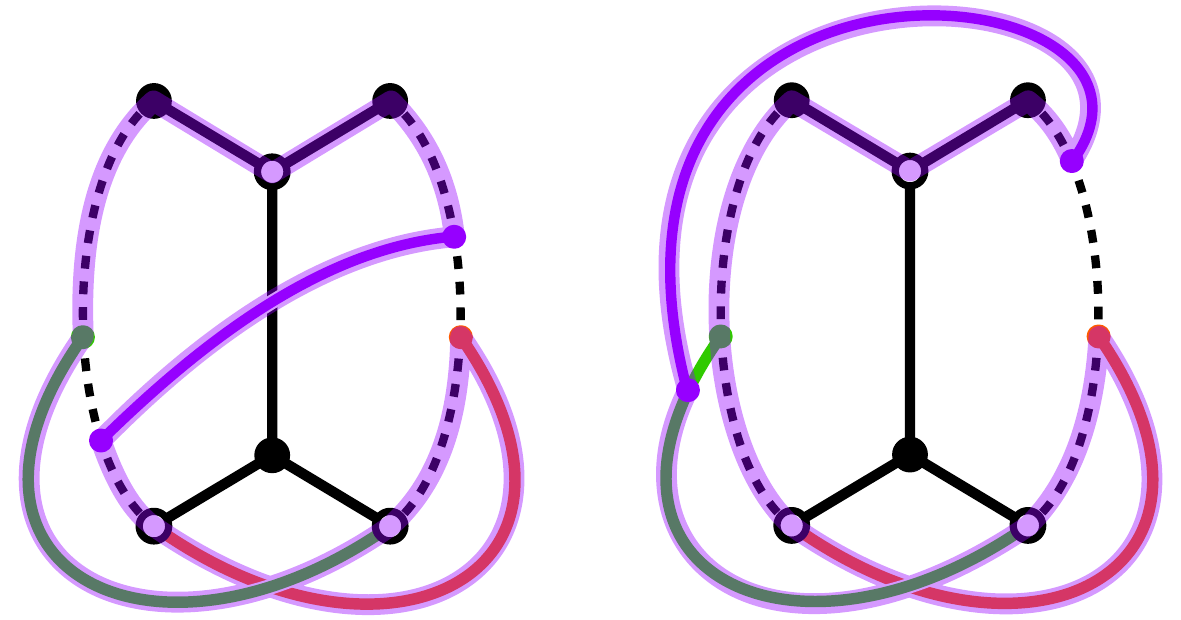}} at (C.center);
            \end{pgfonlayer}{background}
			
            \begin{pgfonlayer}{foreground}

            \node (V) at (-2.175,-1.225) [draw=none] {$v$};

            \node (VAGAIN) at (2.15,-1.225) [draw=none] {$v$};
            
            \end{pgfonlayer}{foreground}
        
        \end{tikzpicture}
    \caption{Two figures illustrating \zcref{claim:noskipping}. The figure to the left contains a $V(P_x^2 - x'')$-$V(P_y^1 - y'')$-path marked in purple and the right figure contains a $V(Q_y - y'')$-$V(P_x^1 - x'')$-path also marked in purple. In both illustrations the vertex $v$ gets isolated by an even cycle. Thus neither represented graph can be cycle-conformal.}
    \label{fig:CD}
    \end{figure}

    In other words, if we want to reach $V(P_x^1 - x'')$ in $G'$ with a path that is internally disjoint from $H$, we must start in $V(P_x^2 \cup Q_x \cup P_y^1)$.
    Furthermore, such a path cannot start in $y''$, as $G$ is cubic, two of the edges incident to $y''$ lie in $P_y$ and the other lies in $Q_x$.
    Having made this observation, our new target for minimisation is the combined length of $P_u^1$ and $P_w^1$.
    
    According to \zcref{claim:noskipping}, the set $V(P_u^1 - u'')$ may be reached from our existing structure in $G'$ via a $V(P_u^2 - u'')$-$V(P_u^1 - u'')$-path $R_1$ that is internally disjoint from $H$ or via a $V(Q_w - u'')$-$V(P_u^1 - u'')$-path $R_2$ that is internally disjoint from $H$.

    \begin{figure}[ht]
    \centering
        \begin{tikzpicture}[scale=1.25]

            \pgfdeclarelayer{background}
		      \pgfdeclarelayer{foreground}
			
		      \pgfsetlayers{background,main,foreground}

            \begin{pgfonlayer}{background}
            \pgftext{\includegraphics[width=8cm]{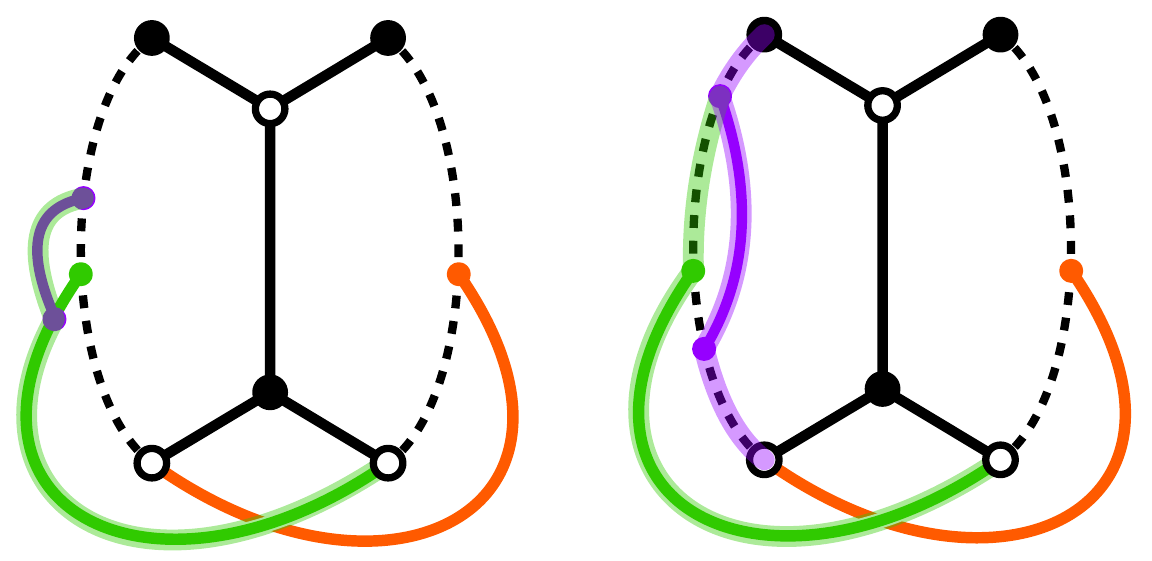}} at (C.center);
            \end{pgfonlayer}{background}
			
            \begin{pgfonlayer}{foreground}

            \node (QW) at (-3.95,-0.2) [draw=none] {$Q_w$};
            \node (R2) at (-4.05,0.3) [draw=none] {$R_2$};

            \node (QWAGAIN) at (0.4825,0.125) [draw=none] {$Q_w$};
            \node (R1) at (1.4,0.3) [draw=none] {$R_1$};
            \node (PU) at (1.2,-0.5) [draw=none] {$P_u$};
            
            \end{pgfonlayer}{foreground}
        
        \end{tikzpicture}
    \caption{Two figures illustrating the minimisation process associated with $R_1$ and $R_2$. In both examples a new candidate for $Q_w$ is highlighted in green and on the right a new candidate for $P_u$ is highlighted in purple.}
    \label{fig:FG}
    \end{figure}

    Suppose first that $R_1$ exists and let $a,b$ be its two endpoints with $b \in V(P_u^1) \setminus \{ u'' \}$.
    We may then let $P_u'$ be constructed from $P_u$ by replacing $aP_ub$ by $R_1$ and let $Q_w'$ be constructed from $Q_w$ by adding $u''P_ub$ to it (see \zcref{fig:FG}).
    The paths $P_u^1$ and $P_u^2$ can then be replaced in the natural way.
    This reduces the length of $P_u^1$.
    Importantly, this construction does not affect $P_w$ or $Q_u$.

    For the other case, let $a,b$ again be the two endpoints of $R_2$ with $b \in V(P_u^1) \setminus \{ u'' \}$.
    We may now let $Q_w'$ be constructed by combining $wQ_wa$ and $R_2$ (see \zcref{fig:FG}).
    The paths $P_u^1$ and $P_u^2$ are again replaced in the natural way.
    This reduces the length of $P_u^1$ without affecting $P_w$ or $Q_u$.

    According to what we observed above, we may repeat this operation until no path fitting the definition of $R_1$ or $R_2$ exists and during this process we will never have to touch $P_w$ or $Q_u$.
    Hence, we can carry out the same procedure on $P_w$ and $Q_u$.
    Using this construction and \zcref{claim:noskipping}, we thus may assume that $P_u$, $P_w$, $Q_u$, and $Q_w$ are chosen such that $S' \coloneqq \{ u'',w'',v' \}$ is a separator of order 3 in $G$.

    Suppose that both $P_u^1$ and $P_w^1$ are non-trivial paths.
    Let $e_u$ be the edge incident to $u''$ that is found in $P_u^1$ and let $e_w$ be the edge incident to $w''$ that is found in $P_w^1$.
    Recall that $e = vv'$.
    We can now further deduce that $F \coloneqq \{ e_u,e_w,e \}$ is a non-trivial cut of order 3.
    
    Towards a contradiction, we will now show that $F$ must in fact be induced.
    If this holds, \zcref{lem:tightcutshape} tells us that $F$ is a tight cut and thus $P_u^1$ or $P_w^1$ must be a trivial path.

    Let $u^\star$ be the endpoint of $e_u$ distinct from $u''$ and let $w^\star$ be the endpoint of $e_w$ distinct from $w''$.
    As all edges incident to $u''$ and $w''$ are found in $E(H)$ and thus $u''w^\star, w''u^\star, vu'', vw'' \not\in E(G)$.
    The following possible edges remain: $u^\star w^\star$, $v'u^\star$, and $v'w^\star$.
    In particular, the existence of the last two edges implies $u^\star = u'$ and respectively, $w^\star = w'$.

    \begin{claim}\label{claim:nobridging}
        We have $u^\star w^\star \not\in E(G)$.
    \end{claim}
    \emph{Proof of \zcref{claim:nobridging}:}
    Suppose that $u^\star w^\star \in E(G)$.
    This immediately implies that $u''$ and $w''$ have different colours in $G$.
    We assume w.l.o.g.\ that $u'' \in A$ and $w'' \in B$.

    Since $G$ is bipartite and $F$ is a cut in $G$, either all perfect matchings of $G$ have an even number of edges in $F$ or they all have an odd number of edges in $F$.
    Let $U$ be the component of $G - F$ that contains $u''$, $w''$, and $v$.
    
    If $|F \cap M|$ is even for every perfect matching $M$, we first dismiss the possibility that $|F \cap M| = 0$.
    Should such a matching exist this would imply that $U$ has a perfect matching and thus as many black as it has white vertices.
    This is contradicted by the fact that, due to the 2-extendability of $G$ there exists a perfect matching $M'$ of $G$ which uses both $e_u$ and $e$, whose endpoints in $U$ are white.
    Thus $|F \cap M| = 2$ for all perfect matchings of $G$.
    But this again runs into parity issues as $M'$ from above tells us that $U$ has a surplus of 2 white vertices, whilst the 2-extendability of $G$ also tells us that $e_u,e_w$ are found together in a perfect matching and thus $U$ again must contain an equal number of white and black vertices.
    
    We can therefore move on to assuming that $|F \cap M|$ is odd for every perfect matchings of $G$.
    Choosing $M'$ as above, we confirm that there exists a perfect matching using the entirety of $F$.
    Thus we have $|V(U) \cap A| = |V(U) \cap B| + 1$.
    We can now count the number of edges in $U$ by counting all edges incident to the black, respectively the white vertices of $U$.
    The first option yields $|E(U)| = 3|V(U) \cap B| - 1$, as $w''$ loses one of its edges to $F$.
    However, as both $u''$ and $v$ lose an edge to $F$, the other option yields
    \[ |E(U)| = 3|V(U) \cap A| - 2  = 3(|V(U) \cap B| + 1) - 2 = 3|V(U) \cap B| + 3 - 2 = 3|V(U) \cap B| + 1, \]
    which contradicts our earlier calculation and proves our claim.
    \hfill$\blacksquare$

    This leaves us to show that $v'u^\star , v'w^\star \not\in E(G)$.
    We first assume that in fact both of these edges are found in $E(G)$, which implies that $\{ u^\star , w^\star \}$ form a 2-separator in $G$, contradicting its 3-connectivity.
    Thus we may assume that only one of these two, say $v'w^\star$, is found in $E(G)$.

    It remains true that $F$ is a non-trivial cut in $G$ and we let $U'$ be the component of $G - F$ that contains $w', v', u', u^\star$.
    Due to $G$ being cubic, there exists a $w'$-$V(P_u^1 - u'')$-path $P$ in $U'$ that is disjoint from $v'$.
    We further know that $P$ cannot have $u'$ as its second endpoint $z$, as otherwise $P \cup P_u \cup P_w \cup G[ \{ u,v,w \} ]$ is a cycle contradicting the cycle-conformality of $G$.
    
    Our goal is to now modify $P_u^1$ until we can position $P$ such that there does not exist a $V(u''P_uz)$-$V(zP_uu')$-path $R$ in $H - v'$ disjoint from $P$.
    If $R$ should exist, we let $a,b$ be the endpoints of $R$ such that $b \in V(zP_uu')$ and modify $P_u^1$ by replacing $aP_ub$ by $R$ and find a new path $P$ which has its endpoint closer to $u'$ than the previous $P$ by choosing $P \cup zP_ub$ (see \zcref{fig:HI}).
    This process must clearly terminate and as we perform all modifications within $U$, we retain that $F$ is a cut separating $P_u^1 - u''$ from the rest of $H - v'$.
    Thus we reach the desired property for $P$.
    However, the fact that $z \neq u'$ and $u'$ having a neighbour outside of $H$ implies that $\{ z,u' \}$ is a 2-separator in $G$, contradicting its 3-connectivity.

    \begin{figure}[ht]
    \centering
        \begin{tikzpicture}[scale=1.25]

            \pgfdeclarelayer{background}
		      \pgfdeclarelayer{foreground}
			
		      \pgfsetlayers{background,main,foreground}

            \begin{pgfonlayer}{background}
            \pgftext{\includegraphics[width=8cm]{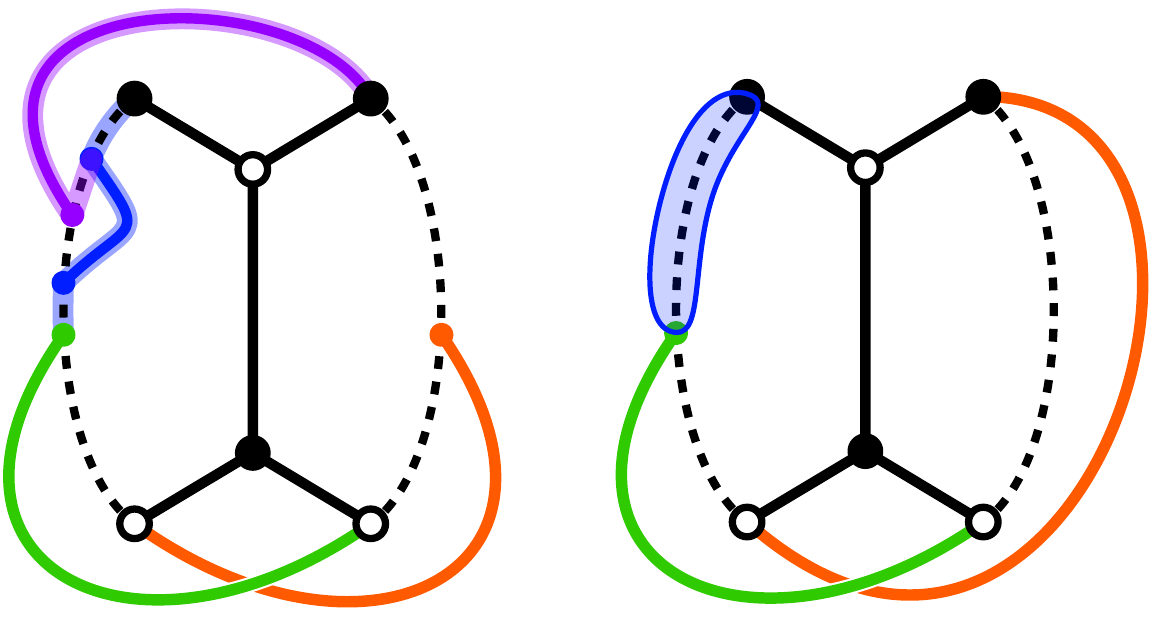}} at (C.center);
            \end{pgfonlayer}{background}
			
            \begin{pgfonlayer}{foreground}

            \node (Z) at (-3.7,0.6) [draw=none] {$z$};
            \node (A) at (-3.75,0.2) [draw=none] {$a$};
            \node (B) at (-3.525,1.1375) [draw=none] {$b$};
            \node (UPRIME) at (-3.025,1.75) [draw=none] {$u'$};
            \node (UPRIMEPRIME) at (-3.3,-0.2) [draw=none] {$u''$};
            \node (P) at (-1.525,1.9) [draw=none] {$P$};
            \node (R) at (-2.95,0.575) [draw=none] {$R$};

            \node (UPRIMEAGAIN) at (1.25,1.75) [draw=none] {$u'$};
            \node (UPRIMEPRIMEAGAIN) at (0.95,-0.2) [draw=none] {$u''$};
            
            \end{pgfonlayer}{foreground}
        
        \end{tikzpicture}
    \caption{Illustrations for the last steps in the proof of \zcref{lem:k33subdivision}. On the left the process of moving $P$ along $P^1_u$ is illustrated, with the purple highlighted path representing the updated path $P$ and the path highlighted in blue represented the new path $P^1_u$. On the right a 2-separator in $G$ is indicated in the case in which $P_w^1$ is trivial but $P_u^1$ is not.}
    \label{fig:HI}
    \end{figure}

    We may thus conclude that at least one path amongst $P_u^1$ and $P_w^1$ is trivial.
    Suppose w.l.o.g.\ that $P_u^1$ is not trivial.
    Then it is easy to observe that $u'$ and $u''$ form a 2-separator in $G$ (see \zcref{fig:HI}).
    Hence in fact both $P_u^1$ and $P_w^1$ are trivial.
    This allows us to drop the indices and see $H$ as being comprised of $v$, $v'$, $P_u$, $P_w$, $Q_u$, and $Q_w$.
    In particular, $H$ is therefore a subdivision of $K_{3,3}$, as desired.
\end{proof}

We now use this lemma as a starting point to finish the proof of the main theorem in this section.

\begin{proof}[Proof of \zcref{thm:k33issolo}]
    Suppose that $G$ is not isomorphic to $K_{3,3}$.
    Then, according to \zcref{lem:k33subdivision}, $G$ contains a $K_{3,3}$-subdivision $H$ containing five edges $e = vv', uv, vw, u'v', v'w'$ such that $u,w,u',w'$ each have degree 3 in $H$.
    This in particular implies that there exists a unique $u$-$u'$-path $P_u$, a unique $w$-$w'$-path, a unique $u$-$w'$-path $Q_w$, and a unique $w$-$u'$-path in $H' \coloneqq H - \{ v,v' \}$, such that all of these paths are internally disjoint and we have $\{ v,v' \} \cup V(P_u \cup P_w \cup Q_u \cup Q_w) = V(H)$.
    
    Of course, if each of these paths has length 1, then $G$ is isomorphic to $K_{3,3}$.
    Thus there must exist a vertex on at least one of our paths that is not from $\{ u,u',w,w' \}$.
    We first discuss that this vertex cannot lead us to any other part of our structure.
    A claim very similar to \zcref{claim:noskipping} does most of the work.

    \begin{claim}\label{claim:noskipping2}
        The following do not exist:
        \begin{enumerate}
            \item A $V(P_u)$-$V(P_w)$-path in $G'$ that is internally disjoint from $H$.

            \item A $V(Q_u)$-$V(Q_w)$-path in $G'$ that is internally disjoint from $H$.
        \end{enumerate}
    \end{claim}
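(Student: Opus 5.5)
The plan is to argue exactly as in \zcref{claim:noskipping}. Each forbidden path will be used to build an even cycle $C$ that contains all three neighbours $u$, $w$, $v'$ of $v$ but not $v$ itself. Then $v$ is isolated in $G - V(C)$, so $G - V(C)$ has no perfect matching. Since $G$ is bipartite, every cycle is even, so this contradicts the cycle-conformality of $G$. Here $G' = G - \{ v,v' \}$, as before. In $H$, the vertex $v$ is adjacent to $u,w,v'$ and $v'$ is adjacent to $v,u',w'$. The branch paths are $P_u$ (from $u$ to $u'$), $P_w$ (from $w$ to $w'$), $Q_w$ (from $u$ to $w'$) and $Q_u$ (from $w$ to $u'$).

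First I will pin down where the endpoints of such a path $R$ can lie. Each of $u,w,u',w'$ already has degree $3$ in $H$, and $G$ is cubic. Hence no path that is internally disjoint from $H$ and avoids $v,v'$ can end in one of them. Therefore in case (i) the endpoints are internal vertices $x$ of $P_u$ and $y$ of $P_w$. In case (ii) they are internal vertices $x$ of $Q_u$ and $y$ of $Q_w$. In particular, a branch path of length $1$ can never serve as an end, so these cases cover all possibilities.

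For (i), given such an $R$ from $x \in V(P_u)$ to $y \in V(P_w)$, I take
\[ C \coloneqq uQ_ww' \cup w'v'u' \cup u'Q_uw \cup wP_wy \cup R \cup xP_uu . \]
The segments used are pairwise internally disjoint: $wP_wy$ avoids $w'$ because $y \neq w'$, and $xP_uu$ avoids $u'$ because $x \neq u'$. Moreover, $R$ meets $H$ only in $x$ and $y$. So $C$ is a cycle through $u$, $w$ and $v'$ that avoids $v$.

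For (ii), given $R$ from $x \in V(Q_u)$ to $y \in V(Q_w)$, I take
\[ C \coloneqq uP_uu' \cup u'v'w' \cup w'P_ww \cup wQ_ux \cup R \cup yQ_wu . \]
This is again a cycle through $u$, $w$ and $v'$ that misses $v$, for the same reasons.

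In both cases $v$ becomes isolated in $G - V(C)$, which gives the desired contradiction. The only step needing care is the endpoint argument. It guarantees that the sub-paths used do not overlap at a branch vertex, and it relies solely on $G$ being cubic.
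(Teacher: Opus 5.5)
Your proof is correct and follows the same route as the paper. The paper only asserts that each forbidden path yields an even cycle through all three neighbours $u$, $w$, $v'$ of $v$ that avoids $v$, and shows this in a figure; your explicit cycles, together with the cubic degree count that forces the ends of the path into the interiors of the branch paths, make that precise.

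One point should be stated explicitly: your degree count, like the claim itself, must be read for paths using no edge of $H$. If the $u$-$w'$ branch path is the single edge $uw'$, that edge is literally a $V(P_u)$-$V(P_w)$-path in $G'$ internally disjoint from $H$. Under this intended reading your argument is complete.
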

    \emph{Proof of \zcref{claim:noskipping2}:}
    This claim is easy to confirm by hand by finding non-conformal cycles (see \zcref{fig:JK} for an illustration).
    \hfill$\blacksquare$

    \begin{figure}[ht]
    \centering
        \begin{tikzpicture}[scale=1.25]

            \pgfdeclarelayer{background}
		      \pgfdeclarelayer{foreground}
			
		      \pgfsetlayers{background,main,foreground}

            \begin{pgfonlayer}{background}
            \pgftext{\includegraphics[width=8cm]{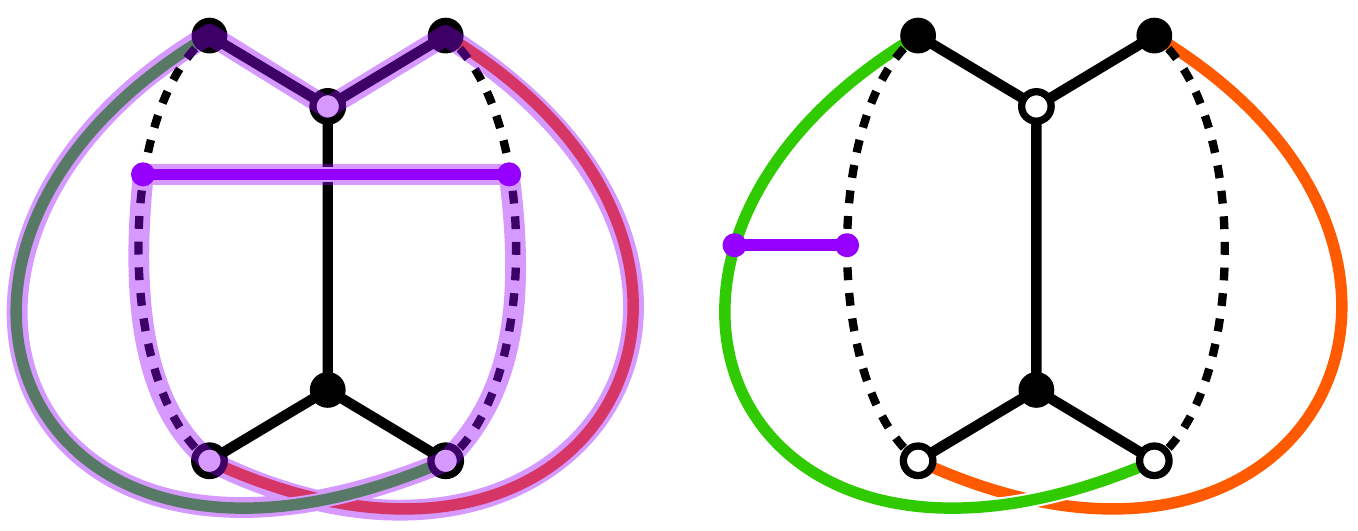}} at (C.center);
            \end{pgfonlayer}{background}
			
            \begin{pgfonlayer}{foreground}
            
            \node (V) at (-2.075,-0.975) [draw=none] {$v$};

            \node (J) at (0.65,-0.075) [draw=none] {$J$};
            \node (A) at (0.15,0.1) [draw=none] {$a$};
            \node (B) at (1.15,0.1) [draw=none] {$b$};
            
            \end{pgfonlayer}{foreground}
        
        \end{tikzpicture}
    \caption{The illustration to the left shows why \zcref{claim:noskipping2} holds.
    Here a $V(P_u)$-$V(P_w)$-path is indicated in purple and a cycle isolating the vertex $v$ is highlighted in purple, which confirms that in this case $G$ cannot be cycle-conformal.
    The other case of \zcref{claim:noskipping2} is symmetric.
    On the right we illustrate where the path $J$ lies.}
    \label{fig:JK}
    \end{figure}

    At this point the endpoints of $Q_u$ are $u$ and $w'$, and the endpoints of $Q_w$ are $w$ and $u'$.
    This allows us to relabel $H$ such that up to symmetry there is now only one remaining case to consider.
    We therefore assume w.l.o.g.\ that there exists a non-trivial $V(Q_w)$-$V(P_u)$-path $J$ in $G$ that is internally disjoint from $H$.
    In particular, we know that $J$ is disjoint from $u$, $u'$, and $w$.
    Let $a \in V(P_u)$ and $b \in V(Q_w)$ be the two endpoints of $J$ (see \zcref{fig:JK}).

    Further, let $K'$ be the component of $G - V(H)$ that contains the internal vertices of $J$.
    This may mean that $K'$ is empty.
    Since $K'$ has neighbours in $V(P_u - \{ u,u' \})$ and $V(Q_w - \{ u',w \})$ it cannot have any neighbours in $V(P_w)$ or $V(Q_u)$ according to \zcref{claim:noskipping2}.
    This substantiates part of the following claim. 
    
    \begin{claim}\label{claim:notriangleescape}
        There do not exist any $V(J \cup aP_uu' \cup u'Q_wb)$-$V(P_w \cup Q_u)$-paths internally disjoint from $H$.
    \end{claim}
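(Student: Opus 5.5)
To prove \zcref{claim:notriangleescape}, I would argue by contradiction: assume some path $R$ with one endpoint $x \in V(J \cup aP_uu' \cup u'Q_wb)$ and the other endpoint $y \in V(P_w \cup Q_u)$, internally disjoint from $H$. As in \zcref{claim:noskipping} and \zcref{claim:noskipping2}, the aim is to build an even cycle $C$ whose removal isolates $v$ (or $v'$). Any cycle through $u$, $w$ and $v'$ that avoids $v$ will do, since $v$ then has all three neighbours $u,w,v'$ on $C$.

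The first step is to dispose of the case where $x$ is an internal vertex of $J$. Then $R$ together with the relevant part of $J$ forms a path from $V(K')$ into $V(P_w \cup Q_u)$ whose interior avoids $H$. This forces $K'$ to have a neighbour in $V(P_w)$ or $V(Q_u)$. Since $K'$ already has neighbours in $P_u$ and $Q_w$, this contradicts \zcref{claim:noskipping2}, as noted just before the claim. So we may assume $x \in V(aP_uu' \cup u'Q_wb)$.

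The second step replaces the triangle-like structure. By symmetry between the roles of $P_u$ and $Q_w$ (both end at $u'$), assume $x \in V(aP_uu')$, with $x \neq u'$; the case $x = u'$ is impossible because $G$ is cubic and $u'$ already has degree 3 in $H$. The path $J$ gives a detour: $u P_u a$ followed by $J$ reaches $Q_w$ at $b$. Then $bQ_wu'$ joins this to $u'$, bypassing the segment $aP_uu'$ and so freeing the vertex $x$. Now split by where $y$ lies.

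\begin{itemize}
\item If $y \in V(P_w)$, take
\[ C = uP_uaJbQ_wu' \, v' \, w'P_wyRxP_u a' \ldots \]
More carefully, the plan is to route a cycle through $u, w, v'$ using $R$ from $x$ to $y$ on $P_w$, then $yP_ww$. The cycle must traverse $u \to a \to (J) \to b \to u' \to v' \to w' \to (P_w) \to y \to (R) \to x$. Closing it at $u$ fails in general, so instead close via $x P_u$ toward $u$. This conflicts with $a$ unless $x$ lies between $a$ and $u'$, which holds. In that case close via $x P_u u'$, with the alternative $u \ldots$ through $Q_u$ from $u$ to $w'$ and then $P_w$ to $w$.
\item If $y \in V(Q_u)$, proceed analogously with $Q_u$ in place of $P_w$.
\end{itemize}

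In each case one must check that the resulting cycle contains $u, w, v'$ and avoids $v$. It is automatically even because $G$ is bipartite.

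The main obstacle is exactly this case analysis: choosing, for each position of $x$ (on $aP_uu'$ versus $u'Q_wb$, and relative to $a$ or $b$) and $y$ (on $P_w$ versus $Q_u$, which end at $w'$, $w$ and $u$), a cycle through $u, w, v'$ that uses $J$ and $R$ consistently. I would try first the cycle
\[ u\,P_u\,a\,J\,b\,Q_w\,u'\,v'\,w'\,Q_u\,u \]
extended by $R$ to pick up $w$, and adjust it through a small case table. If some configuration yields no such cycle, fall back on a 2- or 3-separator argument contradicting 2-extendability, in the spirit of \zcref{lem:k33subdivision}.
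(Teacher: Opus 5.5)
You have the right target (an even cycle through $u$, $w$, $v'$ that avoids $v$), and your treatment of an endpoint $x$ inside $J$, via $K'$ and \zcref{claim:noskipping2}, is correct and matches the paper. The gap is that the rest of the claim consists precisely of exhibiting such cycles, and the proposal never produces one.

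\begin{itemize}
\item The walk you start for $y \in V(P_w)$, namely $uP_ua\,J\,bQ_wu'\,v'\,w'P_wy\,R\,x$, does not close. From $x$, every route back along $P_u$ runs into $a$ or $u'$, both already used, and $w$ is never visited.
\item Your fallback candidate $uP_uaJbQ_wu'v'w'Q_uu$ also misses $w$.
\item The underlying mistake is leaving $Q_w$ at $b$ towards $u'$. That uses up $u'$, which is the only way back down to $x$, and it prevents the cycle from reaching $w$.
\item The case $y \in V(Q_u)$ is waved through as ``analogous'', and the separator fallback is unsupported.
\end{itemize}
As written, the proposal does not prove the claim.

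The missing steps are short. Under your normalisation, $x$ is an interior vertex of $aP_uu'$; note $x \neq a,u'$, since both already have degree $3$ in $H \cup J$.

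If $y \in V(P_w)$, there is nothing new to do. Then $R$ is a $V(P_u)$-$V(P_w)$-path internally disjoint from $H$, which part i) of \zcref{claim:noskipping2} already excludes. Symmetrically, $x \in V(u'Q_wb)$ with $y \in V(Q_u)$ is excluded by part ii). So you spent your effort on the one case that is free.

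The only genuine case is $y$ an interior vertex of $Q_u$. There, take
\[ uP_ua \,\cup\, J \,\cup\, bQ_ww \,\cup\, P_w \,\cup\, w'v'u' \,\cup\, u'P_ux \,\cup\, R \,\cup\, yQ_uu . \]
That is, follow $Q_w$ from $b$ towards $w$, run along $P_w$ to $w'$, pass through $v'$ to $u'$, and then descend $P_u$ from $u'$ to $x$. This is a cycle for three reasons:
\begin{itemize}
\item $a$ lies strictly between $u$ and $x$ on $P_u$, so $uP_ua$ and $u'P_ux$ are disjoint;
\item $b \neq u'$, so $bQ_ww$ avoids $u'$;
\item $y \neq w'$, so $yQ_uu$ avoids $w'$.
\end{itemize}
The cycle contains $u$, $w$ and $v'$ but not $v$, so $v$ is isolated after deleting it, and the cycle is even because $G$ is bipartite.

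The mirrored case, $x$ interior to $u'Q_wb$ and $y$ interior to $P_w$, uses $wQ_wb \cup J \cup aP_uu \cup Q_u \cup w'v'u' \cup u'Q_wx \cup R \cup yP_ww$. These are the two cycles the paper's figure for this claim depicts.
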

    \emph{Proof of \zcref{claim:notriangleescape}:}
    This claim is easy to confirm by hand by finding non-conformal cycles  (see \zcref{fig:L} for an illustration).
    \hfill$\blacksquare$

    \begin{figure}[ht]
    \centering
        \begin{tikzpicture}[scale=1.25]

            \pgfdeclarelayer{background}
		      \pgfdeclarelayer{foreground}
			
		      \pgfsetlayers{background,main,foreground}

            \begin{pgfonlayer}{background}
            \pgftext{\includegraphics[width=8cm]{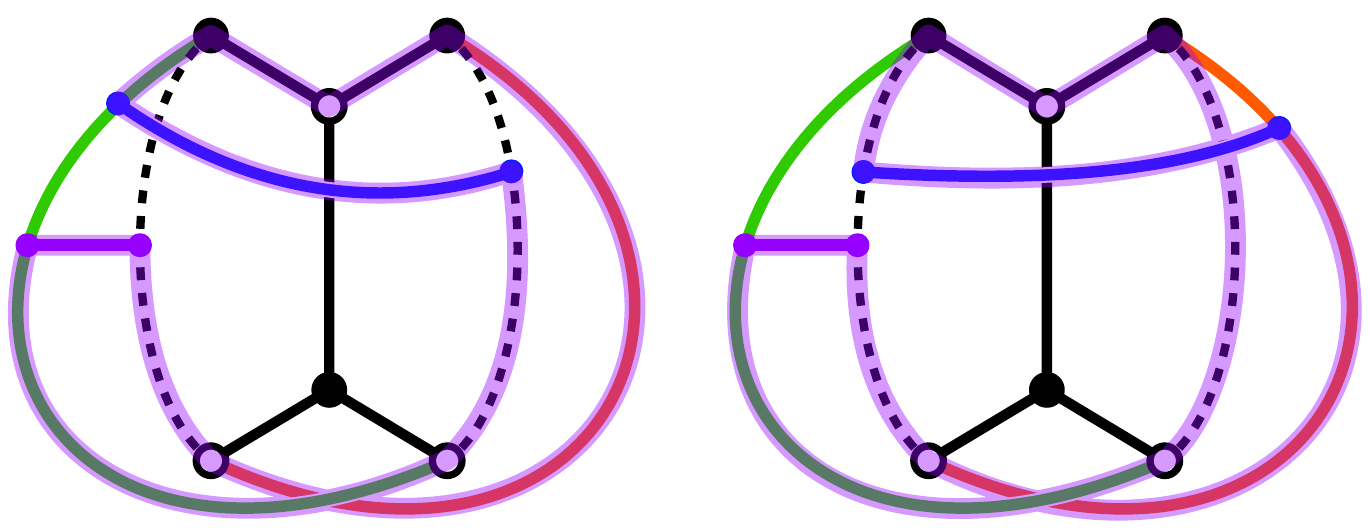}} at (C.center);
            \end{pgfonlayer}{background}
			
            \begin{pgfonlayer}{foreground}

            \node (V) at (-2.1,-1) [draw=none] {$v$};
            \node (VAGAIN) at (2.1,-1) [draw=none] {$v$};
            
            \end{pgfonlayer}{foreground}
        
        \end{tikzpicture}
    \caption{An illustration for \zcref{claim:notriangleescape}. The blue paths represent the type of path the claim forbids from existing. In both figures the cycle highlighted in purple isolates $v$ and thus contradicts the cycle-conformality of $G$.}
    \label{fig:L}
    \end{figure}
    
    This claim together with \zcref{claim:noskipping2} tells us that there are two types of paths over which we do not yet have control.
    First, it is possible to have a $V(aP_uu)$-$V(Q_u)$-path that is internally disjoint from $H$, with $u$ itself being such a path.
    Secondly, by symmetry, a $V(bQ_ww)$-$V(P_w)$-path internally disjoint from $H$ may exist, with $w$ being such a path.
    Both of these types of paths must be disjoint from $V(K')$.

    Let $a'$ be the vertex in $V(aP_uu)$ that minimises the length of $aP_ua'$ such that there exists an $a'$-$V(Q_u)$-path in $G$ that is internally disjoint from $H$.
    We let $b'$ be defined analogously for $bQ_ww$.
    Note that $a \neq a'$ and $b \neq b'$, as $G$ is cubic.
    Finally, let $f_a$ be the unique edge in $aP_ua'$ that is incident to $a'$ and let $f_b$ be defined analogously for $bQ_wb'$.

    We claim that $F' = \{ uv', f_a, f_b \}$ is a tight cut.
    By construction it is easy to see that $F'$ is indeed a cut, since according to \zcref{claim:notriangleescape}, we cannot escape $V(J \cup aP_uu' \cup u'Q_wb)$ and by choice of $a'$ and $b'$, it is also not possible to escape $V(aP_ua')$ and $V(bQ_wb')$ into another part of $H$.
    
    It remains to confirm that $F'$ is an induced matching of order 3, allowing us to apply \zcref{lem:tightcutshape} to conclude our proof.
    That $F'$ is a matching is easy to see.
    Both $f_a$ and $f_b$ cannot have endpoints incident to endpoints of $uv'$ due to the existence of $J$ and the general structure of $H$.
    Two of the edges incident to $a'$ are contained in $E(P_u)$ and the third is found in an $a'$-$V(Q_u)$-path and thus $a'$ is safe as well.
    An analogous argument frees up $b'$.
    If the remaining endpoints $a^\star$ and $b^\star$ from $aP_ua'$ and $bQ_wb'$ respectively are adjacent, then the path $a^\star P_uu \cup uQ_wb^\star$ can be closed via the edge $a^\star b^\star$.
    Since $G$ is bipartite, this means that $a^\star$ and $b^\star$ have distinct colours.
    To conclude that this is impossible, we may simply repeat the arguments presented in the proof of \zcref{claim:nobridging}.
    This completes our proof.
\end{proof}

Combining \zcref{lem:tightcutpreserve}, \zcref{thm:cubictightcuts}, and \zcref{thm:k33issolo} immediately yields \zcref{thm:cubiccharacterisation}.

\section{Conclusion}\label{sec:generation}
In this article we showed that $C_4$ is the only Pfaffian, cycle-conformal brace and $K_{3,3}$ is the only cubic, cycle-conformal brace.
Both of these results were then extended to characterise the matching covered, Pfaffian, bipartite, cycle-conformal graphs and respectively, characterise the matching covered, cubic, bipartite, cycle-conformal graphs.
We close with a discussion of whether some select fundamental bricks and braces are cycle-conformal and a direction for future research.

\paragraph{Interesting classes of bricks and braces.}
There are a few prominent classes of bricks and braces in matching theory for which it is natural to ask whether they are cycle-conformal.
In~\cite{DalwadiPDK2025Planar}, the planar representatives among the so-called Norine-Thomas bricks (named after their appearance in~\cite{NorineT2007Generating}) are discussed.
Let $C$ be an odd cycle and let $G$ be the result of adding a single vertex adjacent to all vertices in $V(C)$ to $C$.
We call $G$ an \emph{odd wheel}.
Let $k \in \mathbb{N}$ be positive, then the \emph{ladder $L_k$ with $k$ rungs} is defined as $(\{ u_i,v_i \mid i \in [k] \}, \{ u_iu_{i+1}, v_iv_{i+1}, u_iv_i \mid i \in [k-1] \} \cup \{ u_kv_k \})$.
An \emph{odd prism} is constructed from a ladder with $k \geq 3$ rungs, with $k$ being odd, by adding the edges $u_1u_k, v_1v_k$.
The odd wheels and the odd prisms represent all planar, cycle-conformal Norine-Thomas bricks~\cite{DalwadiPDK2025Planar}.

The importance of the Norine-Thomas bricks derives from the fact that they represent a fundamental class of bricks from which all bricks can be generated via fairly simple operations.
A similar approach was important to McCuaig's resolution of \Polya's Permanent Problem~\cite{McCuaig2001Brace,McCuaig2004Polyas}.
For this reason, we now briefly address the non-planar Norine-Thomas bricks.

\begin{figure}[ht]
     \centering
            \begin{tikzpicture}
			
			\node (V0) at (0:0) [draw=none] {};
			
			\foreach\i in {1,...,5}
			{
				\node (V\i) at ($(V0)+({(360/5 * \i)+360/4}:1)$) [draw, circle, scale=0.6, fill, label={}] {};
			}
			
			\foreach\i in {6,...,10}
			{
				\node (V\i) at ($(V0)+({(360/5 * \i)+360/4}:2)$) [draw, circle, scale=0.6, fill, label={}] {};
			}
			
			\foreach\i in {6,7,8,9}
			{
				\pgfmathtruncatemacro\iplus{\i+1}
				\path (V\i) edge[very thick] (V\iplus);
			}
			
			\path
			(V1) edge[very thick] (V6)
                (V2) edge[very thick] (V7)
                (V3) edge[very thick] (V8)
                (V4) edge[very thick] (V9)
                (V5) edge[very thick] (V10)
                (V1) edge[very thick] (V3)
                (V1) edge[very thick] (V4)
                (V2) edge[very thick] (V4)
                (V2) edge[very thick] (V5)
                (V3) edge[very thick] (V5)
                (V6) edge[very thick] (V10)
			;
			
			\end{tikzpicture}
     \qquad
            \begin{tikzpicture}
			
			\node (V0) at (0:0) [draw=none] {};
			
			\foreach\i in {1,...,5}
			{
				\node (V\i) at ($(V0)+({(360/5 * \i)+360/4}:1)$) [draw, circle, scale=0.6, fill, label={}] {};
			}
			
			\foreach\i in {6,...,10}
			{
				\node (V\i) at ($(V0)+({(360/5 * \i)+360/4}:2)$) [draw, circle, scale=0.6, fill, label={}] {};
			}
			
			\foreach\i in {6,7,8,9}
			{
				\pgfmathtruncatemacro\iplus{\i+1}
				\path (V\i) edge[very thick] (V\iplus);
			}
			
			\path
			(V1) edge[very thick] (V6)
                (V2) edge[very thick] (V7)
                (V3) edge[very thick,red] (V8)
                (V4) edge[very thick,red] (V9)
                (V5) edge[very thick] (V10)
                (V1) edge[very thick] (V3)
                (V1) edge[very thick] (V4)
                (V2) edge[very thick] (V4)
                (V2) edge[very thick] (V5)
                (V3) edge[very thick] (V5)
                (V6) edge[very thick] (V10)
                (V5) edge[very thick,red] (V2)
                (V2) edge[very thick,red] (V4)
                (V9) edge[very thick,red] (V8)
                (V3) edge[very thick,red] (V5)
			;
			
			\end{tikzpicture}
     \caption{Two drawings of the Petersen graph. On the right a non-conformal, even cycle in the Petersen graph is marked in red.}
     \label{fig:petersen}
\end{figure}
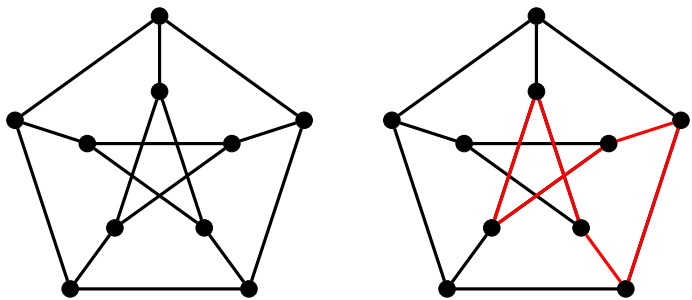

Let $k \in \mathbb{N}$ with $k \geq 3$, then the graph $M_k$ which is derived from $L_k$ by adding the edges $u_1v_k$ and $v_1u_k$ is called a \emph{Möbius ladder with $k$ rungs}.
We call a Möbius ladder $M_k$ \emph{even} if $k$ is even and \emph{odd} if $k$ is odd.
The non-planar Norine-Thomas bricks then consist of even Möbius ladders and the Petersen graph~\cite{NorineT2007Generating}.
While for the latter it can easily be observed that it is not cycle-conformal (see \zcref{fig:petersen}), the Möbius ladders are in fact cycle-conformal.

\begin{lemma}\label{lem:moebiscycconf}
    Even Möbius ladders are cycle-conformal.
\end{lemma}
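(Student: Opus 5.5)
The plan is to classify all cycles of an even Möbius ladder $M_k$ by how many rungs $u_iv_i$ they use, show that the even ones are of only two kinds, and exhibit a perfect matching of the complement for each kind.

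\textbf{Setup.} Relabel the rim of $M_k$ as the $2k$-cycle $R = u_1u_2\cdots u_kv_1v_2\cdots v_ku_1$. Here the edges $u_kv_1$ and $v_ku_1$ are the two "twisted" edges $v_1u_k$ and $u_1v_k$. The remaining edges are the $k$ rungs $r_i = u_iv_i$, and each rung joins two antipodal vertices of $R$. We view the vertex set as partitioned into the $k$ rung pairs $\{u_i,v_i\}$, arranged cyclically in $i \in \mathbb{Z}_k$. The rim edges between consecutive positions $i,i+1$ form two "rails", and these rails are swapped when passing from position $k$ back to position $1$ (the twist).

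\textbf{Step 1: classify cycles by the rungs they use.} Let $C$ be a cycle other than $R$ itself, and let $i_1 < \cdots < i_m$ be the positions of the rungs in $E(C)$. If $m=0$, then $C \subseteq R$, so $C=R$. In that case $G - V(C)$ is empty and trivially has a perfect matching.

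For $m \geq 1$, consider two cyclically consecutive rung positions $i_t, i_{t+1}$ of $C$, and the stretch of positions strictly between them. Since every vertex has degree at most 2 in $C$, and the only edges leaving this stretch are rail edges, $C$ traverses the stretch along exactly one rail or along both rails. If it uses both rails on some stretch, then $C$ closes up using only the two rungs at the ends of that stretch. We call such a $C$ a \emph{band}: it consists of two rungs $r_i,r_j$ together with both rails between them on one side (possibly passing through the twist), so $m=2$.

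Otherwise $C$ uses exactly one rail on each of its $m$ stretches, switching rails at every rung it uses. Following such a zigzag once around the ladder, the rail changes $m$ times at rungs and once more at the twist, and $C$ must return to its starting rail. Hence $m+1$ is even, that is, $m$ is odd. Its length is then
\[ |E(C)| = \sum_t (i_{t+1}-i_t) + m = k + m, \]
which is odd because $k$ is even. Therefore every even cycle of $M_k$ is either $R$ or a band.

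\textbf{Step 2: match the complement of a band.} A band $C$ with rungs $r_i,r_j$ contains both vertices of every rung pair at positions in the interval from $i$ to $j$ on its side, and no vertex of any other pair. So $G - V(C)$ consists of complete rung pairs $\{u_l,v_l\}$. The rungs $u_lv_l$ for those $l$ form a perfect matching of $G - V(C)$. Together with the rim case, this shows every even cycle is conformal. We do not need $M_k$ to be matching covered separately, since the definition of cycle-conformal only requires a perfect matching, and the rungs provide one.

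\textbf{Main obstacle.} The delicate step is the rigour of Step 1. First, the claim that a cycle restricted to a rung-free stretch runs along one or both rails needs justification. Second, the parity count must handle the twist correctly, in particular a band that passes through the twist and so wraps around the "back" of the ladder. I would handle this by cutting $M_k$ at the twist into the planar ladder $L_k$ plus the two twisted edges, and then doing the parity bookkeeping there: a cycle using an odd number of twisted edges must switch rails an odd number of times at rungs. This gives the statement "a zigzag uses an odd number of rungs" cleanly.
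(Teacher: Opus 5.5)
Your proof is correct and ends the same way as the paper's: the complement of every even cycle is a union of rung pairs, matched by the rungs. The parity step is done differently, though.

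The paper uses a colouring argument. $L_k$ is bipartite, and for even $k$ both added edges $u_1v_k$ and $v_1u_k$ join vertices of the same colour class, so an even cycle uses none or both of them. Cycles using neither lie in $L_k$ and are conformal because ladders are. A cycle using both is either spanning or leaves a sub-ladder.

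You instead classify every cycle of $M_k$ by its rung set into rim, bands and zigzags. You get the parity of zigzags from the rail-switch count ($m$ odd) and the length $k+m$.

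The paper's route is shorter, because the $2$-colouring handles the parity bookkeeping and the shape of cycles through the twist is left as ``easy to observe''. Yours is self-contained, makes that structural claim explicit, and describes all cycles, odd ones included.

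One correction to Step 1. A rung-free stretch can also be traversed by no rail at all; this is exactly what happens on the far side of a band. So ``exactly one rail or both rails'' is false as stated, and your ``otherwise'' branch needs a line of justification. It follows from the degree condition at rung endpoints. Each endpoint of a rung in $C$ has exactly one rail edge in $C$. So if $s_t\in\{0,1,2\}$ is the number of rails of $C$ in the $t$-th stretch, then $s_{t-1}+s_t=2$ at every used rung (indices cyclic, giving $2s_1=2$ when $m=1$). Hence either some stretch carries two rails, which gives a band, or every stretch carries exactly one.

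In the second case the same observation shows that $C$ never reverses direction: at each used rung one endpoint continues backward and the other forward. It closes after exactly one turn, since each pair of consecutive positions is joined by only one rail edge of $C$. This justifies your count $m+1\equiv 0 \pmod 2$.
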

\begin{proof}
    It is easy to observe that ladders themselves are cycle-conformal.
    To confirm that the same holds true for even Möbius ladders one therefore only needs to consider even cycles which use at least one of the two newly added edges.
    Next, we observe that ladders are bipartite and each of the two added edges in an even Möbius ladder connects two vertices in the same colour class.
    Hence a cycle using the new edges can be even if and only if it uses both of these edges.
    It is now easy to observe that unless such a cycle spans the entire graph (in which case it is conformal), its deletion leaves us with a ladder, whose rungs form a perfect matching.
\end{proof}

In his paper on the generation of braces~\cite{McCuaig2001Brace}, McCuaig presents three fundamental classes of braces, two of which are planar, with each member being non-isomorphic to $C_4$ and thus being non-cycle-conformal.
The remaining class are the odd Möbius ladders.
Notably $M_3$ is isomorphic to $K_{3,3}$ and thus cycle-conformal.
However, all other odd Möbius ladders are not cycle-conformal (see \zcref{fig:moebius} or use the fact that they are cubic combined with \zcref{thm:k33issolo}).
Thus there is exactly one graph among the basic braces in~\cite{McCuaig2001Brace} that is cycle-conformal.

\begin{figure}[ht]
        \centering
            \begin{tikzpicture}

            \clip (-0.5,-0.25) rectangle (6.5,2.6);

            \node (A0) at (0,0) [draw, circle, scale=0.6] {};
            \node (A1) at (1.5,1.5) [draw, circle, scale=0.6] {};
            \node (A2) at (3,0) [draw, circle, scale=0.6] {};
            \node (A3) at (4.5,1.5) [draw, circle, scale=0.6] {};
            \node (A4) at (6,0) [draw, circle, scale=0.6] {};
            \node (B0) at (0,1.5) [draw, circle, scale=0.6, fill] {};
            \node (B1) at (1.5,0) [draw, circle, scale=0.6, fill] {};
            \node (B2) at (3,1.5) [draw, circle, scale=0.6, fill] {};
            \node (B3) at (4.5,0) [draw, circle, scale=0.6, fill] {};
            \node (B4) at (6,1.5) [draw, circle, scale=0.6, fill] {};

            \path
                (A0) edge[very thick,red] (B0)
                (A0) edge[very thick,red] (B1)
                (A1) edge[very thick] (B0)
                (A1) edge[very thick,red] (B1)
                (A1) edge[very thick,red] (B2)
                (A2) edge[very thick] (B1)
                (A2) edge[very thick] (B2)
                (A2) edge[very thick] (B3)
                (A3) edge[very thick,red] (B2)
                (A3) edge[very thick,red] (B3)
                (A3) edge[very thick] (B4)
                (A4) edge[very thick,red] (B3)
                (A4) edge[very thick] (B4)
            ;

            \node (Con2) at (-1.5,4) [draw=none, label={}] {};
            \node (Con3) at (-0.5,0.25) [draw=none, label={}] {};

            \draw[very thick] (B4) .. controls (Con2) and (Con3) .. (A0);

            \node (Con2) at (6.5,0.25) [draw=none, label={}] {};
            \node (Con3) at (7.5,4) [draw=none, label={}] {};

            \draw[very thick,red] (A4) .. controls (Con2) and (Con3) .. (B0);

		\end{tikzpicture}
     \caption{A drawing of the Möbius ladder $M_5$ with a non-conformal, even cycle indicated in red. The construction of this even cycle can clearly be extended to any odd Möbius ladder $M_k$ with $k \geq 3$, rendering all of these graphs non-cycle-conformal.}
     \label{fig:moebius}
\end{figure}
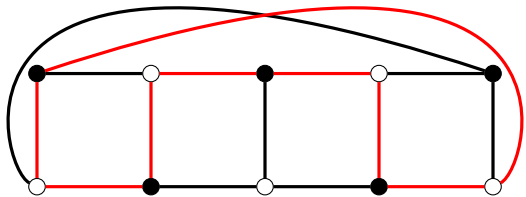

\paragraph{Odd-cycle-conformal graphs.}
Recall that in the introduction we made the decision to focus on graphs in which all even cycles are conformal to define cycle-conformal graphs.
As laid out throughout this paper, there are many good reasons to make this choice.
There is however a sensible class for which it is interesting to consider the graphs in which all odd cycles are conformal, which we call \emph{odd-cycle-conformal}.

A graph $G$ is called factor-critical if for all vertices $v \in V(G)$ the graph $G - v$ has a perfect matching.
Such graphs must clearly have an odd number of vertices and thus, for any odd cycle $C \subseteq G$ the set $V(G) \setminus V(C)$ has an even number of vertices.
Furthermore, it is easy to see that a factor-critical graph $G$ cannot be a forest, meaning that $G$ must contain a cycle, and $G$ cannot be bipartite.
These observations, combined with their fundamental role in matching theory (see~\cite{LovaszP1986Matching} for reference), indicate that factor-critical graphs are the appropriate target for the study of odd-cycle-conformal graphs.

As for examples of odd-cycle-conformal graphs, it is easy to observe that for all integers $t$ the cycle $C_{2t+1}$ is odd-cycle-conformal and the same is true for $K_{2t+1}$.
An interesting, non-trivial planar example of a class of odd-cycle-conformal graphs is given by the even wheels, which are defined analogously to odd wheels.
Note that all of these examples are factor-critical.

In general, over the course of history in matching theory, factor-critical graphs have proven to be easier to deal with than matching covered graphs.
Thus, in contrast to the apparent complexity of the task for cycle-conformal, matching covered graphs, a full characterisation of the factor-critical, odd-cycle-conformal graphs may not be out of reach.

\medskip

\textbf{Acknowledgements:} Parts of this article, mainly \zcref{sec:braces}, are based on the Bachelor thesis of the second author~\cite{Kuske2024Characterization}, which was supervised by the first author.

We also want to thank Sebastian Wiederrecht for suggesting to us that we study cycle-conformal graphs, a topic which he first developed together with Nishad Kothari whilst on a visit to the University of Waterloo in 2017.

\bibliographystyle{alphaurl}
\bibliography{literature}

@article{CheC2008Forcing,
  title = {Forcing Faces in Plane Bipartite Graphs},
  author = {Che, Zhongyuan and Chen, Zhibo},
  year = 2008,
  month = jun,
  journal = {Discrete Mathematics},
  volume = {308},
  number = {12},
  pages = {2427--2439},
  issn = {0012365X},
  doi = {10.1016/j.disc.2007.05.025},
  urldate = {2023-05-01},
  langid = {english}
}

@article{CheC2013Forcing,
  title = {Forcing Faces in Plane Bipartite Graphs ({{II}})},
  author = {Che, Zhongyuan and Chen, Zhibo},
  year = 2013,
  month = jan,
  journal = {Discrete Applied Mathematics},
  volume = {161},
  number = {1-2},
  pages = {71--80},
  issn = {0166218X},
  doi = {10.1016/j.dam.2012.08.016},
  urldate = {2023-05-01},
  langid = {english}
}

@article{DalwadiPDK2025Planar,
  title = {Planar Cycle-Extendable Graphs},
  author = {Dalwadi, Aditya Y and Pause, Kapil R Shenvi and Diwan, Ajit A and Kothari, Nishad},
  year = 2025,
  month = may,
  journal = {Discrete Mathematics \& Theoretical Computer Science},
  volume = {vol. 27:2},
  number = {Graph Theory},
  pages = {13929},
  issn = {1365-8050},
  doi = {10.46298/dmtcs.13929},
  urldate = {2025-06-02},
  langid = {english}
}

@article{DeCarvalhoC2005An,
  title = {An $\mathcal{O}{ ( VE )}$ algorithm for ear decompositions of matching-covered graphs},
  author = {de Carvalho, Marcelo H and Cheriyan, Joseph},
  year = 2005,
  month = oct,
  journal = {ACM Transactions on Algorithms},
  volume = {1},
  number = {2},
  pages = {324-337},
  issn = {1549-6325, 1549-6333},
  doi = {10.1145/1103963.1103969},
  urldate = {2023-05-04},
  langid = {english}
}

@article{deCarvalhoKWL2020Birkhoffvon,
  title = {Birkhoff--von {{Neumann Graphs}} That Are {{PM-Compact}}},
  author = {{de Carvalho}, Marcelo H. and Kothari, Nishad and Wang, Xiumei and Lin, Yixun},
  year = 2020,
  month = jan,
  journal = {SIAM Journal on Discrete Mathematics},
  volume = {34},
  number = {3},
  pages = {1769--1790},
  issn = {0895-4801, 1095-7146},
  doi = {10.1137/18M1202347},
  urldate = {2025-06-18},
  langid = {english}
}

@book{Diestel2010Graph,
  title = {Graph Theory},
  author = {Diestel, Reinhard},
  year = 2010,
  series = {Graduate Texts in Mathematics},
  edition = {4th ed},
  number = {173},
  publisher = {Springer},
  address = {Heidelberg ; New York},
  isbn = {978-3-642-14278-9 978-3-642-14279-6},
  langid = {english},
  lccn = {QA166 .D51413 2010}
}

@article{FischerL2001Characterisation,
  title = {A {{Characterisation}} of {{Pfaffian Near Bipartite Graphs}}},
  author = {Fischer, Ilse and Little, Charles H C},
  year = 2001,
  month = jul,
  journal = {Journal of Combinatorial Theory, Series B},
  volume = {82},
  number = {2},
  pages = {175--222},
  issn = {00958956},
  doi = {10.1006/jctb.2000.2025},
  urldate = {2023-05-02},
  langid = {english}
}

@article{GiannopoulouKW2024Excluding,
  title = {Excluding a Planar Matching Minor in Bipartite Graphs},
  author = {Giannopoulou, Archontia C and Kreutzer, Stephan and Wiederrecht, Sebastian},
  year = 2024,
  month = jan,
  journal = {Journal of Combinatorial Theory, Series B},
  volume = {164},
  pages = {161--221},
  issn = {00958956},
  doi = {10.1016/j.jctb.2023.09.003},
  urldate = {2024-03-13},
  langid = {english}
}

@inproceedings{GiannopoulouTW2023Excluding,
  title = {Excluding {{Single-Crossing Matching Minors}} in {{Bipartite Graphs}}},
  booktitle = {Proceedings of the 2023 {{Annual ACM-SIAM Symposium}} on {{Discrete Algorithms}} ({{SODA}})},
  author = {Giannopoulou, Archontia C and Thilikos, Dimitrios M and Wiederrecht, Sebastian},
  year = 2023,
  month = jan,
  pages = {2111--2121},
  publisher = {{Society for Industrial and Applied Mathematics}},
  address = {Philadelphia, PA},
  doi = {10.1137/1.9781611977554.ch81},
  urldate = {2024-03-18},
  langid = {english}
}

@misc{GiannopoulouW2021Two,
  title = {Two {{Disjoint Alternating Paths}} in {{Bipartite Graphs}}},
  author = {Giannopoulou, Archontia C and Wiederrecht, Sebastian},
  year = 2021,
  month = oct,
  number = {arXiv:2110.02013},
  eprint = {2110.02013},
  primaryclass = {cs, math},
  publisher = {arXiv},
  urldate = {2023-05-01},
  archiveprefix = {arXiv}
}

@inproceedings{GiannopoulouW2024Flat,
  title = {A {{Flat Wall Theorem}} for {{Matching Minors}} in {{Bipartite Graphs}}},
  booktitle = {Proceedings of the 56th {{Annual ACM Symposium}} on {{Theory}} of {{Computing}}},
  author = {Giannopoulou, Archontia C. and Wiederrecht, Sebastian},
  year = 2024,
  month = jun,
  pages = {716--727},
  publisher = {ACM},
  address = {Vancouver BC Canada},
  doi = {10.1145/3618260.3649774},
  urldate = {2025-06-18},
  isbn = {979-8-4007-0383-6},
  langid = {english}
}

@phdthesis{Gorsky2024Structure,
  title = {The Structure of (Even) Directed Cycles},
  author = {Gorsky, Maximilian},
  year = 2024,
  month = sep,
  doi = {10.14279/depositonce-21276},
  urldate = {2024-09-17},
  copyright = {Creative Commons Attribution 4.0 International},
  langid = {english},
  school = {Technische Universit\"at Berlin}
}

@inproceedings{GorskyKKW2024Packing,
  title = {Packing {{Even Directed Circuits Quarter-Integrally}}},
  booktitle = {Proceedings of the 56th {{Annual ACM Symposium}} on {{Theory}} of {{Computing}}},
  author = {Gorsky, Maximilian and Kawarabayashi, Ken-ichi and Kreutzer, Stephan and Wiederrecht, Sebastian},
  year = 2024,
  month = jun,
  pages = {692--703},
  publisher = {ACM},
  address = {Vancouver BC Canada},
  doi = {10.1145/3618260.3649682},
  urldate = {2024-06-12},
  isbn = {979-8-4007-0383-6},
  langid = {english}
}

@article{GuoZ2004Reducible,
  title = {Reducible Chains of Planar 1-Cycle Resonant Graphs},
  author = {Guo, Xiaofeng and Zhang, Fuji},
  year = 2004,
  month = jan,
  journal = {Discrete Mathematics},
  volume = {275},
  number = {1-3},
  pages = {151--164},
  issn = {0012365X},
  doi = {10.1016/S0012-365X(03)00102-X},
  urldate = {2025-01-08},
  copyright = {https://www.elsevier.com/tdm/userlicense/1.0/},
  langid = {english}
}

@article{Hendry1990Extending,
  title = {Extending Cycles in Graphs},
  author = {Hendry, George R.T.},
  year = 1990,
  month = nov,
  journal = {Discrete Mathematics},
  volume = {85},
  number = {1},
  pages = {59--72},
  issn = {0012365X},
  doi = {10.1016/0012-365X(90)90163-C},
  urldate = {2025-06-02},
  copyright = {https://www.elsevier.com/tdm/userlicense/1.0/},
  langid = {english}
}

@article{Kasteleyn1963Dimer,
  title = {Dimer {{Statistics}} and {{Phase Transitions}}},
  author = {Kasteleyn, Pieter W},
  year = 1963,
  month = feb,
  journal = {Journal of Mathematical Physics},
  volume = {4},
  number = {2},
  pages = {287--293},
  issn = {0022-2488, 1089-7658},
  doi = {10.1063/1.1703953},
  urldate = {2023-05-03},
  langid = {english}
}

@article{KlavzarS2012Characterization,
  title = {A Characterization of 1-Cycle Resonant Graphs among Bipartite 2-Connected Plane Graphs},
  author = {Klav{\v z}ar, Sandi and Salem, Khaled},
  year = 2012,
  month = may,
  journal = {Discrete Applied Mathematics},
  volume = {160},
  number = {7-8},
  pages = {1277--1280},
  issn = {0166218X},
  doi = {10.1016/j.dam.2011.12.030},
  urldate = {2025-01-08},
  copyright = {https://www.elsevier.com/tdm/userlicense/1.0/},
  langid = {english}
}

@article{Kuratowski1930Probleme,
  title = {Sur Le Probl\`eme Des Courbes Gauches En {{Topologie}}},
  author = {Kuratowski, Casimir},
  year = 1930,
  journal = {Fundamenta Mathematicae},
  volume = {15},
  pages = {271--283},
  issn = {0016-2736, 1730-6329},
  doi = {10.4064/fm-15-1-271-283},
  urldate = {2024-04-29},
  langid = {english}
}

@phdthesis{Kuske2024Characterization,
  type = {Bachelor {{Thesis}}},
  title = {Towards a Characterization of Cycle-Conformal Graphs},
  author = {Kuske, Clemens},
  year = 2024,
  month = mar,
  school = {Technische Universit\"at Berlin}
}

@article{LaPaughP1984Evenpath,
  title = {The Even-Path Problem for Graphs and Digraphs},
  author = {LaPaugh, Andrea S and Papadimitriou, Christos H},
  year = 1984,
  journal = {Networks},
  volume = {14},
  number = {4},
  pages = {507--513},
  issn = {00283045, 10970037},
  doi = {10.1002/net.3230140403},
  urldate = {2023-05-03},
  langid = {english}
}

@article{LiuW2014Note,
  title = {A Note on {{PM-compact}} Bipartite Graphs},
  author = {Liu, Jinfeng and Wang, Xiumei},
  year = 2014,
  journal = {Discussiones Mathematicae Graph Theory},
  volume = {34},
  number = {2},
  pages = {409},
  issn = {1234-3099, 2083-5892},
  doi = {10.7151/dmgt.1706},
  urldate = {2025-06-18},
  langid = {english}
}

@article{Lovasz1987Matching,
  title = {Matching Structure and the Matching Lattice},
  author = {Lov{\'a}sz, L{\'a}szl{\'o}},
  year = 1987,
  month = oct,
  journal = {Journal of Combinatorial Theory, Series B},
  volume = {43},
  number = {2},
  pages = {187--222},
  issn = {00958956},
  doi = {10.1016/0095-8956(87)90021-9},
  urldate = {2023-05-01},
  langid = {english}
}

@book{LovaszP1986Matching,
  title = {Matching {{Theory}}},
  author = {Lov{\'a}sz, L{\'a}szl{\'o} and Plummer, Michael D},
  year = 1986,
  volume = {121},
  publisher = {Elsevier Science Publishers B.V.}
}

@book{LovaszP2009Matching,
  title = {Matching {{Theory}}},
  author = {Lov{\'a}sz, L{\'a}szl{\'o} and Plummer, Michael D},
  year = 2009,
  publisher = {AMS Chelsea Pub},
  address = {Providence, R.I},
  isbn = {978-0-8218-4759-6},
  lccn = {QA164 .L7 2009}
}

@book{LucchesiM2024Perfect,
  title = {Perfect Matchings: A Theory of Matching Covered Graphs},
  shorttitle = {Perfect Matchings},
  author = {Lucchesi, Cl{\'a}udio L. and Murty, U. S. R.},
  year = 2024,
  series = {Algorithms and Computation in Mathematics},
  number = {volume 31},
  publisher = {Springer},
  address = {Cham},
  isbn = {978-3-031-47503-0},
  langid = {english}
}

@article{McCuaig2000Even,
  title = {Even {{Dicycles}}},
  author = {McCuaig, William D},
  year = 2000,
  month = aug,
  journal = {Journal of Graph Theory},
  volume = {35},
  number = {1},
  pages = {46--68},
  doi = {10.1002/1097-0118(200009)35:1<46::AID-JGT4>3.0.CO;2-W}
}

@article{McCuaig2001Brace,
  title = {Brace Generation},
  author = {McCuaig, William D},
  year = 2001,
  month = nov,
  journal = {Journal of Graph Theory},
  volume = {38},
  number = {3},
  pages = {124--169},
  issn = {0364-9024, 1097-0118},
  doi = {10.1002/jgt.1029},
  urldate = {2023-05-01},
  langid = {english}
}

@article{McCuaig2004Polyas,
  title = {P\'olya's {{Permanent Problem}}},
  author = {McCuaig, William D},
  year = 2004,
  month = nov,
  journal = {The Electronic Journal of Combinatorics},
  volume = {11},
  number = {1},
  pages = {R79},
  issn = {1077-8926},
  doi = {10.37236/1832},
  urldate = {2023-05-01}
}

@article{Menger1927Zur,
  title = {Zur Allgemeinen {{Kurventheorie}}},
  author = {Menger, Karl},
  year = 1927,
  journal = {Fundamenta Mathematicae},
  volume = {10},
  number = {1},
  pages = {96--115},
  issn = {0016-2736},
  doi = {10.4064/fm-10-1-96-115}
}

@article{NorineT2007Generating,
  title = {Generating Bricks},
  author = {Norine, Serguei and Thomas, Robin},
  year = 2007,
  month = sep,
  journal = {Journal of Combinatorial Theory, Series B},
  volume = {97},
  number = {5},
  pages = {769--817},
  issn = {00958956},
  doi = {10.1016/j.jctb.2007.01.002},
  urldate = {2025-06-18},
  copyright = {https://www.elsevier.com/tdm/userlicense/1.0/},
  langid = {english}
}

@mastersthesis{Pause2022Planar,
  title = {On Planar Cycle-Extendable Graphs},
  author = {Pause, Kapil R Shenvi},
  year = 2022,
  school = {Chennai Mathematical Institute}
}

@article{PengW2019Ear,
  title = {Ear Decomposition and Induced Even Cycles},
  author = {Peng, Dongmei and Wang, Xiumei},
  year = 2019,
  month = jul,
  journal = {Discrete Applied Mathematics},
  volume = {264},
  pages = {161--166},
  issn = {0166218X},
  doi = {10.1016/j.dam.2019.01.005},
  urldate = {2023-05-04},
  langid = {english}
}

@article{Plummer1980$n$extendable,
  title = {On $n$-extendable graphs},
  author = {Plummer, Michael D},
  year = 1980,
  journal = {Discrete Mathematics},
  volume = {31},
  number = {2},
  pages = {201-210},
  issn = {0012365X},
  doi = {10.1016/0012-365X(80)90037-0},
  urldate = {2023-04-30},
  langid = {english}
}

@inproceedings{Plummer1986Matching,
  title = {Matching {{Extension}} in {{Bipartite Graphs}}},
  booktitle = {Proceedings of the 17th {{Southeastern Conference}} on {{Combinatorics}}, {{Graph Theory}} and {{Computing}}},
  author = {Plummer, Michael D},
  year = 1986,
  pages = {245--258},
  address = {Winnipeg}
}

@article{RobertsonST1999Permanents,
  title = {Permanents, {{Pfaffian Orientations}}, and {{Even Directed Circuits}}},
  author = {Robertson, Neil and Seymour, Paul D and Thomas, Robin},
  year = 1999,
  month = nov,
  journal = {The Annals of Mathematics},
  volume = {150},
  number = {3},
  eprint = {121059},
  eprinttype = {jstor},
  pages = {929--975},
  issn = {0003486X},
  doi = {10.2307/121059},
  urldate = {2023-05-01}
}

@article{Szigeti1998Two,
  title = {The {{Two Ear Theorem}} on {{Matching-Covered Graphs}}},
  author = {Szigeti, Zolt{\'a}n},
  year = 1998,
  month = sep,
  journal = {Journal of Combinatorial Theory, Series B},
  volume = {74},
  number = {1},
  pages = {104--109},
  issn = {00958956},
  doi = {10.1006/jctb.1998.1824},
  urldate = {2025-06-18},
  copyright = {https://www.elsevier.com/tdm/userlicense/1.0/},
  langid = {english}
}

@article{Valiant1979Complexity,
  title = {The {{Complexity}} of {{Enumeration}} and {{Reliability Problems}}},
  author = {Valiant, Leslie G},
  year = 1979,
  month = aug,
  journal = {SIAM Journal on Computing},
  volume = {8},
  number = {3},
  pages = {410--421},
  issn = {0097-5397, 1095-7111},
  doi = {10.1137/0208032},
  urldate = {2023-04-30},
  langid = {english}
}

@article{VaziraniY1989Pfaffian,
  title = {Pfaffian Orientations, 0--1 Permanents, and Even Cycles in Directed Graphs},
  author = {Vazirani, Vijay V and Yannakakis, Milhalis},
  year = 1989,
  month = oct,
  journal = {Discrete Applied Mathematics},
  volume = {25},
  number = {1-2},
  pages = {179--190},
  issn = {0166218X},
  doi = {10.1016/0166-218X(89)90053-X},
  urldate = {2023-05-03},
  langid = {english}
}

@article{WangLCLSL2013Characterization,
  title = {A Characterization of {{PM-compact}} Bipartite and near-Bipartite Graphs},
  author = {Wang, Xiumei and Lin, Yixun and Carvalho, Marcelo H and Lucchesi, Cl{\'a}udio L. and Sanjith, G. and Little, C.H.C.},
  year = 2013,
  month = mar,
  journal = {Discrete Mathematics},
  volume = {313},
  number = {6},
  pages = {772--783},
  issn = {0012365X},
  doi = {10.1016/j.disc.2012.12.015},
  urldate = {2025-06-18},
  copyright = {https://www.elsevier.com/tdm/userlicense/1.0/},
  langid = {english}
}

@article{WangSLC2014Characterization,
  title = {A Characterization of {{PM-compact}} Claw-Free Cubic Graphs},
  author = {Wang, Xiumei and Shang, Weiping and Lin, Yixun and Carvalho, Marcelo H.},
  year = 2014,
  month = jun,
  journal = {Discrete Mathematics, Algorithms and Applications},
  volume = {06},
  number = {02},
  pages = {1450025},
  issn = {1793-8309, 1793-8317},
  doi = {10.1142/S1793830914500256},
  urldate = {2025-06-18},
  langid = {english}
}

@article{WangYL2015Characterization,
  title = {A Characterization of {{PM-compact Hamiltonian}} Bipartite Graphs},
  author = {Wang, Xiu-mei and Yuan, Jin-jiang and Lin, Yi-xun},
  year = 2015,
  month = jun,
  journal = {Acta Mathematicae Applicatae Sinica, English Series},
  volume = {31},
  number = {2},
  pages = {313--324},
  issn = {0168-9673, 1618-3932},
  doi = {10.1007/s10255-015-0475-3},
  urldate = {2023-05-01},
  langid = {english}
}

@article{WangZZ2018Characterization,
  title = {A Characterization of Cycle-Forced Bipartite Graphs},
  author = {Wang, Xiumei and Zhang, Yipei and Zhou, Ju},
  year = 2018,
  month = sep,
  journal = {Discrete Mathematics},
  volume = {341},
  number = {9},
  pages = {2639--2645},
  issn = {0012365X},
  doi = {10.1016/j.disc.2018.06.017},
  urldate = {2023-05-01},
  langid = {english}
}

@article{Whitney1932Nonseparable,
  title = {Non-Separable and Planar Graphs},
  author = {Whitney, Hassler},
  year = 1932,
  journal = {Transactions of the American Mathematical Society},
  volume = {34},
  number = {2},
  pages = {339--362},
  issn = {0002-9947, 1088-6850},
  doi = {10.1090/S0002-9947-1932-1501641-2},
  urldate = {2023-12-19},
  langid = {english}
}

@article{Whitney19332Isomorphic,
  title = {2-{{Isomorphic Graphs}}},
  author = {Whitney, Hassler},
  year = 1933,
  journal = {American Journal of Mathematics},
  volume = {55},
  number = {1},
  eprint = {2371127},
  eprinttype = {jstor},
  pages = {245--254},
  issn = {00029327},
  doi = {10.2307/2371127},
  urldate = {2023-12-19}
}

@phdthesis{Wiederrecht2022Matching,
  type = {Doctoral {{Thesis}}},
  title = {Matching Minors in Bipartite Graphs},
  author = {Wiederrecht, Sebastian},
  year = 2022,
  address = {Berlin},
  urldate = {2023-05-01},
  copyright = {Creative Commons Attribution 4.0 International},
  school = {Technische Universit\"at Berlin}
}

@article{ZhangL2012Computing,
  title = {Computing the Permanental Polynomials of Bipartite Graphs by {{Pfaffian}} Orientation},
  author = {Zhang, Heping and Li, Wei},
  year = 2012,
  month = sep,
  journal = {Discrete Applied Mathematics},
  volume = {160},
  number = {13-14},
  pages = {2069--2074},
  issn = {0166218X},
  doi = {10.1016/j.dam.2012.04.007},
  urldate = {2023-05-01},
  langid = {english}
}

@article{ZhangW2022Characterizations,
  title = {Characterizations of {{Cycle-Forced}} 2-{{Connected Claw-Free Cubic Graphs}}},
  author = {Zhang, Yi-ran and Wang, Xiu-mei},
  year = 2022,
  journal = {Chinese Quarterly Journal of Mathematics},
  volume = {37},
  number = {4},
  pages = {432--440},
  doi = {10.13371/j.cnki.chin.q.j.m.2022.04.012}
}

@article{ZhangWY2020Even,
  title = {Even Cycles and Perfect Matchings in Claw-Free Plane Graphs},
  author = {Zhang, Shanshan and Wang, Xiumei and Yuan, Jinjiang},
  year = 2020,
  journal = {Discrete Mathematics \& Theoretical Computer Science},
  volume = {22},
  number = {4},
  pages = {\#6},
  doi = {10.23638/DMTCS-22-4-6}
}

@article{ZhangWY2022PMcompact,
  title = {{{PM-compact Graphs}} and {{Vertex-deleted Subgraphs}}},
  author = {Zhang, Yi-pei and Wang, Xiu-mei and Yuan, Jin-jiang},
  year = 2022,
  month = oct,
  journal = {Acta Mathematicae Applicatae Sinica, English Series},
  volume = {38},
  number = {4},
  pages = {955--965},
  issn = {0168-9673, 1618-3932},
  doi = {10.1007/s10255-022-1018-3},
  urldate = {2025-06-18},
  langid = {english}
}

@article{ZhangWYNC2022Cyclenice,
  title = {On Cycle-Nice Claw-Free Graphs},
  author = {Zhang, Shanshan and Wang, Xiumei and Yuan, Jinjiang and Ng, C.T. and Cheng, T.C.E.},
  year = 2022,
  month = jul,
  journal = {Discrete Mathematics},
  volume = {345},
  number = {7},
  pages = {112876},
  issn = {0012365X},
  doi = {10.1016/j.disc.2022.112876},
  urldate = {2023-05-01},
  langid = {english}
}

@article{ZhouLFD2022Note,
  title = {A Note on {{PM-compact}} \$ {{K}}\_4 \$-Free Bricks},
  author = {Zhou, Jinqiu and Li, Qunfang and {Faculty of Science, Jiangxi University of Science and Technology, Ganzhou 341000, China} and {Department of Mathematics, Ganzhou Teachers College, Ganzhou 341000, China}},
  year = 2022,
  journal = {AIMS Mathematics},
  volume = {7},
  number = {3},
  pages = {3648--3652},
  issn = {2473-6988},
  doi = {10.3934/math.2022201},
  urldate = {2025-06-18}
}

\end{document}